\long\def\@savemarbox#1#2{\global\setbox#1\vtop{\hsize\marginparwidth 
  \@parboxrestore\tiny\raggedright #2}}
\newcommand\lref[1]{\ref{#1}%
\@ifundefined{r@DisplaY #1}{}{ (#1)}}
\newcommand\fakelabel[2]{\@bsphack\if@filesw {\let\thepage\relax
   \newcommand\protect{\noexpand\noexpand\noexpand}%
\xdef\@gtempa{\write\@auxout{\string
      \newlabel{#1}{{#2}{\thepage}}}}}\@gtempa
   \if@nobreak \ifvmode\nobreak\fi\fi\fi\@esphack}
\def\SL@margintext#1{{\showlabelsetlabel{\tiny\{\SL@prlabelname{#1}\}}}}
\def\Empty{}
\newcommand\oplabel[1]{
  \def\OpArg{#1} \ifx \OpArg\Empty {} \else
        \label{#1}
  \fi}
\newtheorem{theoremSt}{Theorem}[section]
\newtheorem{exampleSt}[theoremSt]{Example}
\newtheorem{exerciseSt}[theoremSt]{Exercise}
\newcommand\MakeStEnv[1]{
  \newenvironment{#1}[1]{
  \begin{#1St} \oplabel{##1}%
  \global\def\CrntSt{\thetheoremSt}%
}{ 
  \end{#1St} }
  \newenvironment{#1+}[1]{
  \begin{#1St} \label{##1}%
  \label{DisplaY ##1}%
  \global\def\CrntSt{\thetheoremSt}%
  \def\Labl{##1}\ifx\Labl\Empty{} \else {\em (\Labl)\,}\fi%
}{ 
  \end{#1St} }
}
\newenvironment{example}[1]{
  \begin{exampleSt} \oplabel{#1}%
  \global\def\CrntSt{\thetheoremSt}%
  \rm %
}{ 
  \end{exampleSt} }
\newlength{\saveu}
\newenvironment{pf*}[1]{%
 \begin{proof}[#1]%
}{ 
 \end{proof}
}
\newcommand{\finishproof}[1]{ 
  \def\FPArg{#1}
  \ifx\FPArg\Empty
        \newcommand\FPArg{\CrntSt}  \fi
  \smallbreak\noindent\makebox[\textwidth]{\hfill\fbox{\FPArg}}
  \medbreak\noindent
}
\newcommand\CC{{\mathcal C}}
\newcommand\FF{{\mathcal F}}
\newcommand\LL{{\mathcal L}}
\newcommand\MM{{\mathcal M}}
\newcommand\PP{{\mathcal P}}
\newcommand\PMF{{\PP\kern-2pt\MM\FF}}
\newcommand\PML{{\PP\kern-2pt\MM\LL}}
\newcommand\ep{\epsilon}
\newcommand\bbR{{\mathord{\text{I\kern-2pt R}}}}        
\newcommand\bbH{{\mathord{\text{I\kern-2pt H}}}}        
\newcommand\C{{\mathbb C}}
\newcommand\Z{{\mathbb Z}}
\newcommand\R{{\mathbb R}}
\newcommand\N{{\mathbb N}}
\newcommand\bigrightarrow[1]{\hbox to #1{\rightarrowfill}}
\newcommand\bigleftarrow[1]{\hbox to #1{\leftarrowfill}}
\newcommand\semidir{\mathrel{\hbox{\vrule depth-.03ex height1.1ex\kern-0.15em$\times$}}}
\numberwithin{equation}{section}
\newcommand{\collar}{\operatorname{\mathbf{collar}}}
\newcommand{\fsubd}{\mathrel{{\scriptstyle\searrow}\kern-1ex^d\kern0.5ex}}
\newcommand{\bsubd}{\mathrel{{\scriptstyle\swarrow}\kern-1.6ex^d\kern0.8ex}}
\newcommand{\fsubeq}{\mathrel{\raise-.7ex\hbox{$\overset{\searrow}{=}$}}}
\newcommand{\bsubeq}{\mathrel{\raise-.7ex\hbox{$\overset{\swarrow}{=}$}}}
\newcommand{\tsh}[1]{\left\{\kern-.9ex\left\{#1\right\}\kern-.9ex\right\}}
\newcommand\MT{{\mathbb T}}
\def\IH{{\Bbb H}}
\def\Ht{{\IH^3}}
\def\IZ{{\Bbb Z}}
\def\IN{{\Bbb N}}
\begin{document}
\title{Convergence and divergence of Kleinian surface groups}
\author[Brock, Bromberg, Canary and Lecuire]{Jeffrey  Brock, Kenneth Bromberg, Richard Canary and Cyril Lecuire}
\thanks{Brock was partially supported by NSF grant DMS-1207572, Bromberg was partially supported by NSF grant 
DMS-1207873,
Canary  was partially
supported by NSF grants DMS-1006298 and DMS -1306992, and Lecuire was partially supported by the
ANR grant GDSOUS and  
Canary and Lecuire were partially supported by the GEAR network
(NSF grants DMS-1107452, 1107263, and 1107367)}

\begin{abstract}
  We characterize sequences of Kleinian surface groups with convergent
  subsequences in terms of the asymptotic behavior of the ending
  invariants of the associated hyperbolic 3-manifolds.  Asymptotic
  behavior of end invariants in a convergent sequence predicts the
  parabolic locus of the algebraic limit as well as how the algebraic
  limit wraps within the geometric limit under the natural locally
  isometric covering map.
\end{abstract}

\maketitle

\section{Introduction}
Central to Thurston's original approach to the hyperbolization theorem
for closed, irreducible, atoroidal 3-manifolds is a collection of
compactness criteria for deformation spaces of hyperbolic
3-manifolds. In the Haken setting, such compactness results gave rise
to iterative solutions to the search for hyperbolic structures on
constituent pieces in a hierarchical decomposition.

Later, the classification of hyperbolic 3-manifolds with finitely
generated fundamental group gave explicit {\em a priori} geometric
control of these manifolds in terms of the combinatorics of the
asymptotic data determining the hyperbolic structure, up to
bi-Lipschitz diffeomorphism.  Sullivan's Rigidity Theorem then allows
for the passage from bi-Lipschitz diffeomorphism to isometry.  The
invariants themselves then become parameters, and the bi-Lipschitz
control they provide gives rise to a new range of interrelations
between geometric and topological features of the resulting manifolds.

The present paper relates these asymptotic invariants explicitly to
compactness criteria, characterizing subsequential convergence
precisely in terms of the invariants' limiting combinatorics vis a vis
the {\em complex of curves}.  In particular, we describe a manner in
which invariants {\em bound projections} to curve complexes of
subsurfaces, a notion that guarantees {\em a priori} bounds for
geodesic lengths in a sequence. Our main theorem is a generalization
of Thurston's Double Limit Theorem (\cite{thurston2,otal-double}),
which provides a criterion to ensure subsequential convergence of a
sequence of Kleinian surface groups, and is a key technical step in
Thurston's hyperbolization theorem for 3-manifolds fibering over the
circle.

\begin{theorem}{main theorem}
Let $S$ be a compact, orientable surface and let $\{\rho_n\}$ be a
sequence in $AH(S)$ with end invariants $\{\nu_n^\pm\}$. Then
$\{\rho_n\}$ has a convergent subsequence if and only if 
there exists a subsequence $\{\rho_j\}$ of $\{\rho_n\}$ such
that $\{\nu_j^\pm\}$ bounds projections.
\end{theorem}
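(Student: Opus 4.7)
The statement is an ``if and only if,'' and I would treat the two directions separately, as they require quite different techniques.

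\emph{Sufficiency ($\Leftarrow$).} This is the deep direction, generalizing Thurston's Double Limit Theorem, and the plan is to convert the combinatorial hypothesis into geometric length bounds and then apply standard algebraic compactness. First, assuming $\{\nu_j^\pm\}$ bounds projections, I would use the hierarchy and model manifold machinery of Masur--Minsky, Minsky, and Brock--Canary--Minsky to extract a collection of simple closed curves on $S$ whose $\rho_j$-lengths are uniformly bounded: the bounded-projection hypothesis gives a resolution of the hierarchy between $\nu_j^-$ and $\nu_j^+$ of uniformly bounded combinatorial complexity away from controlled large-twist regions, and the model manifold theorem then supplies representatives on $S$ of uniformly controlled length whose complementary subsurfaces sit in Margulis tubes in a uniformly understood way. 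Second, realizing these curves by pleated surfaces and controlling their behavior near the thin part, one translates length control for a pants decomposition into a uniform bound on the translation lengths of a generating set of $\rho_j(\pi_1(S))$; the absence of large subsurface projections on any essential subsurface is precisely what prevents arbitrarily large twists from driving the translation lengths to infinity. Finally, once traces of a generating set are uniformly bounded, Jorgensen's algebraic compactness yields the convergent subsequence.

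\emph{Necessity ($\Rightarrow$).} Assume $\rho_n \to \rho_\infty$ in $AH(S)$. I would pass to a subsequence so that the end invariants $\nu_n^\pm$ converge in a suitable topology on each essential subsurface, and then argue by contradiction: if along this subsequence there were an essential subsurface $Y \subset S$ along which the subsurface projection distance between $\nu_n^+$ and $\nu_n^-$ went to infinity while $\boundary Y$ retained bounded $\rho_n$-length, then the comparison of the algebraic limit $\rho_\infty$ with the geometric limit $\hhat\rho$ (in the style of Anderson--Canary, Evans, Brock--Bromberg--Canary--Minsky) would show that this unbounded twisting must be absorbed by wrapping of the algebraic limit inside the geometric limit along an annulus with core $\boundary Y$. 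That wrapping is recorded in the parabolic locus and topological type of the algebraic limit, and therefore by refining the subsequence to record these data one obtains the bounded-projection condition on that subsequence.

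\emph{Main obstacle.} The hardest step is the first half of sufficiency: promoting the purely combinatorial bounded-projection hypothesis to genuine \emph{a priori} geometric length bounds in $N_{\rho_j}$, uniformly in $j$. This requires running the hierarchy machinery in a setting where the ends may be degenerate and the $\nu_j^\pm$ may record arbitrarily complicated laminations, and tracking carefully how the resolution of the hierarchy, the geometry of Margulis tubes, and the pleated-surface realizations interact so that no component can detect unbounded twisting that is not already visible as a large subsurface projection.
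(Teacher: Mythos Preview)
Your sufficiency plan has a genuine gap, and it lies exactly where you locate the ``main obstacle'' but misdiagnose it. The definition of bounding projections allows, via condition~(b)(ii), curves $d$ (the \emph{combinatorial wrapped parabolics}) for which \emph{both} $m(\nu_n^+,d,\mu)$ and $m(\nu_n^-,d,\mu)$ diverge---the annular projections $d_{\collar(d)}(\nu_n^\pm,\mu)$ go to infinity, in a correlated way encoded by a wrapping number $w(d)$ and a sequence $|s_n|\to\infty$. In this situation any transversal $t$ to $d$ has $\ell_{\rho_n}(t)\to\infty$, so there is \emph{no} fixed marking on $S$ with uniformly bounded $\rho_n$-length, and your plan to bound the translation lengths of a generating set directly cannot succeed. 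Your sentence ``the absence of large subsurface projections on any essential subsurface is precisely what prevents arbitrarily large twists'' is simply false under the actual hypothesis.

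The paper's route around this is a Kerckhoff--Thurston style untwisting. One first reduces (Proposition~\ref{equivalence}) from end invariants to bounded-length multicurves $c_n^\pm$ that bound projections. When there are no wrapped parabolics, an iterative argument using Theorems~\ref{kgcc fact} and~\ref{bounded curves near geodesic} produces a fixed pants decomposition $r$ of bounded $\rho_n$-length, and hierarchy/model-manifold estimates then give bounded-length transversals; Thurston's Double Limit Theorem finishes (Proposition~\ref{strong is sufficient}). In the general case one precomposes $\rho_n$ with Dehn-twist powers $\Phi_n^\pm=\prod_i D_{q_i}^{s_n(q_i)w^\pm(q_i)}$ to obtain two auxiliary sequences $\rho_n^\pm$ with no wrapped parabolics; each converges by the previous case, and a short algebraic comparison of $\rho_n^+(t)$ and $\rho_n^-(t)$ for a transversal $t$ to a wrapped curve forces $\{\rho_n(d^{s_n})\}$, and hence $\{\rho_n(t)\}$, to be bounded (Proposition~\ref{sufficient}). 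This untwisting step is the real crux.

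For necessity your outline is in the right spirit but the paper is more direct than a contradiction argument: condition~(a) comes from Theorem~\ref{bounded curves near geodesic}; the identification of which curves $d$ are parabolic in the limit (hence which satisfy~(b)(i) trivially) comes from Minsky's Short Curve Theorem; and the split between unwrapped parabolics (satisfying~(b)(i)) and wrapped ones (satisfying~(b)(ii)) is read off geometrically from how a level surface for the algebraic limit immerses in the geometric limit (Proposition~\ref{limit set machine} and Lemmas~\ref{unwrapped behavior},~\ref{wrapped behavior}), with the wrapping numbers $w(d)$ and sequences $s_n$ extracted explicitly from the meridian of the Margulis tube rather than inferred by contradiction.
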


We also (see Theorem \ref{predictive power}) show that the asymptotic
behavior of the end invariants predicts the curve and lamination
components of the end invariants of the limit and how the algebraic
limit manifold ``wraps'' within a geometric limit.

We briefly describe terms and notation of Theorem~\ref{main
  theorem}.

Recall that $AH(S)$ is the space of (conjugacy classes of)
representations $$\rho:\pi_1(S)\to {\rm PSL}(2,\mathbb C)$$ for which
$\rho$ sends peripheral elements to parabolic elements. The {\em end
  invariants} will be discussed more thoroughly in
Section~\ref{background}, but in the case that $\rho$ is {\em
  quasi-Fuchsian}, its end invariants $\nu^+(\rho)$ and $\nu^-(\rho)$
are a pair of hyperbolic structures in the Teichm\"uller space
$\mathcal{T}(S)$.  In the general setting, each end invariant
$\nu^\pm(\rho)$ is a disjoint union of a multicurve on 
$S$, the {\em  parabolic locus}, with either an {\em ending lamination} or a
complete finite-area hyperbolic structure supported on each
complementary component. A curve $c$ lies in the parabolic  locus of $\nu^+(c)$
if it is an upward-pointing parabolic curve, i.e. $\rho(c)$ is parabolic and, after one
chooses an orientation-preserving identification of $N_\rho=\mathbb H^3/\rho(\pi_1(S))$ with $S\times\R$ in the homotopy
class determined by $\rho$,  the
cusp of $N_\rho$ associated to $c$ lies in $S\times [r,\infty)$ for some $r\in\R$. Similarly, a curve lies
in the parabolic locus of $\nu^-(\rho)$ if and only if it is a downward-pointing parabolic curve.

Given an end invariant $\nu$ for $\rho$ and a curve $d$ in ${\mathcal{C}}(S)$, the
{\em curve complex} of $S$, 
we define the {\em length} $l_\nu(d)$ to be 0 if $d$ is a curve
in $\nu$, to be hyperbolic length $l_\tau(d)$ if $d$ lies in a
subsurface $R$ admitting a complete hyperbolic structure $\tau$
induced by $\rho$, and to be $\infty$ otherwise.  
A collection of non-homotopic essential simple closed curves $\mu$ on $S$ is 
{\em  binding} if  any representative of $\mu$ on $S$ decomposes $S$ into
disks or peripheral annuli.
We call a fixed choice of such a
collection $\mu$ a {\em coarse basepoint} for $\mathcal{C}(S)$.  We
define
$$m(\nu,d,\mu)=\max\left\{\sup_{d\subset\partial Y}d_Y(\nu,\mu),{1\over l_{\nu}(d)}\right\}$$
where the supremum in the first term is taken over all essential
subsurfaces $Y$ with $d$ contained in $\partial Y$, and the {\em
  subsurface projection} $d_Y(\nu,\mu)$ is a measure of the distance
in $\CC(Y)$ between projections $\pi_Y(\nu)$ and $\pi_Y(\mu)$ to
$\CC(Y)$ of $\nu$ and $\mu$ (see sections \ref{curve complexes} and
\ref{end invariants}).

If we take the supremum of $d_Y(\nu,\mu)$ only over non-annular
surface with boundary containing $d$ (i.e. $Y$ is not isotopic to a collar neighborhood $\collar(d)$ of $d$), then we obtain
$$m^{na}(\nu,d,\mu)=\max \left\{ \sup_{\stackrel{d\subset\partial Y,}{\ Y\ne \collar}(d)}d_Y(\nu,\mu),
  \frac{1}{l_{\nu}(d)} \right\}.$$

Choose a coarse basepoint $\mu$ in $\mathcal{C}(S)$ once and for all.
We say that a sequence $\{\nu_n^\pm\}$ of end invariants {\em bounds
  projections} if for some $K>0$ the following conditions hold:
\begin{enumerate}
\item[(a)] 
Every geodesic in $\mathcal{C}(S)$ joining $\pi_S(\nu_n^+)$ to
   $\pi_S(\nu_n^-)$ lies at distance at most $K$ from $\mu$. 
\item[(b)] If
  $d\in\mathcal{C}(S)$ is a curve, then either 
\subitem (i) there
  exists $\beta(d) \in \{ +, -\}$ such that $\{m(\nu_n^\beta,d,\mu)\}$
  is {\em eventually   bounded}, meaning there is $ N\in\IN$ such that 
$$\sup\{  m(\nu_n^\beta,d,\mu), n\geq N\}<\infty,$$ or 
\subitem (ii)
  $\{m^{na}(\nu_n^+,d,\mu)\}$ and $\{m^{na}(\nu_n^-,d,\mu)\}$ are both
  eventually bounded and there exists $w(d)\in \Z$ and a sequence
  $\{s_n\}\subset\Z$ such that $\lim |s_n|=\infty$ and both
 $$\{d_Y(D_Y^{s_nw(d)}(\nu_n^+),\mu)\} \ \ \ \text{and} \ \ \ 
  \{d_Y(D_Y^{s_n(w(d)-1)}(\nu_n^-),\mu)\}$$ are eventually bounded
  when $Y=\collar(d)$ and $D_Y$ is the right Dehn-twist about $Y$.
\end{enumerate}

In this definition, we say that a curve $d$ is a {\em combinatorial parabolic} if $\{m(\nu_n^+,d,\mu)\}$ or $\{m(\nu_n^-,d,\mu)\}$ is not eventually bounded. It is an {\em  upward-pointing combinatorial parabolic} if $\{m(\nu_n^+,d,\mu)\}$ is not eventually bounded  and $\{m(\nu_n^-,d,\mu)\}$ is
eventually bounded. Similarly, we say that a curve $d$ is a {\em  downward-pointing combinatorial
parabolic} if $\{m(\nu_n^-,d,\mu)\}$ is not eventually bounded  and $\{m(\nu_n^+,d,\mu)\}$
is eventually bounded. 
We say that $d$ is a {\em combinatorial wrapped parabolic} if both $\{m(\nu_n^+,d,\mu)\}$ and
$\{m(\nu_n^-,d,\mu)\}$ are unbounded. If $d$ is combinatorial parabolic, then we
we say that $w(d)$ is its {\em combinatorial wrapping number}. We notice that all these definitions
are independent of the choice of coarse basepoint, so we will usually choose our coarse basepoint
to be a complete marking of $S$ (see Section \ref{curve complexes}).

We will see that, for a convergent sequence, every
combinatorial parabolic is indeed associated to a parabolic in the
limit and furthermore that one can determine which
side the parabolic manifests on directly from the asymptotic behavior
of $\{m(\nu_n^+,d,\mu)\}$ and $\{m(\nu_n^-,d,\mu)\}$. Moreover, every
wrapped parabolic is associated to the wrapping of an immersion of a
compact core for $N_\rho$ in a geometric limit of $\{ N_{\rho_n}\}$.

We combine our results with \cite[Theorem 1.3]{BBCM} to see 
that the asymptotic behavior of the end invariants predicts the curve and
lamination components of the end invariants of the limit. 

We also describe, in the case when $N_{\rho_n}$ converges
geometrically to a hyperbolic 3-manifold, how a compact core for the
algebraic limit is ``wrapped'' when pushed down into the geometric
limit. We describe this phenomenon in terms of a wrapping multicurve
and an associated wrapping number (we refer the reader to section
\ref{define wrapping numbers} for definitions).  Anderson and Canary
\cite{AC-pages} first observed that there need not be a compact core
for the algebraic limit that embeds in the geometric limit and
McMullen \cite[Lemma A.4]{mcmullen} gave the first description of this
phenomenon in the surface group case. We show that there is a compact
core for the algebraic limit that embeds in the geometric limit if
and only if the wrapping multicurve is empty.

\begin{theorem}{predictive power}
Suppose that $\{\rho_n\}$ is a sequence in $AH(S)$ converging to
$\rho\in AH(S)$ and $\{\nu_n^{\pm}\}$ bounds projections. Then
\begin{enumerate}
\item
$\ell_\rho(d)=0$ if and only if $d$ is a combinatorial parabolic for the sequence
$\{\nu_n^\pm\}$,
\item
A parabolic curve $d$ is upward-pointing in $N_\rho$ if and only if
$$|m(\nu_n^+,d,\mu)|-|m(\nu_n^-,d,\mu)|\to +\infty.$$
\item A lamination $\lambda\in {\mathcal{EL}}(Y)$ is an ending
  lamination for an upward-pointing (respectively downward-pointing)
  geometrically infinite end for $N_\rho$ if and only if
  $\{\pi_Y(\nu_n^+)\}$ (respectively $\{\pi_Y(\nu_n^-)\}$) converges
  in  ${\mathcal{C}}(Y)\cup{\mathcal{EL}}(Y)$ to $\lambda$.
\item If $\{\rho_n(\pi_1(S))\}$ converges geometrically to
  $\hat\Gamma$, then the wrapping multicurve for
  $(\{\rho_n\},\rho,\hat\Gamma)$ is the collection of combinatorial wrapping parabolics
  given by $\{\nu_n^\pm\}$ and if $d$ is a wrapping parabolic,
  then the combinatorial wrapping number $w(d)$ agree with the actual
  wrapping number $w^+(d)$.
\item There is a compact core for $N_\rho$ that embeds in $\hat
  N={\bf H}^3/\Gamma$ if and only if there are no combinatorial wrapping
  parabolics.
\end{enumerate}
\end{theorem}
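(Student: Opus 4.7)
The plan is to combine Theorem \ref{main theorem} with the Short Curve Theorem and Model Manifold technology (as developed in \cite{BBCM} and the Minsky/Brock--Canary--Minsky program) and with the classical algebraic/geometric limit theory from \cite{AC-pages} and \cite{mcmullen}. Since we are given both algebraic convergence $\rho_n \to \rho$ and the bounded-projections hypothesis, the technical input is available on both sides of the dictionary between combinatorial data on $S$ and geometric data in $N_{\rho_n}$.

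For parts (1) and (2), I would argue via the Short Curve Theorem: a curve $d$ has $\ell_{\rho_n}(d)$ small precisely when $d$ lies in the parabolic locus of some $\nu_n^\pm$, the length $\ell_{\nu_n^\pm}(d)$ is small, or some subsurface projection coefficient with $d \subset \boundary Y$ is large. Condition (a) in \emph{bounds projections} keeps $\pi_S(\nu_n^\pm)$ close to $\mu$, so these coefficients can be rewritten in terms of $m(\nu_n^\pm, d, \mu)$. Therefore $\ell_{\rho_n}(d) \to 0$ if and only if $d$ is a combinatorial parabolic, and passing to the algebraic limit gives (1). For (2), the side of the parabolic is encoded by the side with dominant short-curve data: for purely upward-pointing combinatorial parabolics $\{m(\nu_n^-, d, \mu)\}$ is bounded while $\{m(\nu_n^+, d, \mu)\}$ is not, and for combinatorial wrapping parabolics condition (b)(ii) forces the $+$-side twist coefficient to be $s_n w(d)$ and the $-$-side to be $s_n(w(d)-1)$, whose moduli differ by $|s_n|\to\infty$, pushing the cusp to the $+$ side.

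For part (3), I would invoke Thurston's characterization of ending laminations as limits in $\PML$ of simple closed curves whose geodesic representatives exit the end, together with the model manifold picture in which such exiting curves appear as vertices of hierarchies built from $\nu_n^+$. Since $\pi_Y(\nu_n^+)$ tracks this hierarchy through $\CC(Y)$, algebraic convergence of $\rho_n$ forces $\pi_Y(\nu_n^+)$ to accumulate on the ending lamination of the upward-pointing end of $\rho$ in $\CC(Y)\cup\EL(Y)$; the converse direction runs the same argument in reverse using compactness of the curve-complex boundary.

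Parts (4) and (5) are the deepest. When $d$ is a combinatorial wrapping parabolic, each $N_{\rho_n}$ has a long Margulis tube around the geodesic representative of $d$, and condition (b)(ii) controls the twist parameters on its two sides so that the $+$-side twist exceeds the $-$-side by $s_n$ Dehn twists while each side individually drifts at rate proportional to $w(d)$. In the geometric limit this tube degenerates to a rank-one cusp in $\hat N$; in the algebraic limit it also degenerates, but the covering map from a compact core of $N_\rho$ into $\hat N$ wraps $w(d)$ times around this cusp, as in the Anderson--Canary and McMullen constructions. I would prove (4) by constructing this covering map using the almost-isometric models supplied by \cite{BBCM} and reading off the wrapping number from the asymmetry in twist parameters between the two sides; part (5) is then immediate since a compact core embeds if and only if no wrapping occurs. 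The main obstacle will be matching the combinatorially defined $w(d)$ with the topologically defined $w^+(d)$: this requires careful bookkeeping of how twist parameters in the model manifold translate into the actual geometric turning of the algebraic-limit core around the cusp of $\hat N$, and fixing the homotopy class of the identification $N_\rho \cong S\times\R$ coherently across $n$.
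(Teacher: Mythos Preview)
Your outline for parts (1) and (3) is sound and matches the paper: part (1) is exactly Lemma~\ref{d bounded} (Short Curve Theorem plus Theorem~\ref{kgcc fact}), and part (3) is quoted directly as Theorem~\ref{limitend} from \cite{BBCM}.

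There is a genuine error in your treatment of part (2) for wrapped parabolics. You assert that the cusp is ``pushed to the $+$ side,'' but this is false: a wrapped parabolic can be upward- or downward-pointing depending on the sign of $w(d)$. Concretely, $|s_n w(d)| - |s_n(w(d)-1)|$ equals $|s_n|$ when $w(d)\ge 1$ and $-|s_n|$ when $w(d)\le 0$, so the difference in (2) goes to $+\infty$ or $-\infty$ accordingly. More seriously, for the unwrapped case you have no argument at all linking the geometric notion ``upward-pointing'' to the combinatorial quantity $m(\nu_n^\pm,d,\mu)$. The paper supplies this link via Proposition~\ref{limit set machine}(4): for an unwrapped downward-pointing parabolic $d$, the geodesic representative of any curve crossing $d$ lies \emph{above} $d^*$ in $N_{\rho_n}$ for large $n$, and then Theorem~\ref{top is top} bounds $d_Y(\mu,\nu_n^+)$ for all $Y$ with $d\subset\partial Y$. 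The ordering statement in Proposition~\ref{limit set machine}(4) is itself nontrivial and requires the wrapped-surface construction in the geometric limit; the Short Curve Theorem alone does not see which side a cusp faces.

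For parts (4) and (5) you correctly identify the obstacle (matching the combinatorial $w(d)$ to the topological $w^+(d)$) but do not resolve it, and ``reading off the wrapping number from twist asymmetries in the model'' is not a mechanism. The paper's resolution is concrete and quite different from what you sketch. One first passes to a geometrically convergent subsequence and builds the wrapped level surface $F\subset N_\rho^0$ of Proposition~\ref{limit set machine}, whose projection to $\hat N$ is an embedding off a collar $Q$ of the wrapping multicurve $q$. Lemma~\ref{dehn twist} then computes, for each $q_i$, an integer $s_{i,j}$ such that $\psi_j(m_i+s_{i,j}l_i)$ is the meridian of the Margulis tube in $N_{\rho_j}$ bounded by $\psi_j(T_i)$. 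This yields two auxiliary sequences $\rho_j^\pm=\rho_j\circ(D_q^{w^\pm(q)s_j})_*$ (Lemma~\ref{wrapped behavior}) which converge algebraically with each $q_i$ \emph{unwrapped} and of prescribed orientation. Applying the unwrapped case (Lemma~\ref{unwrapped behavior}) to these auxiliary sequences produces exactly the bounds in condition~(b)(ii) with the correct $w(d)=w^+(q_i)$. Part~(5) is then immediate from Proposition~\ref{limit set machine}(5). Your proposal bypasses the geometric limit and the wrapped surface entirely, and without them there is no bridge between the topological wrapping number (defined via the covering $N_\rho\to\hat N$) and the annular projection data of $\nu_n^\pm$.
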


\medskip

We also obtain the following alternative characterization of
convergence in terms of sequence of bounded length multicurves in
$N_{\rho_n}$.

\begin{theorem}{multi-curve version}
  Let $S$ be a compact, orientable surface and let $\{\rho_n\}$ be a
  sequence in $AH(S)$. Then $\{\rho_n\}$ has a convergent subsequence
  if and only if there exists a subsequence $\{\rho_j\}$ of
  $\{\rho_n\}$ and a sequence $\{c_j^\pm\}$ of pairs of multicurves
  so that $\{\ell_{\rho_j}(c_j^+\cup c_j^-)\}$ is bounded and
  $\{c_j^\pm\}$ bounds projections.
\end{theorem}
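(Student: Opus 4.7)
The plan is to deduce Theorem~\ref{multi-curve version} from Theorem~\ref{main theorem} by transferring the bounds projections condition between the end invariants $\nu_j^\pm$ and suitable bounded-length multicurves $c_j^\pm$. The mediating tool is the standard relationship, via Minsky's short curve theorem and its annular refinement, that a simple closed curve $d$ has length $\ell_{\rho}(d)\le L$ in $N_\rho$ if and only if $m(\nu^+(\rho),d,\mu) + m(\nu^-(\rho),d,\mu)$ is large, together with the complementary fact that any curve of uniformly bounded length has subsurface projections that track those of $\nu^\pm(\rho)$ up to bounded error in any $\mathcal{C}(Y)$ for which it does not contribute a boundary component.

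For the forward direction, assume a subsequence $\{\rho_j\}$ of $\{\rho_n\}$ converges; Theorem~\ref{main theorem} then yields $\{\nu_j^\pm\}$ bounding projections. The strategy is to produce a bounded-length combinatorial proxy $c_j^\pm$ for $\nu_j^\pm$ by including the parabolic locus of $\nu_j^\pm$ (length zero in $N_{\rho_j}$), bounded-length representatives in $N_{\rho_j}$ of short curves for each geometrically finite component of $\nu_j^\pm$ on the conformal boundary, and, for each geometrically infinite end, a simple closed curve of uniformly bounded length that exits the end and projects close to the ending lamination in the relevant curve complex. Such curves exist by standard end-realization arguments, giving $\ell_{\rho_j}(c_j^+\cup c_j^-)$ uniformly bounded, and since their subsurface projections lie within a bounded neighborhood of those of $\nu_j^\pm$, the conditions defining bounded projections for $\{\nu_j^\pm\}$ transfer to $\{c_j^\pm\}$ up to an enlarged constant $K$.

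For the backward direction, suppose $\{c_j^\pm\}$ bounds projections and $\{\ell_{\rho_j}(c_j^+ \cup c_j^-)\}$ is bounded. The plan is to pass to a subsequence along which the combinatorial type of the pair $(c_j^+,c_j^-)$ stabilizes, then argue that $\{\nu_j^\pm\}$ itself bounds projections and apply Theorem~\ref{main theorem}. The bound on $\ell_{\rho_j}(c_j^\pm)$ forces $d_Y(c_j^\pm,\mu)$ and $d_Y(\nu_j^\pm,\mu)$ to differ by a bounded amount whenever no component of $c_j^\pm$ lies in $\partial Y$, and forces any curve appearing as a component of $c_j^\pm$ to have $m(\nu_j^\pm,d,\mu)$ large on the appropriate side. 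Together these imply condition (a) and condition (b)(i) for $\{\nu_j^\pm\}$ in the non-wrapped cases, completing the reduction to Theorem~\ref{main theorem}.

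The main obstacle is the wrapped parabolic case (b)(ii), where both annular projections diverge with a prescribed integer offset $w(d)$. In the forward direction one must choose the curves in $c_j^\pm$ approximating a wrapped parabolic so that $D_{\collar(d)}^{s_j w(d)}(c_j^+)$ and $D_{\collar(d)}^{s_j(w(d)-1)}(c_j^-)$ are jointly bounded; this requires matching the combinatorial wrapping number to the geometric wrapping of the algebraic limit inside the geometric limit, as identified by Theorem~\ref{predictive power}(4), and is controlled by the geometry of the uniformly thin Margulis tube around the nearly-parabolic component. In the backward direction, the same Margulis tube geometry, combined with the length bound on the relevant component of $c_j^\pm$, forces the twist-tracking relationship for $\nu_j^\pm$. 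Making this compatibility quantitative in both directions is the key technical step.
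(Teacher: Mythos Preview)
Your backward direction contains a genuine gap. You assert that ``the bound on $\ell_{\rho_j}(c_j^\pm)$ forces $d_Y(c_j^\pm,\mu)$ and $d_Y(\nu_j^\pm,\mu)$ to differ by a bounded amount whenever no component of $c_j^\pm$ lies in $\partial Y$.'' This is false. A bounded-length curve in $N_{\rho_j}$ has projection lying near the \emph{geodesic} joining $\pi_Y(\nu_j^+)$ to $\pi_Y(\nu_j^-)$ (this is Theorem~\ref{bounded curves near geodesic}), but it can sit anywhere along that geodesic, not specifically near $\pi_Y(\nu_j^+)$. Indeed the paper remarks explicitly (after Proposition~\ref{strong is sufficient}) that a constant filling pair $c^\pm$ always gives bounded-length multicurves bounding projections for a convergent sequence, yet predicts no parabolics or ending laminations at all---so the end invariants $\nu_j^\pm$ can diverge wildly from $c_j^\pm$ in subsurface projections. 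Your reduction to Theorem~\ref{main theorem} therefore does not go through, and the backward direction must be proved directly; the paper does this via Proposition~\ref{sufficient}, which builds a bounded-length pants decomposition with bounded-length transversals and invokes the Double Limit Theorem.

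Your forward direction is salvageable but badly over-engineered. You propose to construct $c_j^\pm$ tracking $\nu_j^\pm$ closely in all subsurface projections; this is essentially the content of Proposition~\ref{equivalence}, which the paper itself describes as containing ``unpleasant technical difficulties'' (handling shared curves, unwrapped versus wrapped combinatorial parabolics, etc.). By contrast, the paper's forward argument is one line: if $\{\rho_j\}$ converges, take any fixed filling pair $c^+,c^-$ and set $c_j^\pm=c^\pm$ for all $j$. Convergence gives bounded length, and a constant filling pair trivially bounds projections (every $m(c^\beta,d,\mu)$ is a fixed finite number). No appeal to Theorem~\ref{main theorem}, Theorem~\ref{predictive power}, or wrapping analysis is needed.
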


When $c$ is a multicurve and $d$ is a curve, we define
$$m(c,d,\mu)=\sup_{d\subset\partial Y}d_Y(c,\mu)$$
if $i(c,d)\ne 0$ and $m(c,d,\mu)=\infty$ otherwise.  Similarly, we
define $$m^{na}(c,d,\mu)= \sup_{\stackrel{d\subset\partial Y,}{\ Y\ne
    \collar(d)}}d_Y(c,\mu).$$

In analogy with the end invariants situation, 
we say that a sequence $\{ c_n^\pm\}$ of pairs of multicurves {\em
  bounds projections} if, 
choosing a coarse basepoint $\mu$ in $\mathcal{C}(S)$,
the following conditions hold:
\begin{enumerate}
\item[(a)] every geodesic joining $\pi_S(c_n^+)$ to $\pi_S(c_n^-)$
lies a bounded distance from $\mu$ in $\mathcal{C}(S)$,
\item[(b)] if $d\in\mathcal{C}(S)$ is a curve, then either \subitem (i) there
  exists $\beta(d)$ such that $\{m(c_n^\beta,d,\mu)\}$ is eventually
  bounded, or \subitem (ii) $\{m^{na}(c_n^+,d,\mu)\}$ and
  $\{m^{na}(c_n^-,d,\mu)\}$ are both eventually bounded and there
  exists $w(d)\in \Z$ and a sequence $\{s_n\}\subset\Z$ such that
  $\lim |s_n|=\infty$ and both 
$$\{d_Y(D_Y^{s_nw(d)}(c_n^+),\mu)\} \ \ \ \text{and} \ \ \ 
\{d_Y(D_Y^{s_n(w(d)-1)}(c_n^-),\mu)\}$$ are eventually bounded when
  $Y=\collar(d)$ and $D_Y$ is the right Dehn-twist about $Y$.
\end{enumerate}

\medskip 

We again say that a curve $d$ is an {\em  upward-pointing combinatorial
parabolic} if $\{m(c_n^+,d,\mu)\}$ is not eventually bounded  and $\{m(c_n^-,d,\mu)\}$ is
eventually bounded. Similarly, we say that a curve $d$ is a {\em  downward-pointing combinatorial
parabolic} if $\{m(c_n^-,d,\mu)\}$ is not eventually bounded  and $\{m(c_n^+,d,\mu)\}$
is eventually bounded. We say that $d$ is a {\em combinatorial wrapped parabolic}
if both $\{m(c_n^+,d,\mu)\}$ and $\{m(c_n^-,d,\mu)\}$ are
unbounded. However, unlike in the end invariant case, the bounded
length multicurves bounding projections need not predict the ending
laminations or the parabolics in the algebraic limit.  For example, if
$\{\rho_n\}$ is a convergent sequence, then any constant sequence
$\{c_n^\pm\}=\{c^\pm\}$ of pairs of filling multicurves will bound projections.  
We will discuss this issue further in section \ref{suff}.

\bigskip
\noindent{\bf Hausdorff limits of end invariants.}
We note that Theorems 3--6  and 12 of Ohshika \cite{ohshika-divergence}, which
discuss matters of convergence  and divergence of Kleinian groups in the context of
convergence of end invariants in the measure and Hausdorff topology on
laminations, are special cases of Theorems \ref{main theorem} and
\ref{predictive power}.  The failure of any
of these more traditional forms of convergence of laminations to predict
completely the end invariant of the limit, and in turn the presence of a
convergent subsequence, is an essential point of the present
discussion. 
The following examples motivate the need for the 
use of {\em  subsurface projections} to capture convergence phenomena, both here and in \cite{BBCM}.

\begin{example}{non convergence example}
We use a variation of a construction of Brock \cite[Theorem 7.1]{brock-invariants} to produce sequences $\{\rho_n^1\}$ and $\{\rho_n^2\}$ 
in $AH(S)$, so that the ending invariants of $\{\rho_n^1\}$ and $\{\rho_n^2\}$ have the same Hausdorff limit
and  $\{\rho_n^1\}$ and $\{\rho_n^2\}$ have convergent subsequences with algebraic limits whose parabolic loci 
differ. We further construct sequences
$\{\rho^3_n\}$  and $\{\rho_n^4\}$  in $AH(S)$ so that the ending invariants of $\{\rho_n^3\}$ and $\{\rho_n^4\}$
have the same Hausdorff limit, and $\{\rho_n^3\}$ has a  convergent subsequence, but
$\{\rho_n^4\}$  does not have a convergent subsequence.

We first choose a non-separating curve $\alpha$ on $S$ and a mapping class $\psi$ which restricts to 
a pseudo-Anosov diffeomorphism of $S-\collar(\alpha)$. We then choose a non peripheral curve $\gamma$ in
\hbox{$S-\collar(\alpha)$} and a pants decomposition $c_0^1$ of $S$, such that all curves in $c_0^1$ cross $\alpha$.
Let \hbox{$c_n^1= D_\gamma^n \circ \psi^n (c_0^1)$} where $D_\gamma$ is a
Dehn-twist about $\gamma$.  Adjusting if necessary by  Dehn twists
$D_\alpha^{k_n}$ for suitable powers $k_n$, the multicurves $\{c_n^1\}$
converge to a Hausdorff limit $\lambda_H$ which contains $\gamma$ and intersects $\alpha$ transversely.
The lamination $\lambda_H$ spirals about $\gamma$  and gives a decomposition of $S\setminus\gamma$ into ideal polygons.
One can check that $\{m(c_n^1,d,\mu)\}$ is bounded if $d$ is not either $\alpha$ or  $\gamma$, and
that $m^{na}(c_n^1,\alpha,\mu)\to\infty$ and  $m(c_n^1,\gamma,\mu)\to\infty$.

Since $\lambda_H$ is a limit of multicurves and gives a decomposition of \hbox{$S\setminus\alpha$} into ideal polygons, 
one can find a pants decomposition $c_0^2$ of $S$ such that \hbox{$\{c_n^2=D_\gamma^n(c_0^2)\}$}
converges to  $\lambda_H$.
One can check that $\{m(c_n^2,d,\mu)\}$ is bounded if $d$ is not $\gamma$ and that $m(c_n^2,\gamma,\mu)\to\infty$.

Let $a$ be a pants decomposition of $S$ which crosses both $\alpha$ and $\gamma$. Let $\rho_n^1$ have
top ending invariant $c_n^1$ and bottom end invariant $a$, while $\rho_n^2$ has top end invariant $c_n^2$
and bottom end invariant $a$. The Hausdorff limit of the top ending invariants of both $\{\rho_n^1\}$ and $\{\rho_n^2\}$
is $\lambda_H$, while the Hausdorff limit of the bottom ending invariants of each sequence is $a$. Theorem
\ref{main theorem} implies that both $\{\rho_n^1\}$ and $\{\rho_n^2\}$ have convergent subsequences.
Theorem \ref{predictive power} implies that if $\rho^1_\infty$ is the algebraic limit of any convergent
subsequence of $\{\rho_n^1\}$, then the upward-pointing parabolic locus of $\rho^1_\infty$ is $\alpha\cup\gamma$,
while the downward-pointing parabolic locus is $a$. On the other hand, if $\rho^2_\infty$ is the algebraic limit of 
any convergent subsequence of $\{\rho_n^2\}$, then the upward-pointing parabolic locus of $\rho^2_\infty$ is 
$\gamma$, while the downward-pointing parabolic locus is $a$.

Let $b$ be a pants decomposition of $S$ which crosses $\gamma$ and contains $\alpha$. Let $\rho_n^3$ have
top ending invariant $c_n^2$ and bottom end invariant $b$, while $\rho_n^4$ has top ending invariant $c_n^1$
and bottom end invariant $b$. The Hausdorff limit of the top ending invariants of both $\{\rho_n^3\}$ and $\{\rho_n^4\}$
is $\lambda_H$, while the Hausdorff limit of the bottom end invariants of each sequence is $b$. Theorem
\ref{main theorem} implies that $\{\rho_n^3\}$ has a convergent subsequence, but that $\{\rho_n^4\}$ does not
have a convergent subsequence.
\end{example}

\begin{example}{}
If one regards the Hausdorff limit of the end invariants of a sequence of quasifuchsian groups as the Hausdorff 
limit of a sequence of minimal length pants decompositions in the associated conformal structures, 
as Ohshika \cite{ohshika-divergence} does, then one
may use the wrapping construction to construct simpler examples.

Let $\alpha$ be a non-peripheral curve on $S$. Let $X$ be a hyperbolic surface with unique minimal length
pants decomposition $r$ which crosses $\alpha$.
Let $\tau_n^1$ be a quasifuchsian group with top end invariant $D_\alpha^{3n}(X)$
and bottom end invariant $D_\alpha^{2n}(X)$. The Hausdorff limit of the top and bottom end invariants of $\{\tau_n^1\}$
is the lamination $\lambda$ obtained by ``spinning'' $r$ about $\alpha$. Theorem \ref{main theorem}
implies that $\{\tau_n^1\}$ has a convergent subsequence, while Theorem \ref{predictive power} implies that
if $\tau^1_\infty$ is the algebraic limit of any convergent subsequence of $\{\tau_n^1\}$, then the upward-pointing
parabolic locus of $\tau^1_\infty$ is $\alpha$, while the downward pointing parabolic locus is empty.

Let $\tau_n^2$ be a quasifuchsian group with top end invariant $D_\alpha^{n}(X)$
and bottom end invariant $D_\alpha^{2n}(X)$. The Hausdorff limit of the top and bottom end invariants of $\{\tau_n^2\}$
is again $\lambda$. Theorem \ref{main theorem}
implies that $\{\tau_n^2\}$ has a convergent subsequence, while Theorem \ref{predictive power} implies that
if $\tau^2_\infty$ is the algebraic limit of any convergent subsequence of $\{\tau_n^2\}$, then the upward-pointing
parabolic locus of $\tau^2_\infty$ is empty, while the downward pointing parabolic locus is $\alpha$.

Let $\tau_n^3$ be a quasifuchsian group with top end invariant $D_\alpha^{2n}(X)$
and bottom end invariant $D_\alpha^{2n}(X)$. The Hausdorff limit of the top and bottom end invariants of $\{\tau_n^3\}$
is again $\lambda$. Theorem \ref{main theorem}
implies that $\{\tau_n^3\}$ has no convergent subsequences.
\end{example}

\bigskip\noindent{\bf Outline of the paper:} In section
\ref{background} we recall definitions and previous results that will
be used in the paper.  In section \ref{wrap1} we define the wrapping
multicurve and the wrapping numbers. We assume that $\{\rho_n\}$
converges to $\rho$ and that $\{N_{\rho_n}\}$ converges geometrically
to $\hat N$. Let $\pi:N_\rho\to \hat N$ be the obvious covering map.
We first find a level surface $F$ in $N_\rho$ and a collection $Q$ of
incompressible annuli in $F$, so that $\pi|_F$ is an immersion,
$\pi|_{F-Q}$ is an embedding and $\pi$ wraps $Q$ around the boundary
of a cusp region in $\hat N$. The collection $q$ of core curves of
elements of $Q$ is the wrapping multicurve. The wrapping number then
records ``how many times'' $Q$ is wrapped around the cusp region.

In section \ref{nec}, we prove that if a sequence $\{\rho_n\}\subset
AH(S)$ converges, then some subsequence of its end invariants predicts
convergence. We also establish Theorem \ref{predictive power}.  We
first use work of Minsky \cite{minsky:kgcc,ELC1} and
Brock-Bromberg-Canary-Minsky \cite{BBCM} to establish the results in
the case that the wrapping multicurve is empty. When the wrapping
multicurve is non-empty, we use the wrapped surface $F$ from section
\ref{wrapping} to construct two new sequences, that differ from the
original sequence by powers of Dehn twists in components of the
wrapping multicurve, but themselves have empty wrapping
multicurves. We can then apply the results from the empty wrapping
multicurve case to both of these sequences. Analyzing the
relationship between the end invariants of the original sequence and
the two new sequences allow us to complete the proof.

In section \ref{equiv}, we show that if the sequence $\{\nu_n^\pm\}$
of end invariants for a sequence $\{\rho_n\}$ in $AH(S)$ bounds projections,
then one can find a subsequence $\{\rho_j\}$ and a sequence
$\{c_j^\pm\}$ of pairs of multicurves such that
$\{\ell_{\rho_j}(c_j^+\cup c_j^-)\}$ is bounded and $\{c_j^\pm\}$
bounds projections. The difficulty comes from the fact that one must insure that $c_n^+$ and $c_n^-$ do not
share any curves while bounding projections. In particular one must take special care of the curves
where $\{m(\nu_n^\beta,d,\mu)\}$ is unbounded.
To overcome these difficulties, we will construct $ c_n^\pm$ as minimal length pants decompositions under some constraints.

In section \ref{suff}, we show that if $\{\rho_n\}$ is a sequence in
$AH(S)$ and there is a sequence of bounded length multicurves 
$\{c_n^\pm\}$ that bound projections, then $\{\rho_n\}$ has a
convergent subsequence. Again we start with the case that the wrapping
multicurve is empty.  We may assume that each $c_n^\pm$ is a pants
decomposition of $S$.  We first use results of Minsky
\cite{minsky:kgcc} to find a pants decomposition $r$ such that
$\{\ell_{\rho_n}(r)\}$ is bounded.  We then construct the model
manifold $M_n^\beta$ associated to the hierarchy joining $r$ to
$c_n^\beta$ and observe, using work of Bowditch \cite{bowditch} and
Minsky \cite{ELC1}, that there is a uniformly Lipschitz map of
$M_n^\beta$ into $N_{\rho_n}$.  (If $r$ and $c_n^\beta$ share curves
we consider a model manifold associated to a subsurface of $S$.)  We
find a bounded length transversal in $M_n$ to each curve in $r$ and
then observe that it also has bounded length in $N_{\rho_n}$. We pass
to a subsequence so that the sequence of transversals we have
constructed is constant and then simply apply the Double Limit Theorem
to conclude that there is a convergent subsequence. When the wrapping
multicurve is not empty, we construct two new sequences with empty
wrapping multicurves and use them to produce a converging subsequence
of the original sequence.

Finally, in section \ref{conc} we combine the results of sections
\ref{nec}, \ref{equiv} and \ref{suff} to complete the proofs of both
Theorems \ref{main theorem} and \ref{multi-curve version}.

\section{Background}
\label{background}

In this section, we collect definitions and previous results which will be used in the paper.
We first need to recall the definitions of curve complexes of subsurfaces, subsurface projections, markings and end invariants.

\subsection{Curve complexes, markings and subsurface projections}	\label{curve complexes}

If $W$ is an essential non-annular subsurface of $S$, its curve complex ${\mathcal{C}}(W)$ 
is a locally infinite simplicial complex whose vertices are isotopy classes of essential non-peripheral curves
on $W$. Two vertices are joined by an edge if and only if the associated curves intersect minimally. A collection
of $n+1$ vertices span a $n$-simplex if the corresponding curves have mutally disjoint representatives. Masur
and Minsky \cite{masur-minsky} proved that ${\mathcal{C}}(W)$ is Gromov hyperbolic with respect to its
natural path metric.

We will assume throughout that all curves are essential and non-peripheral. A {\em multicurve}
will be a collection of disjoint curves, no two of which are homotopic. A {\em pants decomposition} of $W$
is a maximal multicurve.

Klarreich \cite{klarreich}, see also Hamenstadt \cite{hamenstadt}, showed that the 
Gromov boundary $\partial_\infty{\mathcal{C}}(W)$
of ${\mathcal{C}}(W)$ can be naturally identified with the space ${\mathcal{EL}}(W)$ of filling geodesic
laminations on $W$.

A {\em marking} $\mu$ on $S$ is a multicurve ${\rm base}(\mu)$
together with a selection of transversal curves, at most one for each
component of ${\rm base}(\mu)$. A transversal curve to a curve $c$ in
${\rm base}(\mu)$ intersects $c$ and is disjoint from ${\rm
  base}(\mu)-c$. A marking is {\em complete} if ${\rm base}(\mu)$ is a
pants decomposition and every curve in ${\rm base}(\mu)$ has a
transversal. A {\em generalized marking} is a collection of filling
laminations on a disjoint collection of subsurfaces together with the
boundary of those subsurfaces and a marking of their complement.  (See
Masur-Minsky \cite{masur-minsky2} and Minsky \cite{ELC1} for a more
careful discussion of markings and generalized markings.)

If $W$ is an essential non-annular subsurface, one may define a {\em subsurface projection}
$$\pi_W:{\mathcal{C}}(S)\to {\mathcal{C}}(W)\cup \{\emptyset\}.$$
If $c\in{\mathcal{C}}(S)$ and $c$ is disjoint from
$W$, then $\pi_W(c)=\emptyset$. If not, $c\cap W$ is a collection of arcs and curves on $W$. Each arc in
$c\cap W$ may be surgered to produce an essential curve on $W$ by
adding arcs in $\partial W$.  
We let $\pi_W(c)$ denote a choice of one of the resulting
essential curves in $W$; then $\pi_W(c)$ is {\em coarsely}
well-defined - any two choices lie at bounded distance (see \cite[Lemma 2.3]{masur-minsky2}).
For a subset $\mu$ of ${\mathcal C}(S)$ (such as a multicurve, a marking or a coarse basepoint for ${\mathcal{C}(S)}$), 
we choose $\pi_W(\mu)$ to be a curve in $\bigcup_{c\in\mu}\pi_W(c)$ if there is one and to be $\emptyset$ otherwise. 
We can then define
$$d_W(c,\mu)=d_{{\mathcal{C}}(W)}(\pi_W(c),\pi_W(\mu))$$
if $\pi_W(c)\neq\emptyset$ and $\pi_W(\mu)\neq\emptyset$, and define
$d_W(c,\mu)=+\infty$ otherwise.
 
If $\mu$ is a generalized marking on $S$, 
then we define 
$$\pi_W(\mu)\in{\mathcal{C}}(W)\cup{\mathcal{EL}}(W)\cup\emptyset$$ 
by 
\begin{enumerate} 
\item letting $\pi_W(\mu) = \emptyset$ if $\mu$ does not intersect $W$, 
\item letting $\pi_W(\mu) = \lambda$ if $\lambda \subset \mu$ lies in
$\mathcal{EL}(W)$, 
\item constructing $\pi_W(\mu)$ as above using any simple closed curve
  or proper arc in $\mu \cap W$.
\end{enumerate}

For a pair of generalized markings, we define
$$d_W(\mu,\mu')=d_{{\mathcal{C}}(W)}(\pi_W(\mu),\pi_W(\mu'))$$
if $\pi_W(\mu),\pi_W(\mu')\in{\mathcal{C}}(W)$
and $d_W(\mu',\mu)=\infty$ if $\pi_W(\mu)$ or $\pi_W(\mu')$ lies in \hbox{${\mathcal{EL}}(W)\cup\{\emptyset\}$}.

If $W$ is an essential annulus in $S$ we may also define $d_W(c,d)$ and $d_W(c,\mu)$. The simplest way to do this is
to first fix a hyperbolic metric on $S$ and let $\tilde S$ be the annular cover  $S$ so that $W$ lifts to a compact core for 
$\tilde S$. We then
compactify $\tilde S$ by its ideal boundary to obtain an annulus $A$ and define a complex ${\mathcal{C}}(W)$ whose
vertices are geodesics in $A$ that joins the two boundary components of $A$. 
We join two vertices if they have disjoint representatives. 
If we give ${\mathcal{C}}(W)$ the natural path metric then $d_{\CC(W)}(a,b) =i(a,b)+1$ and it follows that $\CC(W)$ is quasi-isometric to $\Z$. 
Given a simple closed curve $c\subset S$, we realize it as geodesic and then consider its pre-image in $\tilde S$. 
If $c$ intersects $W$ essentially, the pre-image contains an essential arc $\tilde c$ whose closure joins the two boundary components of $A$, 
we set $\pi_W(c)=\tilde c$ and we set $\pi_W(c)=\emptyset$ otherwise. 
For a subset $\mu$ of ${\mathcal C}(S)$,
we  again choose $\pi_W(\mu)$ to be an element of $\bigcup_{c\in\mu}\pi_W(c)$ if there is one and to 
be $\emptyset$ otherwise.  We can then define
$$d_W(c,\mu)=d_{{\mathcal{C}}(W)}(\pi_W(c),\pi_W(\mu))$$
if $\pi_W(c)\neq\emptyset$ and $\pi_W(\mu)\neq\emptyset$, and define $d_W(c,\mu)=+\infty$ otherwise. One can check that this definition is independent of the choice of metric.
(Again see Masur-Minsky \cite{masur-minsky2} and Minsky \cite{ELC1} for a complete discussion of subsurface projections
and the resulting distances.)

In all cases, the distance between two curves (or markings) is bounded above by a function of their
intersection number.

\begin{lemma}{intersection bounds distance}
{\rm (\cite[Lemma 2.1]{masur-minsky})}
If $S$ is a compact orientable surface, $\alpha, \beta$ are multicurves or markings on $S$ and $W$ is an essential subsurface of $S$,
then 
$$d_W(\alpha,\beta)\le 2i(\alpha,\beta)+1.$$
\end{lemma}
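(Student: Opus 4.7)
The plan is to treat the two cases $W$ non-annular and $W$ annular separately, in each case first reducing to the situation in which $\alpha$ and $\beta$ are single simple closed curves or proper arcs. Indeed, $\pi_W(\alpha)$ is by definition constructed from some curve or arc $a\subset \alpha\cap W$ (either an essential component of $\alpha\cap W$, or the result of boundary-surgering such an arc), and similarly $\pi_W(\beta)$ comes from some $b\subset\beta\cap W$; since $i(a,b)\le i(\alpha,\beta)$, it is enough to prove $d_W(a,b)\le 2i(a,b)+1$ for simple curves or arcs $a$, $b$.

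For $W$ non-annular I would argue by induction on $n=i(a,b)$. The base case $n=0$ is essentially the definition: disjoint (or equal) arcs/curves of $a\cap W$ and $b\cap W$, after the standard surgery of arcs along $\partial W$, produce essential curves $\pi_W(a)$ and $\pi_W(b)$ that can be chosen disjoint, giving $d_W(a,b)\le 1$. For $n\ge 1$, pick an intersection point $p\in a\cap b$ lying in $W$ and resolve it by a surgery along a short arc on $a$ running from $p$ to the next intersection with $b$; this produces a new simple curve or arc $c$ on $W$ with $i(c,a)\le 1$ (in fact disjoint from $a$ away from an arbitrarily small neighborhood of $p$) and $i(c,b)\le n-1$. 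Applying the base case to $(a,c)$ and induction to $(c,b)$, one obtains
\[
d_W(a,b)\le d_W(a,c)+d_W(c,b)\le 1+\bigl(2(n-1)+1\bigr)=2n\le 2i(a,b)+1.
\]

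For $W$ an annulus, recall the construction of $\CC(W)$ from the annular cover $\tilde S\to S$ compactified to $A$, where distance is computed by $d_{\CC(W)}(\tilde a,\tilde b)=i(\tilde a,\tilde b)+1$ on essential lifts. Assuming $a$ and $b$ are not homotopic to the core of $W$ (otherwise the projection is empty and there is nothing to prove), each essential lift $\tilde a$ is a line covering $a$, and a non-trivial deck transformation cannot fix $\tilde a$ or $\tilde b$. Consequently the projection map $\tilde a\cap\tilde b\to a\cap b$ is injective, so $i(\tilde a,\tilde b)\le i(a,b)$ and hence $d_W(a,b)\le i(a,b)+1\le 2i(a,b)+1$. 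The main technical issue is in the non-annular surgery step: one must ensure the curve $c$ produced is simple, essential and non-peripheral in $W$, which requires a careful choice of the resolving arc (innermost with respect to $a\cap b$) and separate handling of the degenerate cases where naive surgery yields a peripheral or inessential curve; in those degenerate cases one checks directly that $a$ and $b$ already have uniformly bounded $d_W$-distance, so the induction closes.
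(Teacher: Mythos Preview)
The paper does not give its own proof of this lemma; it simply quotes it as \cite[Lemma 2.1]{masur-minsky}. So there is nothing in the paper to compare your argument against, and what you have written is essentially the original Masur--Minsky surgery argument together with the standard annular-cover computation.

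A couple of small points worth tightening. In the non-annular induction, your surgered curve $c$ is in fact disjoint from $a$ (you say this parenthetically), so you should state $i(c,a)=0$ outright rather than $i(c,a)\le 1$; otherwise the appeal to the ``base case'' for the pair $(a,c)$ is not literally the $n=0$ case. Also, the reduction in your first paragraph is slightly delicate: $\pi_W(\alpha)$ is a \emph{curve} obtained by surgering an arc $a$ of $\alpha\cap W$ along $\partial W$, so the intersection number you want to control is really $i(\pi_W(\alpha),\pi_W(\beta))$, not $i(a,b)$. The cleanest way around this is to run the induction in the arc-and-curve complex $\mathcal{AC}(W)$, where arcs are themselves vertices and every arc is at distance $\le 1$ from its surgered curve; then the bound transfers to $\CC(W)$ with at most an additive $2$, which is absorbed into the coarse inequality. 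Your closing caveat about essentiality and non-peripherality of $c$ is exactly the point that needs care in the induction, and you are right that the innermost-arc choice handles it.
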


The following estimate is often useful in establishing relationships
between subsurface projections. Behrstock (\cite[Theorem 4.3]{behrstock}) first gave a version with inexplicit
constants which depends on the surface $S$. We will use a version, due to Leininger, with explicit 
universal constants.

\begin{lemma}{inequality on triples}
{\rm (\cite[Lemma 2.13]{johanna})}
Given a compact surface $S$, two essential subsurfaces $Y$ and $Z$
which overlap and a generalized marking $\mu$ which intersects both $Y$ and $Z$, then
$$d_Y(\mu,\partial Z)\geq 10\Longrightarrow d_Z(\mu,\partial Y)\leq 4$$
\end{lemma}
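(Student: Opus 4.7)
The plan is to establish Lemma \ref{inequality on triples} by a direct surgery argument: assuming $d_Y(\mu,\partial Z)\ge 10$, we produce from $\mu$ an arc in $Z$ whose surgery against $\partial Z$ yields a curve uniformly close, in $\CC(Z)$, to both $\pi_Z(\mu)$ and $\pi_Z(\partial Y)$. The triangle inequality will then deliver the claimed bound of $4$.

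First I would fix a hyperbolic metric on $S$, realize $\mu$, $\partial Y$, and $\partial Z$ as geodesics, and choose a component or leaf $\alpha$ of $\mu$ whose restriction to $Y$ realizes $\pi_Y(\mu)$; by definition of the projection, $d_Y(\alpha,\partial Z)$ and $d_Y(\mu,\partial Z)$ differ by a universal additive constant. Because two vertex sets of $\CC(Y)$ at distance at least three must fill $Y$, the hypothesis forces $\alpha\cap Y$ and $\partial Z\cap Y$ to fill $Y$, and in particular $\alpha$ must cross $\partial Z$ transversely. I would then select a proper subarc $\beta\subset\alpha\cap Z$ with endpoints on $\partial Z$; since $\pi_Z(\mu)$ is by definition chosen among the surgery components of arcs of $\mu\cap Z$ and $\beta$ is a component of $\alpha\cap Z$, the distance $d_Z(\beta,\mu)$ is bounded by a small universal constant.

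The next step is to show that $d_Z(\beta,\partial Y)$ is also bounded by a small universal constant. The key geometric input is that both endpoints of $\beta$ lie on arcs of $\partial Z\cap Y$, whose endpoints lie on $\partial Y$; these endpoints are simultaneously the endpoints of components of $\partial Y\cap Z$. Sliding each endpoint of $\beta$ along its host arc of $\partial Z\cap Y$ to a point of $\partial Y$, and then pushing the resulting path slightly off $\partial Z$ back into $Z$, produces a proper arc $\beta'$ of $Z$ that is disjoint from two specified components of $\partial Y\cap Z$. Surgering $\beta'$ against $\partial Z$ then yields an essential curve in $Z$ disjoint from those components, placing $\pi_Z(\beta)$ within a small distance of $\pi_Z(\partial Y)$.

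Assembling the two bounds via the triangle inequality gives $d_Z(\mu,\partial Y)\le 4$. The hard part will be the careful accounting of additive constants at each step: the ambiguity in choosing the representative of $\pi_Y(\mu)$, the selection of $\beta$ from among possibly many subarcs of $\alpha\cap Z$, the slide-and-push of endpoints, and the final surgery against $\partial Z$ each contribute a small additive error in $\CC(Z)$. Matching the sharp constant $4$ under the hypothesis $\ge 10$ requires arranging these contributions with care. The generalized-marking case, in which $\mu$ may contain ending laminations on proper subsurfaces, is handled identically by replacing $\alpha$ with a leaf of the appropriate lamination component, since only transverse intersection data is used throughout.
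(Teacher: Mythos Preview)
The paper does not prove this lemma; it merely cites it from Mangahas \cite[Lemma 2.13]{johanna}, where the argument is attributed to Leininger as an explicit-constant refinement of Behrstock's original inequality. So there is no in-paper proof to compare your proposal against.

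Your overall strategy is the standard one and is essentially correct, but there is a genuine gap in the execution. You select $\beta$ as an arbitrary component of $\alpha\cap Z$ and then assert that ``both endpoints of $\beta$ lie on arcs of $\partial Z\cap Y$.'' This is not justified: a generic component of $\alpha\cap Z$ may have one or both endpoints on $\partial Z$ outside $Y$, and then your slide-along-$\partial Z$ maneuver has nowhere to go. The clean fix, which also eliminates the slide-and-push step entirely, is to first choose an arc $a$ of $\alpha\cap Y$; since $d_Y(a,\partial Z)$ is large, $a$ must cross $\partial Z$ inside $Y$, and any subarc $\beta$ of $a$ with endpoints on $\partial Z$ and interior in $Z$ then lies in $Y\cap Z$. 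Such a $\beta$ is automatically disjoint from $\partial Y\cap Z$, so $\pi_Z(\beta)$ is within distance $2$ of $\pi_Z(\partial Y)$ and within distance $2$ of $\pi_Z(\mu)$, giving the bound directly.

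Two further points. First, the case where $Y$ or $Z$ is an annulus needs a separate argument (projections to annular complexes behave differently, and ``filling'' is not the right notion); you do not address this. Second, your remark that a leaf of a lamination component of $\mu$ can be used in place of $\alpha$ is correct in spirit, but one must check that such a leaf actually yields a well-defined element of $\CC(Z)$ after surgery; this is routine but should be said.
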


We will also use the fact that a sequence of curves which is not eventually constant blows up on some subsurface.

\begin{lemma}{blow up subsurface}
Given a sequence of simple closed curves $\{c_ n\}$ and a complete marking $\mu$ on a compact surface $S$, 
there  is a subsequence $\{c_j\}$ such that either $\{c_j\}$ is constant or there is a
subsurface $Y\subseteq S$ with  $d_Y(\mu,c_j)\longrightarrow\infty$.
\end{lemma}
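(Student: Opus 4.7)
The plan is to pass to a subsequence whose geodesic representatives converge in the Hausdorff topology, and to read off a subsurface $Y$ where the projections must diverge from the structure of the Hausdorff limit. If some subsequence of $\{c_n\}$ is constant we are done, so assume the $c_n$ are pairwise distinct. Fixing an auxiliary hyperbolic metric on $S$ and using compactness of the space of geodesic laminations in the Hausdorff topology, I extract a subsequence $c_j$ whose geodesic representatives converge to a geodesic lamination $\lambda$. Note that $\lambda$ cannot be a single simple closed curve $\alpha$: otherwise the $c_j$ would eventually lie in an arbitrarily thin collar of $\alpha$, forcing $c_j = \alpha$ as isotopy classes for large $j$, contrary to distinctness.

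If $\lambda$ contains a minimal sublamination $\lambda_0$ that is not a single closed leaf, let $Y$ be the smallest essential subsurface carrying $\lambda_0$, so that $\lambda_0$ fills $Y$ and therefore $\lambda_0\in\EL(Y)$. Since $\mu$ is complete, $\pi_Y(\mu)\in\CC(Y)$ is well-defined. Hausdorff convergence $c_j\to\lambda$ together with $\lambda_0\subset\lambda$ forces $\pi_Y(c_j)\to\lambda_0$ in $\CC(Y)\cup\EL(Y)$, and under Klarreich's identification $\EL(Y)=\partial_\infty\CC(Y)$ this yields $d_Y(\mu,c_j)\to\infty$.

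Otherwise every minimal sublamination of $\lambda$ is a simple closed curve. The lamination $\lambda$ cannot consist solely of disjoint closed curves, since a sufficiently thin neighborhood of such a union is disconnected and therefore cannot contain the connected curve $c_j$ once $c_j$ is Hausdorff-close to $\lambda$. Hence $\lambda$ contains a non-closed leaf, which, since every minimal sublamination is closed, must spiral onto some closed leaf $\alpha$ of $\lambda$. Hausdorff convergence $c_j\to\lambda$ then forces $c_j$ to contain longer and longer arcs that spiral around $\alpha$; passing to the annular cover of $S$ at $\alpha$ (where $\CC(\collar(\alpha))$ is quasi-isometric to $\Z$ and counts twisting), these long spiraling arcs produce lifts of $c_j$ whose combinatorial distance from a fixed lift of $\mu$ grows without bound, so $d_{\collar(\alpha)}(\mu,c_j)\to\infty$ and I take $Y=\collar(\alpha)$. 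The main technical point to verify carefully is this last quantitative passage, from Hausdorff convergence with a spiraling leaf in the limit to unbounded annular projection distance; the non-annular case, by contrast, follows cleanly from continuity of $\pi_Y$ with respect to Hausdorff convergence and the Klarreich boundary identification.
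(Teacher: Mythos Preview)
Your proof is correct and follows essentially the same route as the paper: pass to a Hausdorff-convergent subsequence, use Klarreich's boundary identification for the non-annular case, and use spiraling leaves in the annular cover for the annular case. The paper phrases the dichotomy slightly differently (whether $\lambda$ contains an \emph{isolated} closed leaf) and makes the annular step more explicit by observing that the acute angle between $\tilde c_j$ and the lift $\tilde\lambda_0$ tends to zero, which forces $i(\tilde c_j,\tilde a)\to\infty$ for any fixed arc $\tilde a$; you may want to sharpen your final paragraph along these lines, and also tighten the multicurve-exclusion step (the point is not merely that the thin neighborhood is disconnected, but that Hausdorff convergence forces $c_j$ to visit every component while connectedness confines it to one).
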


\begin{proof}
Fix a metric on $S$ and realize the sequence $\{c_n\}$ as a sequence of closed geodesics. We then extract a subsequence 
$\{c_j\}$ that converges in the Hausdorff topology on closed subsets of $S$ to a geodesic lamination $\lambda$. 
If $\lambda$ contains an isolated simple closed curve, then $\{c_ j\}$ is eventually constant and we are done.
If not, let $Y$ be the supporting subsurface of a minimal sublamination $\lambda_0$ of $\lambda$.  
If $Y$ is  not an annulus, then $\lambda_0\in \mathcal{EL}(S)$ and results of Klarreich \cite[Theorem 1.4]{klarreich}
(see also Hamenstadt \cite{hamenstadt}) imply
that $d_Y(\mu,c_ j)\to\infty$. 

If $\lambda_0$ is a simple closed geodesic, then $Y =\collar(\lambda_0)$
is an annulus and, since $\lambda$ doesn't contain an isolated simple close curve, there must be leaves of $\lambda$ 
spiraling around $\lambda_0$.
Let $\tilde S_0$ be the annular cover  of $S$ associated  to the cyclic subgroup of $\pi_1(S)$ generated by $\lambda_0$
and let $\tilde \lambda_0$ be the unique lift of $\lambda_0$ to $\tilde S_0$. 
Let $\tilde c_j = \pi_Y(c_j)$.
Since $\{c_j\}$ converges to $\lambda$ in the Hausdorff topology and there exist leaves of $\lambda$ spiraling about
$\lambda_0$,
the acute angle between $\tilde c_j$ and $\tilde\lambda_0$ converges to $0$. 
It follows that $i(\tilde c_j, \tilde a) \to \infty$ for any fixed element $\tilde a\in\CC(Y)$.
In particular, if $d$ is a component of $\mu$ that intersects $\lambda_0$ and $\tilde{d}=\pi_Y(d)$, then
$$d_Y(c_j,d) = d_{\CC(Y)}(\tilde c_j, \tilde d) = i(\tilde c_j, \tilde d)+1\to\infty.$$
It follows that $d_Y(c_j,\mu)\to\infty$ as desired.
\end{proof}

\subsection{End invariants}	
\label{end invariants}

If $\rho\in AH(S)$, the end invariants of $N_\rho$ encode the asymptotic geometry of $N_\rho={\bf H}^3/\rho(\pi_1(S))$.
The Ending Lamination Theorem (see Minsky \cite{ELC1} and Brock-Canary-Minsky \cite{ELC2})  asserts 
that a representation $\rho\in AH(S)$ 
is uniquely determined by its end invariants. The reader will find a more extensive discussion of  the definition of the end invariants
and the  Ending Lamination Theorem in Minsky \cite{ELC1}.

A  $\rho(\pi_1(S))$-invariant collection $\mathcal{H}$ of disjoint horoballs in ${\bf H}^3$ is a {\em precisely invariant collection of horoballs} for
$\rho(\pi_1(S))$ if there is a horoball based at the fixed point of every parabolic element of $\rho(\pi_1(S))$ 
(and every horoball in $\mathcal{H}$ is based at a parabolic fixed point). 
The existence of such a collection is a classical consequence of the Margulis Lemma, see \cite[Proposition VI.A.11]{maskit} for example.
We define
$$N^0_\rho=({\bf H}^3-\bigcup_{H\in{\mathcal{H}}} H)/\rho(\pi_1(S)).$$
If $\mathcal{H}_p$ denotes the set of horoballs in $\mathcal{H}$ which are associated to peripheral elements
of $\pi_1(S)$, then we define
$$N^1_\rho=({\bf H}^3-\bigcup_{H\in{\mathcal{H}_p}} H)/\rho(\pi_1(S)).$$

A {\em relative compact core} for $N^0_\rho$ is a compact submanifold $M_\rho$ of $N_\rho^0$ such that the inclusion of
$M_\rho$ into $N_\rho$ is a homotopy equivalence and $M_\rho$ intersects each component of $\partial N_\rho^0$ in an
incompressible annulus. Let $P_\rho=M_\rho\cap\partial N^0_\rho$ and let
\hbox{$P^1_\rho=M_\rho\cap\partial N^1_\rho$}. (See Kulkarni-Shalen \cite{kulkarni-shalen} and McCullough
\cite{mccullough} for proofs that $N_\rho^0$ admits a relative compact core.)

Bonahon \cite{bonahon} showed that  there is an orientation preserving homeomorphism from $S\times\R$ to $N_\rho^1$
in the homotopy class determined by $\rho$. 
We will implicitly identify $N_\rho^1$ with $S\times\R$ throughout the paper.
Suppose that $W$ is a subsurface of $S$ and $f:W\to N_\rho^1$ is a map of $W$ into $S\times \R$ 
(in the homotopy class associated to $\rho|_{\pi_1(W)}$). We say that $f$  (or $f(W)$) is a {\em level subsurface}
if it is an embedding which is isotopic to  $W\times \{0\}$. If $W=S$, we say $f$ (or $f(S)$) is a {\em level surface}.

The {\em conformal boundary} $\partial_c N_\rho$ of $N_\rho$ is the quotient  by $\Gamma$ of the domain $\Omega(\rho)$ of
discontinuity for the action of $\rho(\Gamma)$ on $\hat\C$.
One may identify the conformal boundary $\partial_cN_\rho$ with a collection of components of $\partial M_\rho-P_\rho$. 
The other components of $\partial M_\rho-P_\rho$ bound neighborhoods of geometrically infinite ends of $N_\rho^0$. If $E$ is a geometrically infinite end with a neighborhood bounded by a component $W$ of $\partial M_\rho-P_\rho$, 
then there
exists a sequence  $\{\alpha_n\}\subset{\mathcal{C}}(W,\rho,L_1)$, for some $L_1=L_1(S)>0$,
whose geodesic representatives $\{\alpha_n^*\}$ exit $E$ (see Lemma \ref{bounded curves above in ends} for
a more careful statement).
The sequence $\{\alpha_n\}$ converges to an ending lamination $\lambda\in{\mathcal{EL}}(S)$ and we call
$\lambda$ the {\em ending lamination} of $E$ ($\lambda$ does not depend on the choice of the sequence $\{\alpha_n\}$). Moreover, if $\{\beta_n\}$ is any sequence in $\mathcal{C}(W)$
which converges to $\lambda$, then the sequence $\{\beta_n^*\}$ of geodesic representatives in $N_\rho$
exits $E$. (See Bonahon \cite{bonahon} for an extensive discussion of geometrically infinite ends.)

There exists an orientation-preserving homeomorphism of $S\times I$ with $M_\rho$, again in the homotopy
class determined by $\rho$, so that $\partial S\times I$ is identified with $P_\rho^1$.
Let $P^+_\rho$ denote the components of $P_\rho$ contained in $S\times \{1\}$ and let
$P^-_\rho$ denote the component of $P_\rho$ contained in $S\times \{0\}$.
A core curve of a component of $P^+_\rho$ is called an {\em upward-pointing parabolic curve} and
a core curve of a component of $P^-_\rho$ is called a {\em downward-pointing parabolic curve}.
Similarly, a component of $\partial_cN_\rho$ or a geometrically infinite end of $N_\rho$ is called {\em upward-pointing}
if it is identified with a a subsurface of $S\times \{1\}$, and is called {\em downward-pointing} if it is identified
with a subset of $S\times \{0\}$.

The {\em end invariant} $\nu_\rho^+$ consists of the multicurve $p^+$ of upward-pointing parabolic curves
together with a conformal structure on each geometrically finite component of $S\times \{1\}-p^+$, coming
from the conformal structure on the associated component of the conformal boundary,
and a filling lamination on each geometrically infinite component, which is the ending
lamination of the associated end. The end invariant $\nu_\rho^-$ is defined
similarly.

If $\nu$ is an end invariant, we define an associated generalized marking $\mu(\nu)$.
We let ${\rm base}(\mu(\nu))$ consist of all the curve and lamination
components of $\nu$ together with a minimal length pants decomposition of the conformal (hyperbolic)
structure on each  geometrically finite component.
For each curve in the minimal length pants decomposition of a geometrically finite component 
we choose a minimal length transversal. Notice that the associated marking is well-defined up
to uniformly bounded ambiguity.

Given $\rho\in AH(S)$ with end invariants $\nu^\pm$, we then define, for each essential subsurface
$W$ of $S$,
$$\pi_W(\nu^\pm)=\pi_W(\mu(\nu^\pm)).$$

Property $(3)$ in Theorem \ref{predictive power} can be viewed as a continuity property for the projections of end invariants to subsurfaces. This property was established by Brock-Bromberg-Canary-Minsky in \cite{BBCM}:

\begin{theorem}{limitend}{\rm (\cite[Theorem 1.1]{BBCM})}
Let $\rho_n\longrightarrow\rho$ in $AH(S)$. If $W\subseteq S$ is an essential subsurface of $S$, 
other than an annulus or a pair of pants, and $\lambda\in EL(W)$ is a lamination supported on $W$,
then the following statements are equivalent :
\begin{enumerate}[(1)]
\item $\lambda$ is a component of $\nu^+_\rho$.
\item $\{\pi_W(\nu^+_{\rho_n})\}$ converges to $\lambda$.
\end{enumerate}
\end{theorem}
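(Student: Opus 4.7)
The plan is to establish both implications by combining continuity of translation length under algebraic convergence with the intrinsic characterization of ending laminations recalled in Section~\ref{end invariants} and Minsky's Lipschitz model theorem for ends of Kleinian surface groups. Throughout, $L$ denotes a uniform constant that may change from line to line.

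For (1)$\Rightarrow$(2), assume $\lambda\in\EL(W)$ is the ending lamination of an upward-pointing geometrically infinite end $E$ of $N_\rho^0$ whose support is $W$. I would choose $\{\alpha_k\}\subset\CC(W)$ with $\alpha_k\to\lambda$ and $\ell_\rho(\alpha_k)\leq L$ whose geodesic representatives $\alpha_k^*$ in $N_\rho$ exit $E$. By continuity of length, for each fixed $k$ we have $\ell_{\rho_n}(\alpha_k)\leq L+1$ for all $n$ large, and geometric convergence places the realizations of $\alpha_k$ in $N_{\rho_n}$ arbitrarily deep in the upward-pointing $W$-region. Invoking the Lipschitz model for $N_{\rho_n}$, a bounded-length curve sitting deep in the upward $W$-region has bounded $\CC(W)$-distance to $\pi_W(\nu^+_{\rho_n})$. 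A diagonal extraction then yields $\pi_W(\nu^+_{\rho_n})\to\lambda$ in $\CC(W)\cup\EL(W)$.

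For (2)$\Rightarrow$(1), set $\alpha_n=\pi_W(\nu^+_{\rho_n})$. By construction of $\mu(\nu^+_{\rho_n})$ (Bers-type short pants decompositions on the geometrically finite pieces, plus the parabolic multicurve and any ending laminations), $\alpha_n$ may be chosen with $\ell_{\rho_n}(\alpha_n)\leq L$. Continuity of length passes the bound to the limit: $\ell_\rho(\alpha_n)\leq L$. Since $\alpha_n\to\lambda\in\EL(W)$ and $\lambda$ fills $W$, any Hausdorff subsequential limit of the $\alpha_n^*$ in $N_\rho$ is a geodesic lamination containing $\lambda$; this forces the $\alpha_n^*$ to exit an end of $N_\rho^0$ whose support contains $W$ and whose ending lamination has $\lambda$ as a component, hence equals $\lambda$ by minimality. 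To identify this end as upward-pointing, one tracks that the bounded-length realizations of $\alpha_n$ sit in the upward part of the Lipschitz model of $N_{\rho_n}$, and uses geometric convergence to place their limiting positions in the upward part of $N_\rho$.

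The main obstacle in both directions is the translation between combinatorics and geometry: converting the statement ``$\pi_W(\nu^+_{\rho_n})$ is near $\lambda$'' into the statement ``short curves near $\lambda$ lie deep in the upward $W$-end,'' and conversely. This is precisely the content of Minsky's Lipschitz model theorem, including the orientation information needed to distinguish upward from downward ends. Additional care is required to rule out that $\lambda$ arises from a downward end or from a lamination component of $\nu^-_\rho$; this is handled by the orientation built into the model map together with continuity of length and geometric convergence.
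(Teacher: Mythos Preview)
The paper does not prove Theorem~\ref{limitend}; it is quoted verbatim from \cite[Theorem~1.1]{BBCM} as background in Section~\ref{end invariants}, so there is no in-paper argument to compare your proposal against. The proof in \cite{BBCM} proceeds via the machinery underlying Theorems~\ref{bounded curves near geodesic} and~\ref{top is top} of the present paper (which are Theorems~1.2 and~1.3 of \cite{BBCM}), together with a careful analysis of topological ordering of bounded-length curves relative to geometric limits.

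Your sketch follows the right heuristic but contains a genuine gap in the direction (2)$\Rightarrow$(1). From $\ell_{\rho_n}(\alpha_n)\le L$ you cannot conclude $\ell_\rho(\alpha_n)\le L$ by ``continuity of length'': that principle gives $\ell_{\rho_n}(\gamma)\to\ell_\rho(\gamma)$ for each \emph{fixed} $\gamma$, but here both the representation and the curve vary with $n$. A bounded-length geodesic in $N_{\rho_n}$ need not lie in the image of the bilipschitz comparison map from a geometric limit, and even when it does, it is carried into the geometric limit $\hat N$ rather than into $N_\rho$; the covering $N_\rho\to\hat N$ can change homotopy classes. You also appeal to geometric convergence as though it were given, whereas only algebraic convergence is hypothesized; passing to a subsequence and then recovering the conclusion for the full sequence is required. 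Finally, the step distinguishing upward from downward ends is where most of the actual work in \cite{BBCM} resides, and invoking ``orientation built into the model map'' does not substitute for the ordering arguments encapsulated in Theorem~\ref{top is top}.
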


\subsection{The bounded length curve set}

The Ending Lamination Theorem \cite{ELC1,ELC2} assures that the end
invariants coarsely determine the geometry of $N_\rho$. In particular,
one can use the end invariants to bound the lengths of curves in
$N_\rho$ and to coarsely determine the set of curves of bounded
length. We will need several manifestations of this principle.

It is often useful to, given $L>0$, consider the set of all curves in
$N_\rho$ with length at most $L$.  We define
$${\mathcal{C}}(\rho,L)=\{d\in{\mathcal{C}}(S)\ | \ \ell_\rho(d)\le L\}.$$

Minsky, in \cite{minsky:kgcc}, showed that if the projection of
${\mathcal{C}}(\rho, L)$ to ${\mathcal{C}}(W)$ has large diameter,
then $\partial W$ is short in $N_\rho$.

\begin{theorem}{kgcc fact}{\rm (\cite[Theorem 2.5]{minsky:kgcc})}
Given $S$, $\ep>0$ and $L>0$, there exists
$B(\epsilon,L)$ such that if $\rho\in AH(S)$, $W\subset S$ is a proper subsurface
and 
$${\rm diam}(\pi_W({\mathcal{C}}(\rho,L)))>B(\epsilon,L),$$
then $l_\tau(\partial W)<\epsilon$.
\end{theorem}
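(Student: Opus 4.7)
The plan is to prove the contrapositive: assuming $\ell_\rho(\partial W) \geq \epsilon$, I will bound $\diam(\pi_W(\mathcal{C}(\rho,L)))$ in terms of $\epsilon$, $L$, and the topology of $S$. The main strategy is to show that each $c \in \mathcal{C}(\rho,L)$ projects, in $\mathcal{C}(W)$, to within bounded distance of a reference marking of $W$ that depends only on $\rho$ and $W$, not on $c$; this immediately yields a universal diameter bound.

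First, I would realize each $c\in\mathcal{C}(\rho,L)$ by a pleated surface $f_c\colon S\to N_\rho$ in the homotopy class determined by $\rho$, mapping $c$ to its geodesic representative. Let $\sigma_c$ denote the pulled-back hyperbolic metric on $S$. Because pleated surfaces are $1$-Lipschitz on homotopy classes, $\ell_{\sigma_c}(c)\leq L$ and, crucially, $\ell_{\sigma_c}(\partial W)\geq \ell_\rho(\partial W)\geq \epsilon$. This lower bound on $\ell_{\sigma_c}(\partial W)$ controls the collar widths of $\partial W$ on $\sigma_c$, so via the collar lemma a Bers-short pants decomposition $P_c$ of the subsurface $W\subset \sigma_c$ (of total length at most a universal Bers constant $L_0(S)$) satisfies $i(c,P_c)\leq C(L,\epsilon,S)$. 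Applying Lemma \ref{intersection bounds distance} gives $d_W(c,P_c)\leq 2C+1$.

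To produce a canonical reference marking, I would pass to the cover $N_W\to N_\rho$ corresponding to $\pi_1(W)\subset \pi_1(S)$. The hypothesis $\ell_\rho(\partial W)\geq \epsilon$ ensures that in $N_W$ each component of $\partial W$ is either realized as a geodesic of length at least $\epsilon$ or as a parabolic with controlled cusp geometry. In either case $W$ admits a pleated surface realization in $N_W$ whose induced metric $\tau_W$ on $W$ has boundary length at least $\epsilon$, and a Bers-short pants decomposition $P_W$ of $(W,\tau_W)$ is then well-defined up to bounded $\mathcal{C}(W)$-ambiguity depending only on $\epsilon$ and $S$. Repeating the collar-lemma/intersection estimate for $P_c$ and $P_W$, viewed as bounded-length curves in the cover, yields $d_W(P_c,P_W)\leq C'(\epsilon,S)$.

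The combined bound $d_W(c,P_W)\leq C+C'$ is independent of $c$, giving the desired diameter estimate of $2(C+C')$ for $\pi_W(\mathcal{C}(\rho,L))$. The main obstacle is the third step: making $P_W$ genuinely independent of $c$ and carrying out the intersection comparison cleanly across two different pleated surfaces. An alternative that sidesteps the cover is a chain argument, interpolating between $f_{c_1}$ and $f_{c_2}$ via elementary moves on realized multicurves (Hatcher--Thurston); at each step the projection $\pi_W$ changes by a bounded amount because the bound $\ell\geq \epsilon$ on $\partial W$ persists across pleated surfaces, and the total chain length is bounded purely in terms of $\epsilon$ since bounded-length markings live in a compact part of the moduli space of $W$ relative to $\partial W$.
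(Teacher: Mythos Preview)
The paper does not prove this statement; it is quoted verbatim from Minsky \cite[Theorem~2.5]{minsky:kgcc} and used as a black box. So there is no proof in the paper to compare against, and your proposal must be judged on its own merits and against Minsky's original argument.

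Your steps (1)--(3) are sound: realizing $c$ on a pleated surface $\sigma_c$, observing $\ell_{\sigma_c}(\partial W)\ge\epsilon$, and using the collar lemma to bound $i(c,P_c)$ and hence $d_W(c,P_c)$ all work. The genuine gap is step (5). You want to bound $d_W(P_c,P_W)$ by ``repeating the collar-lemma/intersection estimate,'' but that estimate only applies to two bounded-length curve systems on the \emph{same} hyperbolic surface. Here $P_c$ is short on $\sigma_c$ and $P_W$ is short on $\tau_W$; there is no common metric on which both are short, and having bounded length in the 3-manifold $N_W$ does not bound intersection number on a surface (Dehn twisting gives arbitrarily high intersection between two curves of fixed $N_\rho$-length). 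This comparison across pleated surfaces is precisely the heart of the theorem, and it does not follow from anything you have written.

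Your alternative chain argument is much closer to what Minsky actually does, but the sentence ``the total chain length is bounded purely in terms of $\epsilon$ since bounded-length markings live in a compact part of the moduli space of $W$'' is circular: that compactness is essentially the conclusion you are trying to prove. Minsky's argument interpolates between pleated surfaces realizing $c_1$ and $c_2$ via a \emph{halfway surface} construction and Thurston's Uniform Injectivity Theorem; the lower bound $\ell_\rho(\partial W)\ge\epsilon$ enters by controlling how the pleated surfaces sit relative to the thin part, which in turn bounds the diameter of the set of short markings restricted to $W$. You have correctly identified this as the main obstacle, but you have not supplied the mechanism (Uniform Injectivity plus the halfway-surface interpolation) that overcomes it.
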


In \cite{BBCM} it is proven that $\pi_W({\mathcal{C}}(\rho,L))$ is
well-approximated by a geodesic joining $\pi_W(\nu^+)$ to
$\pi_W(\nu^-)$.

\begin{theorem}{bounded curves near geodesic}{\rm (\cite[Theorem
    1.2]{BBCM})}
Given $S$, there exists $L_0>0$ such that for all $L\ge L_0$, there
exists $D_0=D_0(L)$, such that, if $\rho\in AH(S)$ has end invariants
$\nu^\pm$, and $W\subset S$ is an essential subsurface more complicated
than a thrice-punctured sphere, then $\pi_W(C(\rho,L))$
has Hausdorff distance  at most $D_0$ from any geodesic in ${\mathcal{C}}(W)$ joining
$\pi_W(\nu^+)$ to $\pi_W(\nu^-)$.
Moreover, if \hbox{$d_W(\nu^+,\nu^-) > D_0$}, then 
$$C(W,\rho,L)= \{\alpha\in{\mathcal{C}}(W):l_\alpha(\rho) < L\}$$
is nonempty and also has Hausdorff distance  at most $D_0$ from any geodesic 
in ${\mathcal{C}}(W)$ joining $\pi_W(\nu^+)$ to $\pi_W(\nu^-)$.
\end{theorem}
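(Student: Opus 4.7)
The plan is to prove this using the bi-Lipschitz model manifold technology of Minsky (\cite{minsky:kgcc,ELC1}) and Brock-Canary-Minsky (\cite{ELC2}), together with the hierarchy machinery of Masur-Minsky \cite{masur-minsky2}. First I would build the hierarchy $H$ of tight geodesics associated to the pair $(\nu^+,\nu^-)$ of end invariants and the corresponding model manifold $M_\nu$, and invoke the bi-Lipschitz homeomorphism $M_\nu\to N_\rho$. The central feature is the length estimate: there is a uniform $L_0=L_0(S)$ such that every vertex $v$ appearing in a geodesic of $H$ supported on $W$ satisfies $\ell_\rho(v)\le L_0$, so such $v$ lies in $\pi_W(C(\rho,L))$ for every $L\ge L_0$. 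Conversely, short curves in $N_\rho$ are detected in $H$, either as hierarchy vertices or by large subsurface projections.

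Next, fix any geodesic $g_W$ in $\mathcal{C}(W)$ from $\pi_W(\nu^+)$ to $\pi_W(\nu^-)$. The hierarchy supplies a geodesic $h_W$ in $\mathcal{C}(W)$ with the same endpoints (or shows that these projections are uniformly close), and by Gromov hyperbolicity \cite{masur-minsky} $g_W$ and $h_W$ lie at uniform Hausdorff distance. So the problem reduces to bounding the Hausdorff distance between $\pi_W(C(\rho,L))$ and $h_W$. The inclusion $h_W\subset_{D_0}\pi_W(C(\rho,L))$ follows directly from the length estimate above: vertices of $h_W$ are their own projections to $\mathcal{C}(W)$ and have length $\le L_0\le L$.

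For the reverse inclusion, given $\alpha\in C(\rho,L)$ I would show $\pi_W(\alpha)$ lies within a uniform distance of $h_W$. The main tool here is Theorem \ref{kgcc fact}: if $\pi_W(\alpha)$ were very far from both endpoints of $h_W$, then along any subsurface $V\subset S$ with $\alpha\subset\partial V$ one could find large projection sets of $C(\rho,L)$ into $\mathcal{C}(V)$, forcing $\partial V$ to be very short. Combined with the Behrstock-type consistency inequality (Lemma \ref{inequality on triples}) and the triangle inequality $d_V(\nu^+,\nu^-)\le d_V(\nu^+,\alpha)+d_V(\alpha,\nu^-)+O(1)$, this propagates the length bound on $\alpha$ to a projection bound, showing $\pi_W(\alpha)$ lies on a uniform neighborhood of any geodesic between $\pi_W(\nu^\pm)$. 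The ``moreover'' clause — that $C(W,\rho,L)$ itself (not just projections) is nonempty and fellow-travels $g_W$ when $d_W(\nu^+,\nu^-)>D_0$ — then follows because when $h_W$ is long, the vertices in its interior lie at $\mathcal{C}(W)$-distance more than $1$ from $\partial W$, so the corresponding short curves in $N_\rho$ are genuinely carried by $W$ rather than arising as surgered arcs.

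The hard part will be the reverse inclusion, since a priori a bounded-length curve could project anywhere in $\mathcal{C}(W)$. Making the Behrstock/consistency argument truly uniform across all subsurfaces $W$ requires the full strength of the model manifold, in particular the fact that the bi-Lipschitz correspondence realizes $\alpha$ by a short geodesic loop whose position relative to the hierarchy is controlled. The alternative route, using only Theorem \ref{kgcc fact} and hyperbolicity of curve complexes, is cleaner but depends on having the hierarchy already certified as a quasigeodesic in $\mathcal{C}(W)$ — which is precisely the content of the Masur-Minsky hierarchy theorem and should be imported as a black box.
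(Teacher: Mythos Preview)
This theorem is not proved in the present paper: it is quoted in the Background section as \cite[Theorem~1.2]{BBCM} and used as a black box. There is therefore no proof here to compare your proposal against; the statement is imported wholesale from Brock--Bromberg--Canary--Minsky, \emph{Convergence properties of ending invariants}.

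That said, your sketch is broadly the right shape for how the result is actually established in \cite{BBCM}: the model manifold and hierarchy machinery, the a priori length bound on hierarchy vertices, and hyperbolicity of $\mathcal{C}(W)$ are indeed the main ingredients. One caution on the reverse inclusion: your proposed mechanism (``if $\pi_W(\alpha)$ were far from both endpoints then some $V$ with $\alpha\subset\partial V$ would have large projection diameter'') is not quite the argument --- you need to control the position of $\pi_W(\alpha)$ relative to the geodesic $h_W$, not merely relative to its endpoints, and Theorem~\ref{kgcc fact} alone does not give this. The actual argument in \cite{BBCM} passes through the hierarchy resolution and the topological ordering of bounded-length curves in the model, closer to what you gesture at in your final paragraph.
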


As a generalization of Minsky's a priori bounds (see \cite[Lemma 7.9]{ELC1}),
Bowditch proved that all curves on a tight geodesic in
${\mathcal{C}}(W)$ joining two bounded length multicurves, also have
bounded length. We recall that if $W$ is a non-annular essential
subsurface of $S$, then a {\em tight geodesic} is a sequence $\{w_i\}$
of simplices in ${\mathcal{C}}(W)$ such that if $v_i$ is a vertex of
$w_i$ and $v_j$ is a vertex of $w_j$, then $d_W(v_i,v_j)=|i-j|$ and
each $w_i$ is the boundary of the subsurface filled by $w_{i-1}\cup
w_{i+1}$.

\begin{theorem}{bowditch a priori}
{\rm (Bowditch \cite[Theorem 1.3]{bowditch})}
Let $S$ be a compact orientable surface.
Given $L>0$ there exists $R(L,S)$ such that if 
\hbox{$\rho\in AH(S)$}, $W$ is an essential non-annular subsurface of $S$,
$\{w_i\}_{i=0}^n$ is a tight geodesic in ${\mathcal{C}}(W)$, and $\ell_\rho(w_0)\le L$ and
$\ell_\rho(w_n)\le L$, then
$$\ell_\rho(w_i)\le R$$
for all $i=1,\ldots,n-1$.
\end{theorem}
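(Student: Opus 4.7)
The plan is to argue by contradiction using pleated surfaces together with the hyperbolicity of the curve complex. Suppose no such $R$ exists. Then there exist $L>0$, sequences $\rho_k\in AH(S)$, essential non-annular subsurfaces $W_k\subseteq S$, tight geodesics $\{w_i^k\}_{i=0}^{N_k}$ in $\mathcal{C}(W_k)$ with $\ell_{\rho_k}(w_0^k),\ell_{\rho_k}(w_{N_k}^k)\le L$, and indices $i_k$ with $\ell_{\rho_k}(w_{i_k}^k)\to\infty$. Since there are only finitely many topological types of essential non-annular subsurfaces of $S$ up to isotopy, after passing to a subsequence we may assume $W_k=W$ is constant.

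For each $k$, realize $w_0^k$ as a closed geodesic in $N_{\rho_k}$ and extend it to a pleated map $f_0^k\colon W\to N_{\rho_k}$ in the homotopy class determined by $\rho_k|_{\pi_1(W)}$. Standard pleated surface theory (the Margulis lemma together with bounded area) produces a pants decomposition $P_0^k$ of $W$ whose curves have $\rho_k$-length and intersection number with $w_0^k$ bounded by a constant depending only on $L$ and the topology of $S$. By Lemma \ref{intersection bounds distance}, $d_{\mathcal{C}(W)}(P_0^k,w_0^k)$ is uniformly bounded, and the analogous statement holds for a pants decomposition $P_n^k$ near $w_{N_k}^k$.

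The key step is to propagate bounded-length curves along the tight geodesic. Given a bounded-length curve $\alpha$ in $N_{\rho_k}$ whose $\mathcal{C}(W)$-projection lies uniformly near some $w_j^k$, pleating a surface along $\alpha$ yields a bounded-length pants decomposition on $W$. Combining the tightness of $\{w_i^k\}$ (which forces $w_{j+1}^k$ to equal the boundary of the subsurface filled by $w_j^k\cup w_{j+2}^k$) with the hyperbolicity of $\mathcal{C}(W)$, one shows that at least one curve of this pants decomposition must lie within uniform $\mathcal{C}(W)$-distance of $w_{j+1}^k$. Iterating starting from $P_0^k$ produces a sequence of bounded-length curves fellow-traveling the tight geodesic, and in particular a bounded-length curve $\alpha_k$ uniformly close to $w_{i_k}^k$. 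Pleating along $\alpha_k$ then bounds the intersection, hence the length, of $w_{i_k}^k$ by a uniform constant, contradicting $\ell_{\rho_k}(w_{i_k}^k)\to\infty$.

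The principal difficulty is the propagation step: verifying that some curve in the bounded-length pants decomposition produced at stage $j$ genuinely advances to distance $j+1$ along the tight geodesic, rather than remaining near $w_j^k$ or overshooting. This requires a delicate interplay between the combinatorial rigidity of tight geodesics and the geometric rigidity afforded by pleated surfaces. An alternative route, which I would attempt if the direct propagation proves intractable, is to extract a pointed geometric limit of the $N_{\rho_k}$ together with a limit tight geodesic of infinite length, and then use Theorem \ref{bounded curves near geodesic} combined with the Ending Lamination Theorem to derive a contradiction in the limit, where any geodesic in $\mathcal{C}(W)$ joining the projections of the end invariants is fellow-traveled by bounded-length curves.
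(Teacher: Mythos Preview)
The paper does not prove this statement at all; it is quoted from Bowditch's paper as background (hence the attribution ``Bowditch [Theorem 1.3]''). So there is no proof in the paper to compare against. That said, your proposal has two genuine problems worth flagging.

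First, the propagation step is not a detail but the entire content of the theorem, and your sketch does not supply the missing idea. Having a bounded-length pants decomposition $P$ with $d_{\mathcal{C}(W)}(P,w_j^k)$ bounded does not, by itself, force any curve of $P$ to be close to $w_{j+1}^k$: a priori $P$ could sit near $w_j^k$ in $\mathcal{C}(W)$ while being wildly far from the tight geodesic in subsurface projections, and pleating along a curve of $P$ gives you another bounded-length pants decomposition with no obvious reason to advance along the geodesic. Bowditch's actual argument does not iterate pants decompositions in this way; he introduces an electric pseudometric on the 3-manifold (collapsing thin parts) and proves that pleated surfaces realizing \emph{adjacent} vertices $w_j^k$ and $w_{j+1}^k$ of the tight geodesic are uniformly close in that metric. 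Tightness is used to control how these surfaces interact with Margulis tubes, not merely as a combinatorial fellow-traveling statement in $\mathcal{C}(W)$. Your outline never makes contact with this mechanism.

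Second, your fallback route is circular. Theorem~\ref{bounded curves near geodesic} is proved in \cite{BBCM} using the Lipschitz model of \cite{ELC1}, whose construction already requires Minsky's a priori bounds (Lemma 7.9 of \cite{ELC1}), of which Bowditch's theorem is the generalization. Likewise the Ending Lamination Theorem rests on the same a priori bounds. You cannot invoke either to prove the result you are trying to establish.
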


\subsection{Margulis regions and topological ordering}

There exists a constant $\epsilon_3>0$, known as the {\em Margulis
  constant}, such that if $\epsilon\in(0,\epsilon_3)$ and $N$ is a
hyperbolic 3-manifold, then each component of the {\em thin part}
$$N_{thin(\epsilon)}=\{x\in N\ |\ {\rm inj}_N(x)<\epsilon\}$$
is either a solid torus neighborhood of a closed geodesic or the
quotient of a horoball by a group of parabolic elements (see \cite[Corollary 5.10.2]{thurston-notes} for example). If $\rho\in
AH(S)$ and $d$ is a curve on $S$, then let $\MT_\epsilon(d)$ be the
component of $N_{thin(\epsilon)}$ whose fundamental group is generated
by $d$. With this definition, $\MT_{\epsilon}(d)$ will often be
empty. When it is non-empty, we will call it a {\em Margulis region}
and when it is non-compact we will call it a {\em Margulis cusp
  region}. Notice that if $N={\bf H}^3/\Gamma$, then the pre-image in
${\bf H}^3$ of all the non-compact components of $N_{thin(\epsilon)}$,
for any $\epsilon\in (0,\epsilon_3)$, is a precisely invariant system
of horoballs for $\Gamma$.

Suppose that $\alpha$ and $\beta$ are homotopically non-trivial curves
in $N^1_\rho$ and that their projections to $S$ intersect
essentially. We say that $\alpha$ lies above $\beta$ if $\alpha$ may
be homotoped to $+\infty$ in the complement of $\alpha$ (i.e. $\alpha$
may be homotoped into $S\times [R,\infty)$ in the complement of
$\beta$ for all $R$).  Similarly, we say that $\beta$ is {\em below}
$\alpha$ if $\beta$ may be homotoped to $-\infty$ in the complement of
$\alpha$ (see \cite[\textsection 2.5]{BBCM} for a more detailed discussion).

It is shown in \cite{BBCM} that if the geodesic representative of a curve $d$ lies above
the geodesic representative of the boundary component of  a subsurface $W$, then
the projection of $d$ lies near the projection of $\nu^+$.

\begin{theorem}{top is top}{\rm (\cite[Theorem 1.3]{BBCM} )}
Given $S$ and $L>0$ there exists $D=D(S,L)$ such that if $\alpha\in{\mathcal{C}}(S)$,
$\rho\in AH(S)$ has end invariants $\nu^\pm$, $l_\rho(\alpha)<L$, $\alpha$ overlaps  a proper subsurface $W\subset S$
(other than a thrice-punctured sphere),
and there exists a component $\beta$ of $\partial W$
such that $\alpha^*$ lies above $\beta^*$ in $N_\rho$, then
$$d_W(\alpha,\nu^+)<D.$$ 
\end{theorem}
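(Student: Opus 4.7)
\begin{pf*}{Proof plan}
The plan is to pass to the cover $\til N_W \to N_\rho$ corresponding to $\rho_*(\pi_1(W))$, where we can apply Theorem~\ref{bounded curves near geodesic} in the setting of a Kleinian surface group for $W$. Since $\alpha$ overlaps $W$, the closed geodesic $\alpha^*$ lifts isometrically to a closed geodesic $\til\alpha^*$ in $\til N_W$ of length at most $L$. A standard compatibility argument should yield that the end invariants $\til\nu^\pm$ of $\til N_W$ satisfy $d_W(\til\nu^\pm,\nu^\pm)\le C_0$ for a uniform constant. Applying Theorem~\ref{bounded curves near geodesic} inside $\til N_W$ then places $\pi_W(\alpha)$ within $D_0+C_0$ of any $\CC(W)$-geodesic joining $\pi_W(\nu^+)$ and $\pi_W(\nu^-)$, and the remaining task is to show $\pi_W(\alpha)$ sits near the $+$ endpoint.

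For this, I would transfer the ordering hypothesis to the cover. Since $\beta$ is a boundary component of $W$, the thin-part neighborhood of $\beta^*$ (or the corresponding cusp) lifts in $\til N_W$ to annular Margulis regions bounding the $W$-level structure. The condition that $\alpha^*$ can be homotoped to $+\infty$ in $N_\rho^1\setminus\beta^*$ then lifts to the statement that $\til\alpha^*$ lies above a level surface $W\times\{0\}$ in the identification $\til N_W^1\cong W\times\IR$, that is, in the upper-end region of $\til N_W$.

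The last step is to show that a bounded-length curve in the upper-end region of $\til N_W$ has $\CC(W)$-projection close to $\pi_W(\til\nu^+)$. Two inputs combine to give this. By Bonahon tameness, any sequence of bounded-length curves whose geodesic representatives exit the upper end of $\til N_W$ projects under $\pi_W$ to a sequence converging to $\pi_W(\til\nu^+)$ --- either a minimal-length marking on the geometrically finite upper end, or the ending lamination in $\EL(W)$ in the geometrically infinite case. Complementing this, Theorem~\ref{kgcc fact} bounds the $\CC(W)$-diameter of the set of bounded-length curves in the upper-end region of $\til N_W$ in terms of $L$, since a large projection diameter would force some proper subsurface of $W$ to have short boundary there, which is impossible above the cusps of $\partial W$. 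Together these place $\pi_W(\til\alpha)$ within a uniform bound of $\pi_W(\til\nu^+)$, yielding the desired $d_W(\alpha,\nu^+)\le D$.

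The main obstacle will be the compatibility claim of paragraph one: verifying that the $\CC(W)$-projections of the end invariants upstairs agree with those downstairs up to bounded error, uniformly in $\rho$. This requires careful analysis of how the geometrically infinite ends of $N_\rho$ lift to $\til N_W$ (invoking the Thurston covering theorem framework together with tameness for surface group covers) and of how the conformal boundary of $N_\rho$ relates to that of $\til N_W$. A secondary care point is the topological-order transfer when $\beta$ is parabolic in $N_\rho$, where one must enumerate the thin-region lifts in $\til N_W$ before selecting the one bounding the intended level surface.
\end{pf*}
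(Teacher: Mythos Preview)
The paper does not prove this statement: it is quoted verbatim from \cite[Theorem~1.3]{BBCM} and used as a black box. So there is no ``paper's own proof'' to compare against here.

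That said, your plan has a genuine gap at the very first step. You write that ``since $\alpha$ overlaps $W$, the closed geodesic $\alpha^*$ lifts isometrically to a closed geodesic $\til\alpha^*$ in $\til N_W$.'' This is false. The hypothesis that $\alpha$ \emph{overlaps} $W$ means precisely that $\alpha$ crosses $\partial W$ essentially; in particular $\alpha$ is not homotopic into $W$, so $\alpha$ is not conjugate into $\pi_1(W)$. Hence the preimage of $\alpha^*$ in the cover $\til N_W$ corresponding to $\rho(\pi_1(W))$ consists only of bi-infinite geodesics, never a closed one. You therefore have no bounded-length closed curve in $\til N_W$ to which Theorem~\ref{bounded curves near geodesic} can be applied, and the rest of the argument does not get off the ground.

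To repair this you would need a different mechanism for producing, from $\alpha$, a curve of bounded length \emph{in $\CC(W)$} whose geodesic representative inherits the ``lies above $\beta^*$'' ordering. The usual route (and the one taken in \cite{BBCM}) is to realize $\alpha$ on a $1$-Lipschitz (pleated) surface $f:X\to N_\rho$, use the bound $\ell_\rho(\alpha)<L$ together with the collar lemma to control the geometry of $f$ near the thin part associated to $\beta$, and then surger the arcs of $f(\alpha)$ crossing $W$ to obtain a bounded-length representative of $\pi_W(\alpha)$ whose position relative to the Margulis region of $\beta$ is controlled. The covering trick is not the right tool here precisely because the curve you care about is transverse to $\partial W$ rather than contained in $W$. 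Your compatibility worry about $\pi_W(\nu^\pm)$ versus $\til\nu^\pm$ is a real issue as well, but it is moot until the lifting problem is fixed.
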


\noindent {\bf Remark:} If $\rho(\alpha)$ is parabolic, then
$\alpha$ has no geodesic representative in $N_\rho$. If $\alpha$ is an upward-pointing parabolic,
it is natural to say that it lies above the geodesic representative of every curve it overlaps, while
if $\alpha$ is a downward-pointing parabolic,
it is natural to say that it lies below the geodesic representative of every curve it overlaps.

\bigskip

The following observation is a consequence of the geometric description of geometrically infinite ends 
(see Bonahon \cite{bonahon}).

\begin{lemma}{bounded curves above in ends}
Given a compact surface $S$, there exists $L_1=L_1(S)$ such that if
$\rho\in AH(S)$, $W$ is an essential sub-surface of $S$ which is the support of
a geometrically infinite end $E$  of $N_\rho^0$ and $\Delta$ is a finite subset of $\mathcal{C}(W)$,
then there exists a pants decomposition $r$ of $W$ such that $l_\rho(r)\leq L_1$ and any curve in $r$ lies above,
respectively below, any curve in $ \Delta $ when $E $ is upward-pointing, respectively downward-pointing.
\end{lemma}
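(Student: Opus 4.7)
The plan is to combine Bonahon's description of geometrically infinite ends with Bers' pants decomposition theorem applied to the intrinsic metric on a pleated surface. First, I would invoke Bonahon~\cite{bonahon}: since $E$ is a geometrically infinite end of $N^0_\rho$ supported on $W$, there is a sequence of simple closed curves $\gamma_n$ on $W$ whose geodesic representatives $\gamma_n^*$ in $N_\rho$ exit $E$ toward the $+\infty$ direction of the product structure $N^1_\rho\cong S\times\R$ (in the upward-pointing case). Standard pleated surface theory (Thurston, Bonahon, Canary) then furnishes, for each $n$, a pleated map $f_n\colon W\to N_\rho$ in the homotopy class associated to $\rho|_{\pi_1(W)}$ that realizes $\gamma_n$ and whose image $f_n(W)$ is eventually contained in $S\times[R_n,\infty)$ with $R_n\to\infty$.

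Next I would apply Bers' theorem to the intrinsic hyperbolic structure $\sigma_n$ on $W$ pulled back by $f_n$. This produces a pants decomposition $r_n$ of $W$ whose $\sigma_n$-length is bounded above by a Bers constant $L_1=L_1(S)$ depending only on the topological type of $W$, which is controlled by $S$. Since pleated maps are $1$-Lipschitz with respect to the intrinsic metric, every curve of $r_n$ has $l_\rho$-length at most $L_1$, giving the length bound asserted in the lemma.

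The heart of the argument is then to show that for $n$ large enough, every curve of $r_n$ lies above every curve of $\Delta$. Since $\Delta$ is finite, one can choose a compact set $K\subset N_\rho$ containing the geodesic representative $\delta^*$ of each non-parabolic $\delta\in\Delta$ together with a chosen horospherical neighborhood of each Margulis cusp region corresponding to a parabolic $\delta\in\Delta$; under the identification $N^1_\rho\cong S\times\R$, one arranges that $K\subset S\times[-R_0,R_0]$ for some $R_0$. For $n$ so large that $R_n>R_0$, the surface $f_n(W)$ lies in $S\times[R_n,\infty)$ and is therefore disjoint from $K$. Any curve $\alpha\in r_n$ that overlaps some $\delta\in\Delta$ can then be freely homotoped upward within $f_n(W)\cup \bigl(S\times[R_n,\infty)\bigr)$ toward $+\infty$ while remaining disjoint from $\delta^*$, which is precisely the definition of $\alpha$ lying above $\delta^*$. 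Setting $r=r_n$ for any such $n$ completes the upward-pointing case; the downward-pointing case is identical after reversing orientation.

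The main obstacle I expect is the clean translation between Bonahon's intrinsic statement ``the $\gamma_n^*$ exit $E$'' and the coordinate statement ``$f_n(W)\subset S\times[R_n,\infty)$ with $R_n\to\infty$'' needed to invoke the $S\times\R$ ordering used in the definition of ``lies above''. This requires pairing the pleated surfaces $f_n$ with appropriate exhausting neighborhoods of the end so that their images march monotonically outward in the product coordinate, and controlling what happens near the Margulis cusps of $N^0_\rho$ that sit on $\partial W$; once this setup is in place, the Bers estimate plus a compactness argument finish the proof.
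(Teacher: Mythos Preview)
The paper does not supply a proof of this lemma; it simply states it as an observation following from Bonahon's geometric description of geometrically infinite ends (\cite{bonahon}). Your proposed argument---exiting pleated surfaces realizing a sequence of simple closed curves whose geodesics leave the end, followed by Bers' bound on the intrinsic pleated metric---is exactly the standard way to unpack that citation, and it is correct.

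One small point worth tightening: the paper's notion of ``lies above'' is typically applied to geodesic representatives (see Theorem~\ref{top is top} and the remark following it), so you should close the loop between the representative $f_n(\alpha)$ on the pleated surface and the geodesic $\alpha^*$. Since $\ell_\rho(\alpha)\le L_1$, the homotopy from $f_n(\alpha)$ to $\alpha^*$ has tracks of length bounded in terms of $L_1$ alone (cf.\ \cite[Lemma~2.6]{ELC2}, used elsewhere in the paper), so for $R_n$ large enough $\alpha^*$ also lies in $S\times(R_0,\infty)$ and can be pushed to $+\infty$ off $\delta^*$. The obstacle you flag---that the pleated surfaces genuinely exit the end in the product coordinate---is handled by the bounded-diameter lemma for pleated surfaces modulo their thin parts (Thurston, Bonahon; see also \cite[\S5]{CEG}), together with the fact that the thin parts corresponding to $\partial W$ sit in fixed cusp neighborhoods.
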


\subsection{Lipschitz surfaces and bounded length curves}
\label{lipschitzsurf}

If $\rho\in AH(S)$, then
a {\em \hbox{$K$-Lipschitz} surface} in $N_\rho$ is a $\pi_1$-injective  \hbox{$K$-Lipschitz} map \hbox{$f:X\to N_\rho$} where $X$
is a (complete) finite area hyperbolic surface. Incompressible pleated surfaces, see Thurston \cite[section 8.8]{thurston-notes}
and Canary-Epstein-Green \cite[Chapter I.5]{CEG}, are examples of \hbox{$1$-Lipschitz} surfaces.
If $W$ is an essential subsurface of $S$ and \hbox{$\alpha\in\mathcal{C}(W)$}, then we say that a \hbox{$K$-Lipschitz}
surface $f:X\to N_\rho$, where $X$ is a hyperbolic structure on ${\rm int}(W)$, {\em realizes}  the pair $(\alpha,W)$
if there exists a homeomorphism $h:{\rm int}(W)\to X$ such that $(f\circ h)_*$ is conjugate to $\rho|_{\pi_1(W)}$
and $f(h(\alpha))=\alpha^*$.
Thurston observed that if $\rho(\pi_1(\partial W))$ is purely parabolic and $\rho(\alpha)$ is hyperbolic, then
one may always find a pleated surface realizing $(\alpha,W)$.

\begin{lemma}{realizations}
{\rm (Thurston \cite[Section 8.10]{thurston-notes}, Canary-Epstein-Green \cite[Theorem I.5.3.6]{CEG})}
Suppose that $\rho\in AH(S)$, $W$ is an essential subsurface of $S$ and $\alpha\in \mathcal{C}(W)$.
If every (non-trivial) element of $\rho (\pi_1(\partial W))$ is parabolic and $\rho(\alpha)$ is hyperbolic,
then there exists a 1-Lipschitz surface  realizing $(\alpha,W)$.
\end{lemma}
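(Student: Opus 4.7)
The strategy is to construct a pleated surface realizing $\alpha$, since any pleated surface is automatically a $1$-Lipschitz map from a finite-area hyperbolic surface into $N_\rho$. Thus the plan reduces to invoking the standard Thurston pleating construction (as in \cite[Section 8.10]{thurston-notes} and \cite[Chapter I.5]{CEG}) for a suitably chosen geodesic lamination containing $\alpha$.

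First I would extend $\alpha$ to a maximal geodesic lamination $\lambda$ on $\operatorname{int}(W)$ that contains $\alpha$ as a closed leaf, treating each component of $\partial W$ as a cusp. Such a $\lambda$ can be built by spinning finitely many isolated leaves into $\alpha$ from both sides and additional leaves out of the cusps of $\operatorname{int}(W)$ until every complementary region is an ideal triangle; by this choice $\alpha$ is the unique non-peripheral closed leaf of $\lambda$. Next I would apply the pleating construction on the universal cover $\widetilde{\operatorname{int}(W)}$: each lift of $\alpha$ is mapped isometrically onto the axis of the corresponding conjugate of $\rho(\alpha)$ (which exists because $\rho(\alpha)$ is hyperbolic); each lift of a non-closed leaf is mapped isometrically onto a bi-infinite geodesic in $\mathbb{H}^3$ whose two endpoints lie either on the axis of some lift of $\alpha$ or at a parabolic fixed point of the relevant element of $\rho(\pi_1(\partial W))$ (well-defined by the parabolicity hypothesis); and each complementary plaque is filled in by the unique totally geodesic ideal triangle in $\mathbb{H}^3$ spanning its already-defined boundary. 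Pulling back the hyperbolic metric on the image produces a complete finite-area hyperbolic structure $X$ on $\operatorname{int}(W)$, and the equivariant map descends to $f\colon X\to N_\rho$ with $f_*$ conjugate to $\rho|_{\pi_1(W)}$ and $f(\alpha)=\alpha^*$.

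It remains to verify that $f$ is a $1$-Lipschitz surface realizing $(\alpha,W)$. The $1$-Lipschitz property is automatic: $f$ restricts to an isometry on each leaf of $\lambda$ and on each complementary ideal triangle, so its derivative is norm non-increasing on every tangent vector. The map is $\pi_1$-injective because $f_*$ is conjugate to the discrete faithful representation $\rho|_{\pi_1(W)}$, and $f$ realizes $(\alpha,W)$ by construction. The main obstacle, and the real content of the Thurston/CEG construction, is showing that the equivariant map defined leaf-by-leaf and plaque-by-plaque on the universal cover is continuous across the transition from leaves to plaques and hence descends to a well-defined continuous pleated map; this is precisely where the hyperbolicity of $\rho(\alpha)$ and the parabolicity of $\rho(\pi_1(\partial W))$ are used, ensuring that the spiraling endpoints of the non-closed leaves have well-defined limits in $\partial\mathbb{H}^3$ that match up correctly along each plaque boundary.
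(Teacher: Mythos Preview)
The paper does not give its own proof of this lemma; it simply cites Thurston \cite[Section 8.10]{thurston-notes} and Canary--Epstein--Green \cite[Theorem I.5.3.6]{CEG} and moves on. Your sketch is precisely the standard pleated-surface construction those references carry out, and it is correct in outline: extend $\alpha$ to an ideal triangulation of $\operatorname{int}(W)$, straighten each ideal triangle in $\mathbb{H}^3$ using the axis of $\rho(\alpha)$ and the parabolic fixed points of $\rho(\pi_1(\partial W))$, and observe that the resulting path-isometric map is $1$-Lipschitz. One small imprecision: the hyperbolic structure $X$ is not literally the pullback of the ambient metric (which would be degenerate along the bending locus) but rather the intrinsic path metric obtained by gluing the ideal triangles; with that correction, and taking the continuity and completeness verifications as the content of the cited references (as you do), your argument matches the intended one.
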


One may use Lemma \ref{realizations} and a result of Bers (\cite{bers-constant}, see also \cite[p.123]{buser}) 
to construct bounded length pants decompositions which include any fixed bounded length curve.

\begin{lemma}{bounded length pants}
Suppose that $\rho\in AH(S)$, $W$ is an essential subsurface of $S$ and $\alpha\in \mathcal{C}(W)$.
Given $L>0$, there is  $L'=L'(L,S)$  such that, if $\ell_{\rho}(\alpha)+\ell_{\rho}(\partial W)\leq L$, then
$W$ admits a pants decomposition $p$ containing $\alpha$ such that $\ell_{\rho}(p)\leq L'$.
\end{lemma}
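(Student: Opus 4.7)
The plan is to combine a $1$-Lipschitz realization of $(\alpha,W)$ in $N_\rho$ with Bers' classical bound on pants decompositions of finite-area hyperbolic surfaces, as hinted at in the paragraph preceding the statement.

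First, I would produce a complete finite-area hyperbolic structure $X$ on $\mathrm{int}(W)$ and a $1$-Lipschitz map $f\colon X\to N_\rho$ in the homotopy class of $\rho|_{\pi_1(W)}$ that realizes $(\alpha,W)$: the map $f$ sends the $X$-geodesic representative of $\alpha$ isometrically onto $\alpha^*$, so $\ell_X(\alpha)=\ell_\rho(\alpha)\leq L$, and the cusps of $X$ correspond to the components of $\partial W$. When $\rho(\pi_1(\partial W))$ is purely parabolic, this is exactly Lemma~\ref{realizations}. In the general case, where components of $\partial W$ are merely short (of length at most $L$), I would pass to a nearby representation $\rho'$ in which those short curves have been pinched to cusps with uniformly controlled bilipschitz distortion away from the Margulis tubes, apply Lemma~\ref{realizations} to $\rho'$, and pull the resulting pleated surface back to $N_\rho$; alternatively, one may invoke the standard generalization of Thurston's pleated surface construction in which the cusps of $X$ wrap around the $\epsilon_3$-Margulis tubes of the non-parabolic components of $\partial W$, producing a map that is $1$-Lipschitz on the portion of $X$ relevant to the rest of the argument.

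Second, since $\alpha$ is a simple closed geodesic in $X$ with $\ell_X(\alpha)\leq L$, I would cut $X$ along $\alpha$ into one or two complete finite-area hyperbolic surfaces and apply Bers' theorem \cite{bers-constant,buser} to each piece, obtaining a pants decomposition of each of total $X$-length at most $B=B(L,S)$ depending only on $L$ and the topological type of $S$. Taking the union with $\alpha$ yields a pants decomposition $p$ of $W$ containing $\alpha$ with $\ell_X(p)\leq L+B$. Pushing forward via $f$, which is $1$-Lipschitz on the thick part of $X$ containing these curves, gives $\ell_\rho(c)\leq\ell_X(c)$ for each component $c$ of $p$, so $\ell_\rho(p)\leq L+B=: L'(L,S)$.

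The hard part is the Lipschitz realization of the first step when some component of $\partial W$ is short but not parabolic, since Lemma~\ref{realizations} as stated requires parabolic cuffs; one needs either the generalized pleated surface construction allowing bounded-length cuffs or a controlled pinching/drilling argument to proceed, after which the rest is a routine application of Bers' theorem.
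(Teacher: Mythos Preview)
Your approach is exactly what the paper intends; the paper itself gives only the one-sentence hint preceding the statement and no explicit proof. The difficulty you flag when $\partial W$ is short but not parabolic has a lighter resolution than pinching or drilling: take the pleated surface on all of $S$ rather than on $W$, with pleating locus containing the multicurve $\alpha\cup\partial W$ (the Thurston/CEG result underlying Lemma~\ref{realizations} realizes any multicurve whose components have hyperbolic image). Since $\partial S$ is automatically parabolic for $\rho\in AH(S)$, there is no boundary hypothesis to check. On the resulting hyperbolic structure $X$ on $S$, the curve $\alpha$ and each component of $\partial W$ are geodesics of length $\ell_\rho(\alpha)$ and $\ell_\rho(\partial W)$ respectively; now cut $X$ along $\partial W\cup\alpha$ and apply Bers' theorem to the pieces exactly as in your second step. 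Alternatively, the standard theory already provides pleated surfaces on $W$ with geodesic (rather than cusped) boundary mapping onto the geodesic representatives of $\partial W$, so one may work directly on $W$ without any detour.
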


\subsection{Geometric limits}

A sequence $\{\Gamma_n \}$ of Kleinian groups {\em converges geometrically} to a Kleinian group $\hat\Gamma$
if every accumulation point $\gamma$ of every sequence \hbox{$\{ \gamma_n\in \Gamma_n\}$} lies in $\hat\Gamma$ 
and if every element $\alpha$ of $\Gamma_\infty$ is the limit of a sequence \hbox{$\{ \alpha_n\in \Gamma_n\}$}. 
It is useful, to think of geometric convergence of a sequence of torsion-free Kleinian groups, in terms 
of geometric convergence of the sequence of hyperbolic 3-manifolds. The following result combines
standard results about geometric convergence which will be used in the paper.

\begin{lemma}{comparisons respect tubes}
Suppose that $\{\rho_n:\pi_1(S)\to {\rm PSL}(2,\mathbb C)\}$ is a sequence of
discrete faithful representations converging to the discrete faithful representation
$\rho:\pi_1(S)\to {\rm PSL}(2,\mathbb C)$.
Then, there exists a subsequence $\{\rho_j\}$ so that $\{\rho_j(\pi_1(S))\}$
converges geometrically to $\hat \Gamma$. 

Let $\hat N=\mathbb H^3/\hat\Gamma$ and let $\pi:N_\rho\to\hat N$ be the natural
covering map. Let
$\hat{\mathcal{H}}$ be a precisely
invariant system of horoballs for $\hat\Gamma$.

There exists a  nested sequence $\{Z_j\}$ of compact sub-manifolds
exhausting $\hat N$ and $K_j$-bilipschitz smooth embeddings $\psi_j:Z_j\to N_{\rho_j}$ such that:
\begin{enumerate}
\item
$K_j\to 1$.
\item
If $V$ is a  compact component of $\partial\hat N^0$, then, for all large enough $n$,
$\psi_j(\partial V)$ is the boundary of a Margulis region for $N_{\rho_j}$.
\item 
If $Q$ is a compact subset of a non-compact component of $\partial\hat N^0$, then, for all
large enough $j$, $\psi_j(Q)$ is contained in the boundary of a Margulis region $V_j$
for $N_{\rho_j}$ and $\psi_j(Z_j\cap \hat N^0)$ does not intersect $V_j$.
\item
If $X$ is a finite complex and $h:X\to N_\rho$ is continuous, then, for all
large enough $j$, $(\psi_j\circ \pi\circ h)_*$ is conjugate to $\rho_j\circ \rho^{-1}\circ h_*$.
\end{enumerate}
\end{lemma}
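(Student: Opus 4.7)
The lemma packages several standard facts about geometric convergence; my plan is to assemble them in order. First I would invoke the classical theorem, originating with J\o rgensen and J\o rgensen-Marden, that any algebraically convergent sequence of discrete faithful Kleinian representations admits a subsequence that also converges geometrically, with the algebraic limit $\rho(\pi_1(S))$ contained in the geometric limit $\hat\Gamma$. This is a compactness statement for Kleinian groups in the Chabauty topology, once one fixes a convergent sequence of basepoints. Next, by the standard characterization of geometric convergence of pointed hyperbolic $3$-manifolds (as in Benedetti-Petronio), there exist basepoints $\hat x\in\hat N$ and $x_j\in N_{\rho_j}$ so that for each $R>0$ and $\epsilon>0$, a $(1+\epsilon)$-bilipschitz smooth embedding of $B_{\hat N}(\hat x,R)$ into $N_{\rho_j}$ sending $\hat x$ to $x_j$ exists for all sufficiently large $j$. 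A diagonal extraction then yields an exhaustion $\{Z_j\}$ of $\hat N$ together with $K_j$-bilipschitz embeddings $\psi_j$ with $K_j\to 1$, establishing item~(1). Item~(4) is essentially built into the construction: lifting $\psi_j$ to the universal cover, we may arrange it to be a near-isometry of $\mathbb{H}^3$ conjugating elements of $\hat\Gamma$ whose translation length is bounded on the region of interest to nearby elements of $\rho_j(\pi_1(S))$; restricted to the subgroup $\rho(\pi_1(S))\subset\hat\Gamma$, this conjugation realizes $\rho_j\circ\rho^{-1}$ up to global conjugation in $\mathrm{PSL}(2,\mathbb{C})$, which is exactly the content of~(4) after passing to homotopy classes.

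For items~(2) and~(3) I would argue using the Margulis lemma. Fix the precisely invariant horoball system $\hat{\mathcal{H}}$ for $\hat\Gamma$ strictly inside the $\epsilon_3$-thin part of $\hat N$, so that $\partial\hat N^0$ lies a definite distance outside the $\epsilon_3$-Margulis region boundaries. A compact component $V$ of $\partial\hat N^0$ is a torus bounding a rank-$2$ cusp coming from a $\mathbb{Z}^2$-parabolic subgroup $P\subset\hat\Gamma$; geometric convergence supplies, for large $j$, generators in $\rho_j(\pi_1(S))$ approximating those of $P$, with traces uniformly close to $\pm 2$, and so they either generate a rank-$2$ parabolic or fix a short closed geodesic in $N_{\rho_j}$. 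In either case they bound a Margulis region whose boundary is uniformly close to $\psi_j(V)$; a small ambient isotopy replaces $\psi_j$ by an embedding sending $V$ precisely onto this boundary without essentially changing the bilipschitz constant. The same argument handles rank-$1$ cusps in~(3) using compact subsets $Q$ of the non-compact annular cross-sections, and the disjointness $\psi_j(Z_j\cap\hat N^0)\cap V_j=\emptyset$ follows from the buffer between $\hat{\mathcal H}$ and the $\epsilon_3$-thin part together with $K_j\to 1$.

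The main obstacle is the simultaneous matching required in~(2) and~(3): one must choose the horoball system $\hat{\mathcal H}$, the exhaustion $\{Z_j\}$, and the bilipschitz approximations coherently, so that the near-parabolic or short-geodesic elements in $\rho_j(\pi_1(S))$ really produce Margulis regions whose boundary components coincide, after a small ambient isotopy, with the $\psi_j$-images of the corresponding components of $\partial\hat N^0$. This balances the bilipschitz constant $K_j\to 1$ against the definite geometric thickness of the $\epsilon_3$-Margulis regions; everything else is formal once the Chabauty-convergent subsequence has been extracted.
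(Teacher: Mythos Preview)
Your sketch is essentially sound, but note that the paper does not actually prove this lemma at all: it simply assembles citations. The existence of the geometrically convergent subsequence is referred to Canary--Epstein--Green \cite[Thm.~3.1.4]{CEG}; the exhaustion $\{Z_j\}$ and the $K_j$-bilipschitz comparison maps with $K_j\to 1$ come from \cite[Thm.~3.2.9]{CEG}; properties~(2) and~(3) on matching Margulis region boundaries are quoted from Brock--Canary--Minsky \cite[Lemma~2.8]{ELC2}; and property~(4) is taken from \cite[Prop.~2.7]{ELC2} (see also Anderson--Canary \cite[Lemma~7.2]{ACcores}). So the paper treats the lemma as a black-box summary of known results, whereas you outline the actual mechanisms (Chabauty compactness, the Benedetti--Petronio-style description of geometric convergence, the Margulis lemma plus an ambient isotopy). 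Your approach is more self-contained and shows you understand why each part holds; the paper's approach is the appropriate one for a research article, since none of these facts is new and full proofs would be a lengthy digression.

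One small caution on your treatment of~(2): since $\pi_1(S)$ contains no $\mathbb{Z}^2$, the approximating elements in $\rho_j(\pi_1(S))$ can never generate a rank-$2$ parabolic group; the Margulis region in $N_{\rho_j}$ is always a solid torus about a short geodesic. Your dichotomy is harmless but the parabolic alternative never occurs. More substantively, the step where you isotope $\psi_j$ so that $\psi_j(\partial V)$ lands \emph{exactly} on a Margulis region boundary, while keeping the bilipschitz constant under control and preserving~(4), is precisely the content of \cite[Lemma~2.8]{ELC2} and is not entirely trivial; if you were writing this out in full you would need to justify that the required isotopy has uniformly small support and distortion.
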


\begin{proof}{}
The existence of the subsequence $\{\rho_j\}$ is guaranteed by Canary-Epstein-Green
\cite[Thm. 3.1.4]{CEG}. The existence of the sub-manifolds  $\{Z_n\}$ and the comparison
maps $\{\psi_n\}$  with property (1) is given by \cite[Thm. 3.2.9 ]{CEG}. Properties (2) and (3) are obtained by
Brock-Canary-Minsky \cite[Lemma 2.8]{ELC2}. Property (4) is observed in \cite[Prop. 2.7]{ELC2}, see also
Anderson-Canary \cite[Lemma 7.2]{ACcores}.
\end{proof}

\section{The wrapping multicurve}
\label{wrap1}

In this section, we analyze how compact cores for algebraic limits immerse into geometric limits. We will
see that if $\{\rho_n\}\subset AH(S)$ converges algebraically to $\rho$ and $\{\rho_n(\pi_1(S))\}$ converges geometrically to $\hat\Gamma$,
then there is a level surface $F\subset N^0_\rho$ and a collection $Q$ of incompressible annuli in $F$ so
that the covering map \hbox{$\pi:N_\rho^0\to \hat N^0$} is an embedding on $F-Q$ and (non-trivially) wraps each component of
$Q$ around a toroidal component of $\partial\hat N^0_\rho$. The collection  $q$ of  core curves of $Q$ is called
the wrapping multicurve and we will define a wrapping number associated to each component of $q$ which
records how many times the surface wraps the associated annulus around the toroidal component of 
$\partial\hat N^0_\rho$.

\subsection{Wrapped surfaces}
\label{define wrapping numbers}

We first examine the topology of the situation.
Given a compact non-annular surface $G$ and $e\in\CC(G)$, let  $E=\collar(e)$ be an open collar neighborhood of $e$ on $G$,
$\hat G=G-E$, 
$$X= G \times [-1,1]\ \ \ {\rm and}\ \ \ \hat X= X - V\ \ \ {\rm where}\ \ \
V=  E \times (-\frac12, \frac12) \subset X$$
is a solid torus in the homotopy class of $e$. 
If $T=\partial V$ and \hbox{$\hat Z = \hat G\times\{0\} \cup T$}, then $\hat Z$ is a spine for $\hat X$.
An orientation on $G$ determines an orientation on $X$ and hence on $V$ which induces an orientation on $T$.
Let $m$ be an essential curve on $T$ that bounds a disk in $V$ and let $l$ be one of the components of 
$\partial\bar E \times \{0\}$. We orient this meridian and longitude so that the orientation of $(m,l)$ 
agrees with the orientation of $T$. We also decompose $T$ into two annuli with 
$$A = \partial \bar E \times [0,1/2] \cup E \times \{1/2\}\ \ \ {\rm and}\ \ \ \ B = \partial \bar E \times [-1/2,0] \cup E \times \{-1/2\}.$$

We will show that every map from $G$ to $\hat X$ that is homotopic, in $X$, to a level inclusion,
is homotopic, in $X$ to exactly one of a family $\{f_k:G \to \hat X\}_{k \in \IZ} $ of standard wrapping maps. 
Let $f_1:G \to X$ be an embedding such that the restriction of $f_1$ to $\hat G$ is $id \times \{0\}$, i.e.
$f_1(x)=(x,0)$ if $x\in \hat G$, and  $f_1|_{\bar E}$ is a homeomorphism to $A$.
For all $k\in\IZ$, let  $\phi_k : T \to T$ be an immersion which is the identity on $B$ and wraps $A$ ``k times around''
$T$, namely $(\phi_k)_*(m)=km$ and $(\phi_k)_*(l)=l$.
We then define $f_k :G \to \hat Z  \subset X$ by $f_k|_{\hat G} = f_1|_{\hat G}$ and $f_k|_{\bar E} = \phi_k \circ f_1|_{\bar E}$. 
Note that all of these maps are homotopic as maps to $X$. 
As maps to $\hat Z$ (or $\hat X$) they are homotopically distinct as can be seen by 
counting the algebraic intersection with a point on $A$ and a point on $B$. 
We will call $k$ the {\em wrapping number} of $f_k$.

The next lemma allows us to define a wrapping number for any map in the correct homotopy class.

\begin{lemma}{wrapping}
Let $g: (G, \partial G) \to (\hat X, \partial G\times [-1,1])$ be a map such that
$g$ is homotopic to $id\times \{0\}$, as a map into $(X,\partial G\times [-1,1])$.
Then there exists a unique $k \in \Z$ such that $g$ is homotopic to $f_k$ 
as a map into $(\hat X, \partial G\times [-1,1])$.
\end{lemma}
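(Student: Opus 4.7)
The plan is to prove existence by homotoping $g$ into the spine $\hat Z$ and identifying its restriction to $\bar E$ with $\phi_k\circ f_1|_{\bar E}$ for a unique integer $k$, and to prove uniqueness via an algebraic intersection count against a generic point of $A$.

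For existence, I would first homotope $g$, rel $\partial G\times[-1,1]$, so that its image lies in $\hat Z=\hat G\times\{0\}\cup T$, using that $\hat Z$ is a deformation retract of $\hat X$. By transversality with respect to the one-manifold $\partial\bar E\times\{0\}$ where the two pieces of $\hat Z$ meet, I can arrange that $g^{-1}(T)$ is a finite disjoint union of simple closed curves in $\interior(G)$. The hypothesis provides that $g$ is $X$-homotopic to $id\times\{0\}$, so $g_\ast:\pi_1(G)\to\pi_1(\hat X)$ is injective (it factors the isomorphism $\pi_1(G)\to\pi_1(X)$). Standard innermost-disk arguments in $\hat X$, together with incompressibility of $T$ in $\hat X$, then eliminate components of $g^{-1}(T)$ that are inessential in $G$ or null-homotopic in $T$, leaving only essential curves parallel to $e$ in $G$. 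Further moves collapsing parallel annular pieces in $\bar E$ that map to $T$ reduce us to the case $g^{-1}(T)=\partial\bar E$, with $g|_{\hat G}:\hat G\to\hat G\times\{0\}$ homotopic to the standard inclusion.

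With this normalization in place, $g|_{\bar E}$ is a map of pairs $(\bar E,\partial\bar E)\to(T,\partial\bar E\times\{0\})$ sending boundary circles into the longitude class of $l$. Lifting to the cyclic cover $\tilde T\to T$ corresponding to $\langle l\rangle\le\pi_1(T)$ (so that $\tilde T\cong\R\times S^1$ with the $\R$-factor unwrapping $m$), the restriction $g|_{\bar E}$ lifts uniquely up to deck translation, and the signed integer difference of the $\R$-heights of the two lifted boundary components is an integer $k\in\Z$. By construction $f_k|_{\bar E}$ realizes exactly this $k$, so a straight-line homotopy between the two lifts in $\tilde T$, projected back down and extended by the identity on $\hat G\times\{0\}$, produces a homotopy from $g$ to $f_k$ in $\hat X$ rel $\partial G\times[-1,1]$.

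For uniqueness, I would use the algebraic intersection number with a generic point $p_A\in\interior(A)$: this count is a homotopy invariant of maps into $\hat X$, and an explicit calculation with the model gives $f_k\cdot p_A=k$ (while $f_k\cdot p_B=k-1$ at an interior point of $B$), so distinct values of $k$ give pairwise non-homotopic $f_k$. The main obstacle I anticipate is the normalization step for $g^{-1}(T)$: the hypothesis supplies only an $X$-homotopy, not an $\hat X$-homotopy (which would essentially be the desired conclusion), so the innermost-disk and parallel-annulus reductions must be carried out inside $\hat X$ using only the $\pi_1$-injectivity of $g$ and the topology of $T\subset\hat X$, with care to avoid inadvertently modifying the wrapping number of $g|_{\bar E}$ during the cleanup.
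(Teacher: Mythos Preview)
Your overall strategy is close to the paper's, though the normalization route differs: rather than making $g$ transverse to $T$ and doing innermost-disk surgery on $g^{-1}(T)$, the paper first homotopes $g|_{\hat G}$ directly to the level inclusion (the $X$-homotopy of $g$ to $id\times\{0\}$ restricts to a homotopy of $g|_{\hat G}$ that can be kept inside $\hat X$, since $\hat G$ avoids the deleted solid torus), and then pushes the remaining annulus $g|_{\bar E}$ into $T$ using that every immersed incompressible annulus in $\hat X$ with boundary in $T$ is homotopic rel boundary into $T$. This is quicker and sidesteps the cleanup issues you flag at the end. Your uniqueness argument via algebraic intersection with a point of $A$ agrees with the paper's.

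There is, however, a genuine gap in your existence argument. Your cyclic-cover computation correctly extracts the $m$-winding number $k$, but it does not determine $g|_{\bar E}$ up to homotopy rel boundary. In $\tilde T\cong\R\times S^1$ there is no ``straight-line homotopy'' in the $S^1$-factor: two maps $(\bar E,\partial\bar E)\to\tilde T$ agreeing on $\partial\bar E$ and having the same $\R$-height difference can still differ by a full twist in the $S^1$-direction, and homotopy classes of maps $(\bar E,\partial\bar E)\to S^1$ rel boundary form a $\Z$-torsor. Concretely, at this stage you only know $g\simeq f_k\circ D^j$ in $\hat X$ for some $j\in\Z$, where $D$ is the right Dehn twist about $E$. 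The paper closes this by invoking the $X$-homotopy hypothesis a second time: since every $f_k$ is homotopic to $id\times\{0\}$ in $X$, one gets $id\times\{0\}\simeq g\simeq f_k\circ D^j\simeq (id\times\{0\})\circ D^j$ in $X$, which forces $D^j$ to be homotopic to the identity on $G$ and hence $j=0$. Without this step your homotopy from $g$ to $f_k$ is not justified.
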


\begin{proof} 
Since $\hat Z$ is a spine of $\hat X$, we may assume that the image of $g$ lies in $\hat Z$. 
Since $g$ is homotopic to the level inclusion $id\times\{0\}$ on $\hat G$, we may homotope $g$ within
$\hat X$ so that $g$ agrees with the level inclusion on $\hat G$. Since every immersed incompressible annulus
in $\hat X$ with boundary in $T$ is homotopic, rel boundary, into $T$,
we can further homotope $g$, rel $\hat G$, such that $g(E) \subset T$. 
A simple exercise shows that any map of $E$ to $T$ that agrees with $id\times \{0\}$ on
$\partial E$ is homotopic to  the composition of $\phi_k$, for some $k$, and some power of a Dehn twist about $E$.
Since $g$ is homotopic to $id\times \{0\}$ within $X$, the Dehn twist is un-necessary, so
$g$ is homotopic to $f_k$ in $(\hat X,\partial G\times [-1,1])$.
\end{proof}

We recall that we will be considering the case where $\{\rho_n\}\subset AH(S)$ converges to $\rho$, $\{\rho_n(\pi_1(S))\}$
converges to $\hat\Gamma$,
and there is a level surface $F\subset N^0_\rho$ and a collection $E$ of incompressible annuli in $F$ so
that the covering map $\pi:N_\rho^0\to \hat N^0$ is an embedding on $F-E$ and (non-trivially) wraps each component of
$E$ around a toroidal component of $\partial\hat N^0_\rho$. We also have, for large enough $n$, a 2-bilipschitz map
\hbox{$\psi_n:\hat N \to N_{\rho_n}$} defined on a regular neighborhood of $\pi(F)$ so that each component of $\psi_n(\pi(E))$ bounds
a Margulis tube in $N_{\rho_n}$. The following lemma gives information about the image of a meridian  of a component of $\pi(E)$

\begin{lemma}{dehn twist}
Let $G$ be a compact surface, $e\in\CC(G)$ and let $\hat Z$ be the spine for $\hat X$ constructed above.
Suppose that  $\psi: \hat Z \to M$ is an embedding  into a 3-manifold $M$ such that
$\psi(T)$ bounds a solid torus $U$ disjoint from $\psi(\hat Z)$, and
$\psi(l)$ is homotopic to the core curve of $U$.

Then there exists $s\in\IZ$ such that
\begin{enumerate}
\item
$\psi(m + sl)$ bounds a disk in $U$,
\item
$\psi \circ f_0:G\to M$ is homotopic to $\psi \circ f_k \circ D^{ks}$ for all $k$, and
\item
$\psi \circ f_1$ is homotopic to $ \psi \circ f_k \circ D^{(k-1)s}$ for all $k$ 
\end{enumerate}
where $D:G\rightarrow G$ is a right Dehn twist about $E$.
\end{lemma}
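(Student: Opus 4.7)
The plan is to first establish (1) by elementary topology of solid tori, then to deduce (2) and (3) by computing the rel-endpoint class of $\psi\circ f_k\circ D^{j}(\tau)$ for a transversal arc $\tau$ to $E$, and finally to invoke a rel-boundary homotopy principle for annulus maps into a $3$-manifold.

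For (1), the meridian of $U$ is the (up to sign) unique primitive class in $\pi_1(\partial U)$ bounding a disk in $U$, and it forms a $\Z$-basis of $\pi_1(\partial U)=\Z^{2}$ together with any longitude of $U$. Via the embedding $\psi|_{T}$, identify $\pi_1(\partial U)$ with $\pi_1(T)=\Z\langle m\rangle\oplus\Z\langle l\rangle$. Since $\psi(l)$ is homotopic to the core of $U$, it is a longitude of $U$, so the meridian pulls back to some class $am+bl$ with $am+bl$ and $l$ generating $\Z^{2}$, forcing $a=\pm 1$. Fixing the sign $a=1$ yields a unique $s\in\Z$ with $\psi(m+sl)$ bounding a disk in $U$.

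For (2) and (3), fix a transversal arc $\tau\subset \bar E$ joining the two components of $\partial\bar E$ and meeting $e$ transversely once. Since $D$ is supported in $\interior\bar E$, all maps under consideration agree on $\hat G$, hence on $\partial\bar E$. A right Dehn twist $D$ about $e$ modifies $\tau$ by concatenation with $e$ (in the appropriate orientation); combined with $f_k(e)=(\phi_k)_*(l)=l$ and with the observation that the restriction of $f_1$ to $\tau\cap\bar E$ traces the arc in $A$ representing one meridian of $T$, so that $f_k(\tau\cap\bar E)$ represents $km$ rel endpoints (by $(\phi_k)_*(m)=km$), the rel-endpoints class of $f_k\circ D^{j}(\tau)$ in $\pi_1(T,*)$ is $km+jl$. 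Applying $\psi_*$ and using (1), which gives $\psi_*(m+sl)=1$ in $\pi_1(M)$, we obtain
\[
\psi_*\bigl(f_k\circ D^{j}(\tau)\bigr)=\psi_*\bigl((j-ks)\,l\bigr)\quad\text{in }\pi_1(M).
\]
This is trivial and so matches $\psi\circ f_0(\tau)$ precisely when $j=ks$, and it equals $\psi_*(m)$ and so matches $\psi\circ f_1(\tau)$ precisely when $j=(k-1)s$.

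To promote this agreement of transversal arcs to rel-$\partial\bar E$ homotopies of the annulus maps $\bar E\to M$, I will invoke a standard principle: two maps $g_0,g_1\colon\bar E\to M$ agreeing on $\partial\bar E$ are homotopic rel $\partial\bar E$ if and only if for one (equivalently every) transversal arc $\tau$, the arcs $g_0(\tau)$ and $g_1(\tau)$ are homotopic rel endpoints in $M$. This is a single-obstruction statement: the doubled map $g_0\cup g_1\colon\bar E\cup_{\partial}\bar E=T^{2}\to M$ extends over the solid torus $\bar E\times I$, giving the rel-boundary homotopy, precisely when its meridian, which traces out $g_0(\tau)\cdot g_1(\tau)^{-1}$, is nullhomotopic in $M$. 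Applying this to the pairs $(\psi\circ f_0,\psi\circ f_k\circ D^{ks})$ and $(\psi\circ f_1,\psi\circ f_k\circ D^{(k-1)s})$, and noting that all four maps agree on $\hat G$, yields the required homotopies of maps $G\to M$. The main obstacle is careful sign and basepoint bookkeeping in the transversal computation; once the conventions under which $D$ shifts $\tau$ by $+e$, $f_k$ sends $e$ to $+l$, and $f_k$ sends the transversal to $+km$ are pinned down, parts (2) and (3) follow immediately from (1) and the solid-torus extension principle.
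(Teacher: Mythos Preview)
Your argument for (1) is correct and matches the paper's. Your approach to (2) and (3) via rel-endpoint classes of a transversal arc is sound and genuinely different from the paper's, but the ``solid-torus extension principle'' you invoke has a gap as stated. The claim that a map $T^2\to M$ extends over a solid torus precisely when its meridian is nullhomotopic in $M$ is false for general $3$-manifolds: after filling a meridian disk one must still extend over a $3$-ball, and the remaining obstruction lives in $\pi_2(M)$. Since the lemma is stated for an arbitrary $3$-manifold $M$, this step does not go through without further argument.

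The fix is easy and already implicit in your computation. Both annulus maps $\psi\circ f_k\circ D^{j}|_{\bar E}$ land in $\psi(T)=\partial U$, and you have in fact shown that in $\pi_1(T)$ the difference of the transversal classes is an integer multiple of $m+sl$ (namely $-k(m+sl)$ for (2) and $(1-k)(m+sl)$ for (3)), hence nullhomotopic already in the solid torus $\bar U$. Since $\bar U$ is aspherical, the doubled torus map extends over the solid torus inside $\bar U$, and the resulting rel-$\partial\bar E$ homotopy then includes into $M$. With this modification your proof is complete.

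For comparison, the paper avoids the $\pi_2$ issue entirely by a different device: it first treats the case $s=0$ by extending $\psi$ across $V$ (using that $\psi(m)$ bounds in $U$) to a map $\bar\psi\colon X\to M$, and then simply pushes forward the homotopy $f_0\simeq f_k$ that already exists in $X$. For general $s$ it defines a self-map $h\colon\hat Z\to\hat Z$ which is the identity off $A$ and applies $D_A^{-s}$ on $A$, so that $(\psi\circ h)(m)$ bounds in $U$; one then checks that $h\circ f_k\simeq f_k\circ D^{ks}$ (an $s$-fold twist on each of $k$ parallel subannuli of $E$), reducing to the $s=0$ case. Your transversal computation is more hands-on and makes the role of the meridian relation transparent; the paper's reduction is slicker but hides the arithmetic inside the identity $h\circ f_k\simeq f_k\circ D^{ks}$.
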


\begin{proof}
Since $\psi(l)$ is homotopic to the core curve of $U$, it is a longitude for $U$. So, the meridian $m_U$ for $U$ will intersect $\psi(l)$ 
exactly once. Therefore, the pre-image $\psi^{-1}(m_U)$ of the meridian will intersect $l$ exactly once and must be of 
the form $m + sl$ for some $s\in \Z$. 

If $s=0$ then (2) and (3) hold,  since we may extend $\psi$ to an embedding $\bar\psi:X\to M$ and $f_0$ is homotopic to $f_k$
within $X$ for all $k$.

We now define a map $h:\hat Z \to \hat Z$  which allows us to reduce to the $s=0$ case.
Let $h$ be the identity on $\hat Z-A$ and let $h|_A=D_A^{-s}$ where $D_A$ is the right Dehn twist about the core curve of $A$ so that $ h_*(m)=m+ sl $.
Then $\psi\circ h:\hat Z\to M$ is an embedding so that $\psi\circ h(T)$ bounds $U$ and $\psi\circ h(m)$ bounds a disk in $U$.
Therefore, for any $k$, $\psi \circ h\circ f_0$ is homotopic to $\psi \circ h\circ  f_k$. The $f_k$-pre-image of $A$ in $G$ is a collection of $k$ parallel annuli and the map $h \circ f_k$ is equal to pre-composing $f_k$ with $s$ Dehn twists in each of the $k$ annuli. As $s$ Dehn twists in $k$ parallel annuli is homotopic to $ks$ Dehn twists in a single annulus we have that $h\circ f_k:G\to Z$ is homotopic to $f_k\circ D^{ks}$ for all $k$.
Properties (2) and (3) follow immediately.
\end{proof}

\subsection{Wrapping multicurves and wrapping numbers }
\label{wrap numbers}

In this section, we analyze how compact cores for algebraic limits immerse into geometric limits.
We identify the wrapping multicurve and produce a level surface
in the algebraic limit whose projection to the geometric limit is embedded off of a collar neighborhood
of the wrapping multicurve. At the end of the section, we define the wrapping numbers  of the wrapping
multicurves.

\begin{proposition}{limit set machine}
Suppose that $\{\rho_n\}\subset AH(S)$,
$\lim \rho_n=\rho$, and $\{\rho_n(\pi_1(S))\}$ converges geometrically to $\hat\Gamma$. Let
$\hat N=\Ht/\hat\Gamma$ and let $\pi:N_\rho\to \hat N$ be the obvious covering map. 
There exists a level surface $F$ in $N_\rho$, a multicurve $q=\{q_1,\ldots,q_r\}$ on $F$,  and
an open collar neighborhood $Q=\collar(q)\subset F$, so that
\begin{enumerate}
\item
$\pi$ restricts to an embedding on $F-Q$.
\item
$l_\rho(q)=0$ and if $Q_i$ is the component of $Q$  containing  $q_i$, 
then $\pi|_{Q_i}$ is an immersion, which is not an embedding, into  the boundary $T_i$ of a
cusp region $V_i$. 
\item
If $\hat J$ is a (closed) regular neighborhood of $\pi(F)$ in $\hat N^0$, then $\hat J$ is homeomorphic to
$F\times [-1,1]\setminus (Q\times (-\frac{1}{2},\frac{1}{2}))$ and $\partial_1J=\partial\hat J-\pi(\partial N_\rho^0)$ is 
incompressible in $\hat N^0$.
In particular, $\pi_1(\hat J)$ injects into $\hat\Gamma$.
\item \label{order}
If $d$ is a downward-pointing parabolic in $N_\rho$,
then \hbox{$i(d,q)=0$}. Moreover, if $d$ is not a component of $q$ and $c$ is
a curve  in $S$ which intersects $d$, then the geodesic representative $c^*$ lies above $d^*$ in $N_{\rho_n}$ for
all large enough $n$.

Analogously, if $d$ is an upward-pointing parabolic in $N_\rho$,
then \hbox{$i(d,q)=0$}. Moreover, if $d$ is not a component of $q$ and $c$ is
a curve  in $S$ which intersects $d$, then the geodesic representative $c^*$ lies below $d^*$ in $N_{\rho_n}$ for
all large enough $n$.
\item
If there is a compact core for $N_\rho$ which embeds, under $\pi$, in $\hat N$,
then $q$ is empty. 
\end{enumerate}
\end{proposition}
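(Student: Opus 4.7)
The strategy is to identify the multicurve $q$ from the algebraic comparison between $\rho(\pi_1(S))$ and $\hat\Gamma$, construct the level surface $F$ explicitly so that its only failures of embedding under $\pi$ occur in prescribed wrapping annuli, and then read off the remaining properties from the construction.

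First I would apply Lemma \ref{comparisons respect tubes} to extract $K_j$-bilipschitz embeddings $\psi_j : Z_j \to N_{\rho_j}$ with $K_j \to 1$ and $Z_j$ exhausting $\hat N$, compatible with $\pi : N_\rho \to \hat N$ via property (4). For each parabolic curve $c$ in $N_\rho$, the element $\rho(c)$ lies in a unique maximal parabolic subgroup $P_c \subset \hat\Gamma$, and the cusp region $V_c^\rho$ covers the corresponding cusp region $V_c^{\hat N} \subset \hat N$ with degree $[P_c:\rho(\langle c\rangle)]$. I would let $q$ consist of the parabolic curves $c$ for which $\rho(\langle c\rangle)\subsetneq P_c$; in this situation $P_c$ is a rank-two parabolic, $V_c^{\hat N}$ has torus boundary $T_c$, and the restriction $\partial V_c^\rho \to T_c$ is an infinite cyclic covering.

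Next I would construct $F$ by modifying the level surface $F_0 = S\times\{0\}$ in $N_\rho^1 \cong S\times \R$ (via Bonahon's theorem). Away from the cusps in $q$, the covering $V_c^\rho\to V_c^{\hat N}$ is a homeomorphism on cusp regions, so $F_0$ is already embedded there under $\pi$ and requires no modification. For each $c\in q$, I would replace the intersection of $F_0$ with $V_c^\rho$ by a compact annulus $Q_c\subset\partial V_c^\rho$ chosen so that $\pi(Q_c)$ lies in $T_c$ and realizes one of the standard wrapping immersions $f_k$ of the previous subsection, for some $k\in\Z$; setting $Q = \bigsqcup_i Q_i$ then gives the collar neighborhood $\collar(q) \subset F$.

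The main obstacle is property (1): showing $\pi|_{F-Q}$ is globally an embedding. I would transport the question to the approximating manifolds via $\psi_n$. By property (4) of Lemma \ref{comparisons respect tubes}, the composition $\psi_n \circ \pi|_F$ is homotopic in $N_{\rho_n}$, relative to the peripheral structure, to the level inclusion $S\times\{0\}\hookrightarrow N_{\rho_n}$, which is embedded. A hypothetical double point $\pi(x)=\pi(y)$ with $x,y\in F-Q$ distinct must, for all large $n$, lie inside $Z_n$ and hence persist under the embedding $\psi_n$. Using property (2) of the lemma, $\psi_n$ sends Margulis region boundaries to Margulis region boundaries, and because $x,y \notin Q$ the putative double point avoids every wrapped cusp; the only remaining possibility is a double point on a surface in $N_{\rho_n}$ homotopic to a level embedding, which is ruled out by an innermost-disk or intersection-parity argument.

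Properties (2)--(5) are then direct consequences. Property (2) is immediate from the construction of $Q_c$. For property (3), a regular neighborhood of $\pi(F)$ is a product away from $Q$ and, using Lemma \ref{wrapping} together with Lemma \ref{dehn twist}, near each $q_i$ it has the topology of a product minus a solid torus, giving the stated homeomorphism; incompressibility of $\partial_1\hat J$ in $\hat N^0$ then follows from incompressibility of $F$ in $N_\rho^0$ and the non-triviality of each wrapping. Property (4) uses that parabolics in a Kleinian surface group form a multicurve (so $i(d,q)=0$) and that a downward cusp region sits in the lower half of $S\times\R$, so its image under $\psi_n$ is a Margulis tube descending toward the bottom of $N_{\rho_n}$, forcing $c^*$ to lie above $d^*$ for every transverse $c$ and large $n$. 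Finally, property (5) is the contrapositive: an embedded compact core forces every cusp of $N_\rho$ to embed into $\hat N$, giving $\rho(\langle c\rangle)=P_c$ for every parabolic $c$, whence $q=\emptyset$.
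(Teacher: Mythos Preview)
Your proposal has the right architecture, but the argument for property~(1)---that $\pi|_{F-Q}$ is an embedding---does not go through, and this is precisely where the paper invokes a substantial external result that your approach lacks.

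You argue that a double point $\pi(x)=\pi(y)$ with $x,y\in F\setminus Q$ would persist under $\psi_n$, giving a double point of the immersion $\psi_n\circ\pi|_F:F\to N_{\rho_n}$, and that this is ``ruled out by an innermost-disk or intersection-parity argument'' because the map is homotopic to a level embedding. But an immersed surface homotopic to an embedding can certainly have double points; homotopy to an embedding imposes no local constraint. There is no innermost-disk argument here: the map $\psi_n\circ\pi|_F$ is an immersion of $F$ into a $3$-manifold, not a map of a disk, and nothing forces its double locus to be removable. The paper avoids this issue entirely by cutting $S$ along \emph{all} parabolic curves (the multicurve $a$, not just $q$) into subsurfaces $R_j$ whose associated subgroups $\Gamma^j=\rho(\pi_1(R_j))$ have purely peripheral parabolics, and then invoking \cite[Proposition~6.4]{ELC2} to obtain, for each $j$, a proper \emph{embedding} $h:R_j\to\hat N^0$ representing $\Gamma^j$. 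The level surface $F$ is then assembled by lifting these embedded pieces back to $N_\rho$ and joining them by annuli on cusp boundaries; the embedding on $F\setminus Q$ is built in from the start rather than deduced after the fact. Your construction, which starts from $S\times\{0\}$ and only modifies near $q$, gives no control over how the complementary pieces sit in $\hat N$.

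The treatment of property~(4) is also too thin. That $i(d,q)=0$ follows as you say, but the ordering claim (``$c^*$ lies above $d^*$ in $N_{\rho_n}$'') is not just a matter of the cusp sitting on the correct side of $F$. The paper first constructs an annulus in $\hat N^0$ from $\pi(d)$ to the cusp boundary whose interior misses $\pi(F)$---and rules out the obstruction that two distinct parabolics of $N_\rho$ might project to the same cusp of $\hat N$---then applies Otal's unknotting theorem \cite{otal-curves} together with Lemmas~2.7 and~2.9 of \cite{BBCM} to conclude that $\psi_n(\pi(R))$ is a level subsurface and that $d_n^*$ lies below it, and finally controls the track length of a homotopy from a bounded-length representative of $c$ to $c_n^*$. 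None of these steps is automatic from ``the cusp descends toward the bottom.''
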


We will call $q$ the {\em wrapping multicurve} of the triple $(\{\rho_n\},\rho,\hat\Gamma)$.
We say that a parabolic curve $d$ for  $\rho$ is an {\em unwrapped parabolic} for the triple
$(\{\rho_n\},\rho,\hat\Gamma)$ if it does not lie in the wrapping multicurve $q$.

\begin{proof}{}
Let $\widehat{\mathcal H}$ be an invariant collection of horoballs for the parabolic elements of $\hat\Gamma$ and 
let $\mathcal{H}$ be the subset of $\widehat{\mathcal H}$ consisting of horoballs based at fixed points of parabolic
elements of  $\rho(\pi_1(S))$.
Let 
$$\hat N_\rho^0=({\bf H}^3-\bigcup_{H\in\widehat{\mathcal{H}}} H)/\hat\Gamma\ \ \ {\rm and}\ \ \ N_\rho^0=({\bf H}^3-\bigcup_{H\in\mathcal{H}} H)/\rho(\Gamma)$$
and let $(M,P)$ be a relative compact core for $N_\rho^0$.

Let $A$ be a maximal collection of disjoint, nonparallel essential annuli in $(M,P)$ with one boundary
component in $P$.
Since one may identify $M$ with $S\times [-1,1]$ so that $\partial S\times [-1,1]$  is identified
with a collection of components of $P$, one may identify $A$ with $a\times [-1,1]$
where $a=\{q_1,\ldots,q_t\}$ is a disjoint
collection of simple closed curves on $S$. Let $R$ be the complement in $S$ of a collar
neighborhood of the multicurve $a$.
Let $\{ R_j\}$ be the components of $R$ and let
$\Gamma^j=\rho(\pi_1(R_j))$. Notice that an element of $\Gamma^j$ is parabolic if and only if
it is conjugate to  an element of $\rho(\pi_1(\partial R_j))$. Proposition 6.4 in \cite{ELC2} implies that there exists
a proper embedding $h:R_j\to \hat N^0$ such that $h_*(\pi_1(R_j))$ is conjugate to $\Gamma^j$
for each $j$. In particular, $h(\partial R_j)\subset\partial \hat N^0$.

We now construct $F$. For each $j$, let $F_j$ be a lift of $h(R_j)$ to $N_\rho$.
For each $i$, let $Q_i$ be the annulus in $\partial N_\rho^0$ joining two components of
$\bigcup \partial F_j$ whose core curve is homotopic to $q_i$. Then $F=\bigcup F_j\cup \bigcup Q_i$
is a level surface for $N^0_\rho$.

We re-order $\{q_1,\ldots,q_r,q_{r+1},\ldots, q_t\}$ so that if $i\le r$, then $\pi|_{Q_i}$ is not
an embedding, while if $i>r$, then $\pi|_{Q_i}$ is an embedding. Let $q=\{q_1,\ldots,q_r\}$ and
$Q=Q_1\cup\cdots\cup Q_r$. Conditions (1)  and (2) are satisfied by construction.

Let $\hat J$ be a  (closed) regular neighborhood of $\pi(F)$ in $\hat N^0$.
By construction, $\hat J$ is homeomorphic to 
$F\times [-1,1]\setminus (Q\times (-\frac{1}{2},\frac{1}{2}))$. We first
prove that  
$$\partial_1 \hat J=\partial\hat J-\pi(\partial\hat N_\rho^1)\cong F\times\{-1,1\}$$
is incompressible in  $\hat N^0$.
Since $\partial_1 \hat J$ is clearly  incompressible in $\hat J$,
we only need to check that $\partial_1 \hat J$ is incompressible in $\hat N^0-{\rm int}(\hat J)$. 
Each component $E$ of $\partial_1\hat J$  is homeomorphic to $S$. If $E$ is not incompressible in 
$\hat N^0-\hat J$, then
there exists an embedded disk $D$ in $\hat N^0-{\rm int}(\hat J)$ which is bounded by a 
homotopically non-trivial curve in $\partial_1\hat J$.

By Lemma \ref{comparisons respect tubes}, there exists, for all large enough $n$, 
$Z_n$ and a \hbox{$2$-bilipschitz} embedding 
$\psi_n:Z_n\to N_{\rho_n}$ so that $\hat J\cup D\subset Z_n$ and if $T$ is a toroidal boundary component
of $\hat J$, then $\psi_n(T)$ bounds a Margulis tube  in $N_{\rho_n}$. 
Moreover, if $c$ is a curve in $R_j\cap T$, for some $j$,  then
$\psi_n(c)$ is homotopic to the core curve of the Margulis tube.
Let $J_n$ be the union of
$\psi_n(\hat J)$ and all the Margulis tubes bounded by toroidal components of $\psi_n(\partial \hat J)$.
Then, $J_n$ is homeomorphic to $F\times [0,1]$ and $F_n=\psi_n(\pi(F))$ is homotopic, within $J_n$,
to a level surface of $J_n$.
Moreover, Lemma \ref{comparisons respect tubes}(4) implies that  each level surface of $J_n$ is  properly
homotopic to a level surface in $N_{\rho_n}^1$ for all large enough $n$.
Hence, $\psi_n(D)$ is a disk in $N_{\rho_n}$ bounded by a homotopically non-trivial curve in 
an embedded incompressible surface, which  is impossible.
Therefore, $\partial_1\hat J$ is incompressible in $\hat N^0$.  Since $\partial\hat N^0$ is incompressible in $\hat N$, it follows
that $\pi_1(\hat J)$ injects into  $\hat\Gamma$.
We have established property (3).

We now turn to the proof of property (4).
Let $d$ be a parabolic curve for $N_\rho$. If $i(d,q)\ne 0$, then $\pi(d)$ is non-peripheral in
the regular neighborhood $\hat J$ of $\pi(F)$. Since, $\partial_1\hat J$ is incompressible in $\hat N^0$,
it follows that $\pi(d)$ is non-peripheral in $\hat N^0$. However, since $\pi(d)$ is
associated to a parabolic element of $\hat\Gamma$, this is impossible. Therefore, $i(d,q)=0$.

Now suppose that $d$ is an unwrapped downward-pointing parabolic. 
It remains to show that if $c$ is a curve on $S$ which intersects $d$, then the geodesic representative
of $c$ lies above the geodesic representative $d_n^*$ of $d$ in $N_{\rho_n}$ for all sufficiently large $n$.

We first observe that there exists an immersed annulus $A$ in $\hat N^0$ joining $\pi(d)$ to
an essential curve $a$ in the cusp region $V(\pi(d))$ associated to $\pi(d)$ in $\hat N$ whose interior
is disjoint from $\pi(F)$.
We may assume that $\hat J$ is disjoint from $V(\pi(d))$.
Since $\pi(d)$ is homotopic into $V(d)$, there exists
an essential curve $a$ in $\partial V(d)$ which is homotopic to $\pi(d)$.
Let $A$ be an immersed annulus in $\hat N^0$ joining $\pi(d)$ to $a$. 
If $A$ cannot be chosen so that its interior is disjoint from $\pi(F)$,
then there exists a curve $\pi(b)$ in $\pi(F-Q)$ which is homotopic
to $\pi(d)$ in $\hat N$, but $b$ is not homotopic to $d$ in $F$.
Then there exists $\gamma\in\hat\Gamma-\rho(\pi_1(S))$ such that $\gamma \rho(b)\gamma^{-1}=\rho(d)$, so
$\rho(b)$ is also parabolic.
Let $V(b)$ and $V(d)$ be
the distinct cusp regions associated to $b$ and $d$ in $N_\rho$. Since $\pi(b)$ is homotopic to
$\pi(d)$, $\pi(V(b))=\pi(V(d))$. 
Lemma \ref{comparisons respect tubes} implies that $\rho_n(b)$ is homotopic to $\rho_n(d)$ in $N_{\rho_n}$ for all
large enough $n$, which is a contradiction. Therefore, we may assume that the interior of $A$ is disjoint from $\pi(F)$ as claimed.

We next observe that $F_n=\psi_n(\pi(F))$ lies above $d_n^*$ for all large enough $n$. 
The annulus $A$ lifts to an annulus in $N_\rho$ which lies below $F$.  We may assume that,
for all large enough $n$, $A\subset Z_n$ and $\psi_n(a)$ is an essential curve in
the boundary of the Margulis tube associated to $d_n^*$ in $N_{\rho_n}$. Let $R$ be the component
of $F-Q$ containing $d$. Since $\ell_\rho(\partial R)=0$, a result of Otal \cite[Theorem A]{otal-curves}
implies that, for all large enough $n$, the geodesic representative of each component of $\partial R$ is unknotted
in $N_{\rho_n}$. Lemma 2.9 in \cite{BBCM} then implies that 
$\psi_n(\pi(R))$ is a level subsurface of $N_{\rho_n}^1$  for all large enough $n$.
Since $\psi_n(A)$ lies
below $\psi_n(\pi(R))$, $d_n^*$ lies below the embedded subsurface $\psi_n(\pi(R))$. 
Lemma 2.7 in \cite{BBCM} then implies that
$d_n^*$ also lies below $F_n$.

If $c$ is a curve on $S$ which intersects $d$ essentially, then
$c$ has a representative $c_n$ on $F_n$ of length at most $L(c)$, for all $n$, so
there exists a homotopy from $c_n$ to either
$c_n^*$ or to a Margulis region in $N_{\rho_n}$ associated to $c$ which has tracks of length at most
$D(c)$, where $D(c)$ depends only on $L(c)$ (see \cite[Lemma 2.6]{ELC2}).
If $d(\partial \MT_\epsilon^n,d_n^*)>D(c)$, then this homotopy will miss $d_n^*$, which implies that
$c_n^*$ lies above $d_n^*$. However, this will be the case if $\ell_{\rho_n}(d)$ is sufficiently close to $0$,
which occurs for all large enough $n$.

The proof of property (4)  for unwrapped upward-pointing parabolics is analogous.

If $q$ is non-empty and there is a compact core for $N$ which
embeds in $N_\rho$, then $\pi|_F$ is homotopic to an embedding. However, since 
$\partial_1\hat J$ has incompressible
boundary, this implies that $\pi|_F$ is homotopic to an embedding within $\hat J$, which is clearly impossible.
This establishes (5) and completes the proof.
\end{proof}

Let $q_i$ be a curve of $q$. We will now define the  wrapping number of $q_i$ with respect to $ ( \{ \rho_n \} , \rho,\Gamma ) $.
Consider the manifolds $X$ and $\hat X$ defined in section \ref{define wrapping numbers} with $G = F$ and $e=q_i$.
From (3) we get an inclusion  \hbox{$\iota:\hat J \to  \hat X$}. Furthermore,  $\iota\circ \pi:F\to \hat X$
is homotopic, as a map into $X$, to $id\times\{0\}$.
Lemma \ref{wrapping} implies that  there is a unique $k \in \Z$ such that $\iota\circ\pi$ is homotopic to $f_k$ as maps into $\hat X$.
We then define the {\em wrapping numbers} $w^+(q_i) = k$ and $w^-(q_i) = k-1$.
Of course, it is clear that $w^+$ determines $w^-$, but as we will see, it is convenient to keep track of both numbers. 
Notice that the parabolic corresponding to a curve $q_i$ is downward pointing (in $N_\rho$) if and only if $w^+(q_i)>0$.

If $q=\{q_1,\ldots,q_r\}$, then we get  $r$-tuples
$$w^+(q)=(w^+ (q_1),...,w^+ (q_r))\ \ \ {\rm and}\ \ \ \ w^-(q)=(w^-(q_1),...,w^-(q_r)).$$

\medskip\noindent
{\bf Remarks:} (1) Our wrapping numbers are closely related to the wrapping coefficients discussed in Brock-Canary-Minsky
\cite[Section 3.6]{ELC2} and the wrapping numbers defined by Evans-Holt \cite{evans-holt}.

(2) Proposition \ref{limit set machine} may be viewed as a special case (and amplification) of the analysis carried out in
section 4 of Anderson-Canary-McCullough \cite{ACM}. In the language of that paper, the subsurfaces $\{F_j\}$ are the 
relative compact carriers of the precisely embedded system $\{\rho(\pi_1(R_j)\}$ of generalized web subgroups.

\section{Asymptotic behavior of end invariants in convergent sequences}
\label{nec}

In this section, we prove that if a sequence of Kleinian surface groups converges, then some subsequence of
the end invariants bounds projections. Along the way we will see that the sequence of end invariants
also predicts the parabolics in the algebraic limit and whether they are upward-pointing or downward-pointing.
In combination with results from \cite{BBCM} we see that the asymptotic behavior of the end
invariants predicts all the lamination and curve components of the end invariants of the algebraic limit.
Predicting the conformal structures which arise is significantly more mysterious. We also see that the asymptotic
behavior of the end invariants predicts the wrapping multicurve and the associated wrapping numbers
in any geometric limit.

\begin{theorem}{necessity}
Suppose that $\{\rho_n\}$ is a  convergent sequence in $AH(S)$ with
end invariants $\{\nu_n^{\pm}\}$ and that $\lim\rho_n=\rho$. 
Then there exists a subsequence $\{\rho_j\}$ such that the sequence
$\{\nu_j^\pm\}$ bounds projections.

Furthermore, if $\{\rho_j\}$ is a subsequence such that
$\{\nu_j^\pm\}$ bounds projections, then
\begin{enumerate}
\item
$\ell_\rho(d)=0$ if and only if $d$ is a combinatorial parabolic for the sequence
$\{\nu_j^\pm\}$,
\item
A parabolic curve $d$ is upward-pointing in $N_\rho$ if and only if
$$|m(\nu_j^+,d,\mu)|-|m(\nu_j^-,d,\mu)|\to +\infty.$$
\item
A lamination $\lambda\in {\mathcal{EL}}(Y)$ is an ending lamination for an
upward-pointing (respectively downward-pointing) geometrically infinite end for $N_\rho$ if and only if
$\{\pi_Y(\nu_j^+)\}$ (respectively $\{\pi_Y(\nu_j^-)\}$)  converges to $\lambda\in{\mathcal{C}}(Y)\cup{\mathcal{EL}}(Y)$.
\item
If $\{\rho_j(\pi_1(S))\}$ converges geometrically to
$\hat\Gamma$, then the wrapping multicurve for
$(\{\rho_j\},\rho,\Gamma)$ is the collection of combinatorial wrapping parabolics
given by $\{\nu_j^\pm\}$ and if $d$ is a wrapping parabolic,
then the combinatorial wrapping number $w(d)$ agree with the actual
wrapping number $w^+(d)$.
\end{enumerate}
\end{theorem}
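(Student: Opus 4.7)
The plan is to pass to a subsequence for which the geometric limit exists and the combinatorial data stabilizes, handle the case without wrapping using the a priori bounds of \cite{minsky:kgcc} and \cite{BBCM}, and then reduce the wrapped case to the unwrapped case by comparison with two auxiliary Dehn-twisted sequences. By Lemma \ref{comparisons respect tubes}, first pass to a subsequence $\{\rho_j\}$ so that $\{\rho_j(\pi_1(S))\}$ converges geometrically to a Kleinian group $\hat\Gamma$, and let $q=\{q_1,\dots,q_r\}$ be the wrapping multicurve and $w^+(q_i)$ the wrapping numbers furnished by Proposition \ref{limit set machine}. Choose the coarse basepoint $\mu$ to be a complete marking whose base pants decomposition avoids both the parabolic locus of $\rho$ and $q$, and consists of curves of uniformly bounded length in $N_{\rho_j}$.

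First assume $q=\emptyset$. Property (a) is immediate from Theorem \ref{bounded curves near geodesic} applied with $W=S$, since $\mu\in\mathcal{C}(\rho_j,L)$ for appropriate $L$. For property (b), fix a curve $d$. If $\ell_\rho(d)>0$, then $\ell_{\rho_j}(d)$ is bounded below uniformly, so for every subsurface $Y$ with $d\subset\partial Y$, $\ell_{\rho_j}(\partial Y)$ is bounded below; Theorem \ref{kgcc fact} combined with Theorem \ref{bounded curves near geodesic} then bounds $d_Y(\nu_j^\pm,\mu)$, placing $d$ in case (b)(i). If instead $\ell_\rho(d)=0$ and, say, $d$ is downward-pointing parabolic (necessarily unwrapped since $q=\emptyset$), then Proposition \ref{limit set machine}(4) shows that $\mu^*$ lies above $d^*$ in $N_{\rho_j}$ for all large $j$, and Theorem \ref{top is top} provides a uniform bound on $d_Y(\mu,\nu_j^+)$ for every $Y$ with $d\subset\partial Y$. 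Since $d$ is not in the parabolic locus of $\nu_j^+$, the term $1/\ell_{\nu_j^+}(d)$ also remains bounded, so case (b)(i) holds with $\beta(d)=+$; the upward-pointing case is symmetric.

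Now suppose $q\neq\emptyset$. Fix a wrapping parabolic $d=q_i$ with wrapping number $k=w^+(d)$. Let $s_j=s_j(q_i)\in\Z$ denote the integer produced by applying Lemma \ref{dehn twist} to the bilipschitz embedding $\psi_j$ from Lemma \ref{comparisons respect tubes} restricted to a regular neighborhood of the level surface $F\subset N_\rho$ supplied by Proposition \ref{limit set machine}; since the Margulis tube of $d$ in $N_{\rho_j}$ collapses to a cusp, $|s_j|\to\infty$. Consider the two auxiliary sequences $\{\rho_j\circ D_{q_i}^{-k s_j}\}$ and $\{\rho_j\circ D_{q_i}^{-(k-1)s_j}\}$ (performed simultaneously at every wrapping curve). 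A careful analysis exploiting Lemma \ref{dehn twist} and the relation $w^-=w^+-1$ shows that these sequences converge in $AH(S)$ to limits whose wrapping multicurves do not contain $d$, and that their end invariants are $D_{q_i}^{k s_j}(\nu_j^\pm)$ and $D_{q_i}^{(k-1)s_j}(\nu_j^\pm)$ respectively. Applying the unwrapped case to each auxiliary sequence bounds $d_Y(D_Y^{k s_j}(\nu_j^+),\mu)$ and $d_Y(D_Y^{(k-1)s_j}(\nu_j^-),\mu)$ at $Y=\collar(d)$, while the $m^{na}$ bounds transfer because Dehn twisting about $d$ leaves invariant the projections to non-annular subsurfaces containing $d$ in their boundary. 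This establishes case (b)(ii) with $w(d)=w^+(d)$ and the required $\{s_j\}$, simultaneously proving property (4).

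Properties (1)--(3) follow by unpacking the case analysis. (1) is immediate: combinatorial parabolics are exactly those curves where case (b)(i) fails on both sides, which the previous argument identifies with parabolicity in $N_\rho$. (2) records which of $\beta(d)=+$ (downward parabolic) or $\beta(d)=-$ (upward parabolic) applies, and the quantitative comparison $|m(\nu_j^+,d,\mu)|-|m(\nu_j^-,d,\mu)|\to+\infty$ is read off from the bounds proved above. (3) is a direct application of Theorem \ref{limitend}. The principal obstacle is the wrapped case in paragraph three: rigorously establishing that the two auxiliary Dehn-twisted sequences converge, with empty wrapping at $d$ and with end invariants related to the originals by the predicted powers of Dehn twists, requires a detailed comparison between the wrapped surface $F$, its image under the near-isometric embeddings $\psi_j$, and the homotopical data tracked by Lemma \ref{dehn twist}.
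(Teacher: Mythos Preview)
Your overall architecture matches the paper's proof closely: pass to a geometrically convergent subsequence, use Proposition \ref{limit set machine}(4) together with Theorem \ref{top is top} for unwrapped parabolics, and reduce wrapped parabolics to the unwrapped case via two auxiliary Dehn-twisted sequences (this is the paper's Lemma ``wrapped behavior''). However, there are two genuine gaps.

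First, your justification ``Since $d$ is not in the parabolic locus of $\nu_j^+$, the term $1/\ell_{\nu_j^+}(d)$ also remains bounded'' is insufficient. Even granting that $d$ is not an upward-pointing parabolic for $\rho_j$, this only gives $\ell_{\nu_j^+}(d)>0$ pointwise, not a uniform lower bound over $j$. The paper supplies a separate geometric argument here (in its Lemma ``unwrapped behavior''): if $\ell_{\nu_j^+}(d)$ were small, the $2$-Lipschitz retraction of Epstein--Marden--Markovic from the conformal boundary to the convex core boundary would produce a short representative of $d$ on the upper convex core boundary, hence inside $\MT_\epsilon(d)$; a bounded-length curve $a\in\mu$ crossing $d$ has $a_j^*$ disjoint from $\MT_\epsilon(d)$, which forces $a_j^*$ below $d_j^*$ and contradicts Proposition \ref{limit set machine}(4). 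This step is not automatic and cannot be replaced by your one-line claim.

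Second, your case analysis establishes that $\{\nu_j^\pm\}$ bounds projections, but it does not prove the converse direction needed for property (1): that $\ell_\rho(d)=0$ forces $d$ to be a combinatorial parabolic. For a downward-pointing $d$ you only bound $m(\nu_j^+,d,\mu)$; you never show $m(\nu_j^-,d,\mu)\to\infty$. The paper closes this with Minsky's Short Curve Theorem from \cite{ELC1}: if $\ell_{\rho_j}(d)\to 0$ then at least one of $1/\ell_{\nu_j^+}(d)$, $1/\ell_{\nu_j^-}(d)$, or $\sup_{d\subset\partial Y}d_Y(\nu_j^+,\nu_j^-)$ is unbounded, so some $m(\nu_j^\beta,d,\mu)$ must blow up. Without this input, properties (1) and (2) are both incomplete, since the quantitative statement in (2) also requires the unbounded side to actually diverge rather than merely fail to be controlled by your argument.
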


\noindent{\bf Remark:}
In general, it is necessary to pass to a subsequence since the phenomenon of
self-bumping  (see McMullen \cite{mcmullen} or Bromberg-Holt \cite{bromberg-holt})
assures that you can have a convergent sequence with one subsequence
where the wrapping multicurve is empty and another subsequence where the wrapping
multicurve is non-empty.

\begin{proof}
We first prove that any geodesic joining $\nu_n^+$ to $\nu_n^-$ always intersects some
bounded set.

\begin{lemma}{bounded condition}
Suppose that $\{\rho_n\}$ is a sequence in $AH(S)$ converging to $\rho$.
Let $\nu_n^{\pm}$ be the end invariants of $\rho_n$.
There is a bounded set $\mathcal{B}\subset\mathcal{C}(S)$ such that every geodesic
joining $\pi_S(\nu_n^+)$ to $\pi_S(\nu_n^-)$ intersects $\mathcal{B}$.
\end{lemma}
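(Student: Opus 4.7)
The plan is to exhibit a single curve $d_0$ on $S$ whose length in every $N_{\rho_n}$ is uniformly bounded, and then to quote Theorem \ref{bounded curves near geodesic} to force every geodesic from $\pi_S(\nu_n^+)$ to $\pi_S(\nu_n^-)$ to pass within a uniformly bounded distance of $d_0$. The bounded set $\mathcal B$ will then be taken to be a ball about $d_0$ of that uniform radius.

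First I would fix any essential, non-peripheral simple closed curve $d_0$ on $S$. Since $\rho\in AH(S)$ is discrete and faithful and $d_0$ is not peripheral, $\rho(d_0)$ is a non-trivial, non-parabolic or parabolic isometry, so $\ell_\rho(d_0)$ is finite. Since lengths of fixed elements are continuous under algebraic convergence, $\ell_{\rho_n}(d_0)\to \ell_\rho(d_0)$, and in particular $L':=\sup_n \ell_{\rho_n}(d_0)<\infty$. Let $L_0$ be the constant of Theorem \ref{bounded curves near geodesic}, and set $L=\max(L',L_0)+1$. Then $d_0\in \mathcal{C}(\rho_n,L)$ for every $n$.

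Next I would apply Theorem \ref{bounded curves near geodesic} with $W=S$: for every $n$, the set $\pi_S(\mathcal{C}(\rho_n,L))$ has Hausdorff distance at most $D_0=D_0(L)$ from any geodesic in $\mathcal{C}(S)$ joining $\pi_S(\nu_n^+)$ to $\pi_S(\nu_n^-)$ (recall that such a geodesic is defined regardless of whether the endpoints sit in $\mathcal{C}(S)$ or in $\mathcal{EL}(S)$). Since $\pi_S(d_0)=d_0$ lies in $\pi_S(\mathcal{C}(\rho_n,L))$, every such geodesic contains a vertex within distance $D_0$ of $d_0$. Therefore every geodesic joining $\pi_S(\nu_n^+)$ to $\pi_S(\nu_n^-)$ meets the closed ball $\mathcal{B}:=\overline{B_{D_0}(d_0)}\subset \mathcal{C}(S)$, which is bounded.

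There is no real obstacle here beyond correctly setting up the constants: the only small caveat is the implicit hypothesis that $S$ is more complex than a thrice-punctured sphere, which is in force throughout the paper so that $\mathcal{C}(S)$ is connected and infinite, and Theorem \ref{bounded curves near geodesic} applies with $W=S$.
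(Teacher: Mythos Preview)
Your proof is correct and follows essentially the same approach as the paper: fix a single curve whose length is uniformly bounded along the sequence (by continuity of translation length under algebraic convergence), then invoke Theorem~\ref{bounded curves near geodesic} with $W=S$ to conclude that every geodesic from $\pi_S(\nu_n^+)$ to $\pi_S(\nu_n^-)$ passes within a fixed distance of that curve. The paper's proof is slightly terser but uses the same curve-and-ball construction with the same constant $D_0(\max\{L_a,L_0\})$.
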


\begin{proof}
If $a$ is any curve in $\CC(S)$, then there is a uniform upper bound $L_a$
on the length $l_{\rho_n}(a)$ for all $n$.
It follows from Theorem \ref{bounded curves near geodesic}
that $a$ lies within \hbox{$D(L_a)=D_0(\max\{L_a,L_0\})$} of any geodesic joining
$\pi_S(\nu_n^+)$ to $\pi_S(\nu_n^-)$. 
One may thus choose $\mathcal{B}$ to
be a neighborhood of $a$ in $\mathcal{C}(S)$ of 
radius $D(L_a)$. 
\end{proof}

We next show that  $\ell_\rho(d)$ is non-zero if and only if 
$\{m(\nu_n^+,\mu,d)\}$ and $\{m(\nu_n^-,\mu,d)\}$ are  both eventually bounded
for any marking $\mu$.
This is a fairly immediate consequence  of  work of Minsky, namely Theorem \ref{kgcc fact} and
the Short Curve Theorem of \cite{ELC1}.

\begin{lemma}{d bounded}
Suppose that $\{\rho_n\}$ is a sequence in $AH(S)$ converging to $\rho$.
Let $\nu_n^{\pm}$ be the end invariants of $\rho_n$ and let $\mu$ be a
marking on $S$.
Then, $l_\rho(d)>0$ if and only if 
$\{m(\nu_n^+,d,\mu)\}$ and $\{m(\nu_n^-,d,\mu)\}$ are both eventually bounded. 
\end{lemma}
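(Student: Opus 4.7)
The plan is to combine the continuity of translation length under algebraic convergence with two inputs: Theorem \ref{kgcc fact} (which lets us rule out large subsurface projections of $\nu_n^\pm$ when $d$ is not short in $N_{\rho_n}$) and Minsky's Short Curve Theorem from \cite{ELC1} (which, in the reverse direction, forces $\max\{m(\nu_n^+,d,\mu), m(\nu_n^-,d,\mu)\}$ to blow up whenever $\ell_{\rho_n}(d)$ tends to $0$). Throughout I will use that $\rho_n\to\rho$ implies $\ell_{\rho_n}(d)\to \ell_\rho(d)$ and that the fixed marking $\mu$ has uniformly bounded length, so its curves lie in $\CC(\rho_n,L)$ for some $L$ and all large $n$.

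For the forward direction, assume $\ell_\rho(d)>0$, so $\ell_{\rho_n}(d)\ge\epsilon$ for some $\epsilon>0$ and all large $n$. First I bound $\sup_{d\subset\partial Y}d_Y(\nu_n^\pm,\mu)$: suppose toward contradiction that there exist subsurfaces $Y_n$ with $d\subset\partial Y_n$ and $d_{Y_n}(\nu_n^+,\mu)\to\infty$ (the case of $\nu_n^-$ is identical). Theorem \ref{bounded curves near geodesic} shows that $\pi_{Y_n}(\CC(\rho_n,L))$ lies within Hausdorff distance $D_0$ of any geodesic in $\CC(Y_n)$ joining $\pi_{Y_n}(\nu_n^+)$ to $\pi_{Y_n}(\nu_n^-)$; since $\pi_{Y_n}(\mu)\in\pi_{Y_n}(\CC(\rho_n,L))$ lies within $D_0$ of such a geodesic, that geodesic has length at least $d_{Y_n}(\nu_n^+,\mu)-D_0\to\infty$, and the Hausdorff approximation then forces $\operatorname{diam}(\pi_{Y_n}(\CC(\rho_n,L)))\to\infty$. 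Theorem \ref{kgcc fact} yields $\ell_{\rho_n}(\partial Y_n)\to 0$, hence $\ell_{\rho_n}(d)\to 0$, contradicting $\ell_{\rho_n}(d)\ge\epsilon$. It remains to bound $1/l_{\nu_n^\pm}(d)$: if $d$ were a component of $\nu_n^\pm$ for infinitely many $n$ then $\ell_{\rho_n}(d)=0$ along a subsequence, again a contradiction; if $d$ lies on a geometrically finite subsurface with $l_{\nu_n^\pm}(d)\to 0$, then Minsky's Short Curve Theorem gives $\ell_{\rho_n}(d)\to 0$, another contradiction; and the final case $l_{\nu_n^\pm}(d)=\infty$ contributes $1/l_{\nu_n^\pm}(d)=0$ and is harmless.

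For the backward direction, suppose both $\{m(\nu_n^+,d,\mu)\}$ and $\{m(\nu_n^-,d,\mu)\}$ are eventually bounded. If $\ell_\rho(d)=0$, then $\ell_{\rho_n}(d)\to 0$, and Minsky's Short Curve Theorem forces $\max\{m(\nu_n^+,d,\mu),m(\nu_n^-,d,\mu)\}\to\infty$, so at least one of the two sequences is not eventually bounded, contradicting our assumption. Hence $\ell_\rho(d)>0$.

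The main obstacle lies in the $1/l_{\nu_n^\pm}(d)$ piece of the forward direction: the fact that small boundary hyperbolic length of $d$ on a geometrically finite component forces small translation length in $N_{\rho_n}$ is a genuinely delicate ingredient, and Theorem \ref{kgcc fact} by itself handles only the subsurface-projection part of $m$. The invocation of Minsky's Short Curve Theorem from \cite{ELC1} is what closes this gap and also drives the backward direction.
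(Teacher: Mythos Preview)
Your proof is correct and follows essentially the same strategy as the paper: both directions hinge on Theorem \ref{kgcc fact} (for the subsurface-projection half of the forward implication) and Minsky's Short Curve Theorem (for the backward implication), with the paper routing the diameter blow-up slightly more directly by noting that the base of $\mu(\nu_n^\beta)$ already lies in $\CC(\rho_n,L_B)$ rather than passing through Theorem \ref{bounded curves near geodesic}. One correction to your commentary: the $1/\ell_{\nu_n^\pm}(d)$ piece you flag as the main obstacle is actually the easiest part---Bers' inequality $\ell_{\rho_n}(d)\le 2\ell_{\nu_n^\beta}(d)$ (see \cite{bers-slice}) immediately gives $\ell_{\rho_n}(d)\to 0$ when $\ell_{\nu_n^\beta}(d)\to 0$, and the paper dispatches it in one line without the Short Curve Theorem.
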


\begin{proof}
First suppose that $\{m(\nu_n^\beta,d,\mu)\}$ is not eventually bounded, then either
$\{{1\over l_{\nu_n^\beta}(d)}\}$ is not eventually bounded or
$\{\sup_{d\subset\partial Y}d_Y(\nu_n^\beta,\mu)\}$ is not eventually bounded.
If $\{{1\over l_{\nu_n^\beta}(d)}\}$ is not eventually bounded, then there exists a subsequence
$\{\rho_j\}$ of $\{\rho_n\}$ so that $\{\ell_{\rho_j}(d)\}$ converges to 0, which implies that
$\ell_\rho(d)=0$. If $\{\sup_{d\subset\partial Y}d_Y(\nu_n^\beta,\mu)\}$ is not eventually bounded,
then either 
\begin{enumerate}
\item
there exists a subsequence
for which $d$ is always a component of $\nu_{j}^\beta$, or
\item
there exists a sequence of subsurfaces $Y_{j}$ such that $d\subset \partial Y_{j}$ and
\hbox{$d_{Y_{j}}(\nu_{j}^\beta,\mu)\to\infty$}.
\end{enumerate}
In case (1),
$\ell_{\rho_{j}}(d)=0$ for
all $j$, so $\ell_\rho(d)=0$.
In case (2), since $\ell_{\rho_j}(\mu)$ is eventually
bounded and $\pi_{Y_{j}}(\mu_j^\beta)\in\overline{\pi_{Y_j}(\CC(\rho,L_B))}$ where $L_B$ is the Bers
constant for $S$ (see Brock-Bromberg-Canary-Minsky \cite[Section 2]{BBCM}), we see that
\hbox{$ {\rm diam}(\pi_{Y_{j}}(\CC(\rho,L))\to\infty$} for some $L$.
Theorem \ref{kgcc fact} then implies that $\lim\ell_{\rho_{j}}(d) = 0$, so again $\ell_\rho(d)=0$.
Therefore, in all cases, if $\{m(\nu_n^\beta,d,\mu)\}$ is not eventually bounded, then $\ell_\rho(d)=0$.
It follows that if $l_\rho(d)>0$ then
$\{m(\nu_n^+,d,\mu)\}$ and $\{m(\nu_n^-,d,\mu)\}$ are both eventually bounded. 

If $\ell_\rho(d)=0$, then Minsky's Short Curve Theorem \cite{ELC1} implies
that at least one of $\{\frac{1}{\ell_{\nu_n^+}(d)}\}$,  $\{\frac{1}{\ell_{\nu_n^-}(d)}\}$, and
$\{\sup_{d\subset\partial Y}d_Y(\nu_n^+,\nu_n^-)\}$ is not eventually bounded. (For a similar
restatement of the Short Curve Theorem in the quasifuchsian case see
Brock-Bromberg-Canary-Minsky \cite[Thm. 2.2]{BBCM0}.)
It follows that $\{m(\nu_n^\beta,d,\mu)\}$ is not eventually bounded for some $\beta\in\{\pm\}$.
\end{proof}

We now pass to a subsequence $\{\rho_j\}$  of $\{\rho_n\}$ so that
$\{\rho_j(\pi_1(M))\}$ converges geometrically to 
$\hat\Gamma$. 
Let $\hat N={\bf H}^3/\Gamma$. Let $F$, $q=\{q_1,\ldots,q_s\}$ and $Q$, be
the level surface, wrapping multicurve and collar neighborhood  of $q$ provided by
Proposition \ref{limit set machine}.
Let 
$$f:S\to F$$
be a homeomorphism such that $f_*=\rho\in AH(S)$
and let $\hat J$ be a closed regular neighborhood of $\pi(F)$ in $\hat N^0$.

The following lemma characterizes the asymptotic behavior of the end invariants
relative to an unwrapped parabolic.

\begin{lemma}{unwrapped behavior}
Suppose that $\{\rho_j\}$ is a sequence in $AH(S)$ converging to $\rho$
such that $\{\rho_j(\pi_1(S))\}$ converges geometrically to $\hat \Gamma$
and that $d$ is an unwrapped parabolic for the triple $(\{\rho_j\},\rho,\hat\Gamma)$.
Then,
\begin{enumerate}
\item
if $d$ is a downward-pointing cusp in $N_\rho$, then 
$\{ m(\nu_j^+,d,\mu)\}$ is eventually bounded, and
\item
if $d$ is an upward-pointing cusp in $N_\rho$,
then $\{ m(\nu_j^-,d,\mu)\}$ is eventually bounded.
\end{enumerate}
\end{lemma}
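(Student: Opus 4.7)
The two cases are symmetric under reversing orientation in $N_\rho^1\cong S\times\R$, so I focus on case (1): showing $m(\nu_j^+,d,\mu)=\max\{\sup_{d\subset\partial Y}d_Y(\nu_j^+,\mu),\,1/\ell_{\nu_j^+}(d)\}$ is eventually bounded. For the subsurface projection term, I choose a component $\mu_0$ of the complete marking $\mu$ that intersects $d$ essentially. Algebraic convergence $\rho_j\to\rho$ bounds $\ell_{\rho_j}(\mu_0)\le L$ uniformly, and Proposition \ref{limit set machine}(4) puts $\mu_0^*$ above $d^*$ in $N_{\rho_j}$ for all large $j$. For each essential subsurface $Y$ with $d\subset\partial Y$, the curve $\mu_0$ overlaps $Y$ (as it crosses $d$), so Theorem \ref{top is top} gives $d_Y(\mu_0,\nu_j^+)\le D(S,L)$ for large $j$. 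Combined with the uniform bound $d_Y(\mu,\mu_0)=O(1)$ from Lemma \ref{intersection bounds distance}, the triangle inequality yields the desired uniform upper bound on $d_Y(\nu_j^+,\mu)$.

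For the length term, I will show that the Margulis region $V_j$ of $d$ in $N_{\rho_j}$ eventually lies in the downward component of $N_{\rho_j}^1$ relative to a level surface $\tilde F_j$ homotopic to $F_j=\psi_j(\pi(F))$. Since the cusp $V_d\subset N_\rho$ meets the regular neighborhood $\hat J$ of $\pi(F)$ on the downward side of $\pi(F)$ only (using Proposition \ref{limit set machine}(3) and that $d$ is downward-pointing and unwrapped), and since Lemma \ref{comparisons respect tubes}(3) together with the small bilipschitz distortion of $\psi_j$ implies that $V_j$ is disjoint from $F_j$ and inherits this downward position, a level surface $\tilde F_j$ close to $F_j$ separates $V_j$ from the upward end of $N_{\rho_j}^1$. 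Now if $\ell_{\nu_j^+}(d)\to 0$ along a subsequence (which includes the case $d\in\nu_j^+$, where $\ell_{\nu_j^+}(d)=0$), a Bers-type comparison between conformal length on $\nu_j^+$ and hyperbolic length near the top end of $N_{\rho_j}^1$ produces representatives of $d$ of length $o(1)$ lying arbitrarily far up in $N_{\rho_j}^1$; by the Margulis Lemma, such short representatives must enter $V_j$, contradicting that $V_j$ lies in the downward component. Hence $\ell_{\nu_j^+}(d)$ is bounded below.

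The main technical obstacle is the containment of $V_j$ in the downward component of $N_{\rho_j}^1\setminus\tilde F_j$. This requires carefully tracking the downward position of the cusp $V_d$ relative to $F$ in $N_\rho$ through the covering $\pi$ and the bilipschitz embeddings $\psi_j$, using the explicit product structure $\hat J\cong F\times[-1,1]\setminus(Q\times(-\tfrac12,\tfrac12))$ from Proposition \ref{limit set machine}(3) together with the fact that $F_j$ is homotopic (and close) to an actual level surface in $N_{\rho_j}^1$ via Lemma~2.9 of \cite{BBCM}. Once this geometric separation is in place, the contradiction with a hypothetical arbitrarily high short representative of $d$ is immediate from the Margulis Lemma.
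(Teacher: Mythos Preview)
Your treatment of the subsurface–projection term is the paper's argument: pick a curve $a\in\mu$ crossing $d$, use Proposition~\ref{limit set machine}(4) to place $a^*$ above $d^*$ in $N_{\rho_j}$, and apply Theorem~\ref{top is top}.

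The length term is where your approach diverges and where there is a genuine gap. The phrase ``Bers-type comparison \ldots\ produces representatives of $d$ of length $o(1)$ lying arbitrarily far up'' does not describe an actual mechanism. A short curve on $\nu_j^+$ yields, via the Epstein--Marden--Markovic $2$-Lipschitz retraction $\partial_cN_{\rho_j}\to\partial C(N_{\rho_j})$, a short representative on the \emph{top component of the convex core boundary}; that surface sits at a fixed height, not arbitrarily high, and you give no reason it lies above $\tilde F_j$. (Equidistant surfaces farther out have \emph{larger} induced metric, so they do not help.) Even granting your step~1 that $V_j$ lies below $\tilde F_j$---which itself requires re-running the argument inside the proof of Proposition~\ref{limit set machine}(4), together with the observation that $V_j$ is connected and disjoint from $F_j$---the contradiction does not close, because you cannot locate the short convex-core-boundary representative relative to $\tilde F_j$.

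The paper avoids the level surface entirely and compares to the closed geodesic $a_j^*$ instead. Since $\ell_{\rho_j}(a)\le L_\mu$, there is a fixed $\epsilon>0$ with $a_j^*\cap\MT_\epsilon(d)=\emptyset$. If $\ell_{\nu_j^+}(d)<\epsilon/2$, the Epstein--Marden--Markovic map produces a representative $d_j$ of $d$, of length $<\epsilon$, on the upward-pointing component $f_j(R_j)$ of $\partial C(N_{\rho_j})$; thus $d_j\subset\MT_\epsilon(d)$. Because $a_j^*$ is a closed geodesic it lies in the convex core and cannot lie above $f_j(R_j)$; since it misses $d_j$ it is disjoint from $f_j(R_j)$ and hence lies below it, forcing $a_j^*$ below $d_j$. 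As $d_j$ is homotopic to $d_j^*$ inside $\MT_\epsilon(d)$ and $a_j^*$ avoids this region, $a_j^*$ lies below $d_j^*$, contradicting Proposition~\ref{limit set machine}(4). The point is that $a_j^*$, unlike $\tilde F_j$, is automatically on the correct side of the convex core boundary.
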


\begin{proof}
Let $d$ be a downward-pointing cusp in $N_\rho$ and let $L_\mu$ be an upper bound for the 
length of $\mu$ in $N_{\rho_j}$ for all $j$.
Let $a$ be a curve in $\mu$ which crosses $d$. Proposition \ref{limit set machine}
guarantees  that $a^*$ lies above $d^*$  in $N_{\rho_j}$ for all sufficiently large $j$. 
Theorem \ref{top is top} then implies that, for all sufficiently large $j$,
$d_Y(a,\nu_j^+)<D(L_\mu)$ if $Y\subset S$ is
any subsurface with $d$ in its boundary.
Therefore, $\{\sup_{d\subset\partial Y}d_Y(\nu_j^+,\mu\})$ is eventually bounded.

It remains to check that there is an eventual lower bound on $l_{\nu_j^+}(d)$.
(For a similar argument in the quasifuchsian case, see \cite[Lemma 2.5]{BBCM0}.)
Since $l_{\rho_j}(a)<L_\mu$ for all $j$,
there exists $\epsilon>0$ so that the geodesic representative $a^*_j$ of $a$ in
$N_{\rho_j}$ misses $\MT_\ep(d)$ for all $j$.
The convex core $\partial C(N_{\rho_j})$ of $N_{\rho_j}$ is the smallest convex submanifold of $N_{\rho_j}$ containing
all the closed geodesics. Epstein, Marden and Markovic \cite[Theorem 3.1]{EMM} proved that there is a
$2$-Lipschitz map $f_j:\partial_cN_{\rho_j}\to \partial C(N_{\rho_j})$ so that $f_j$ extends to a strong deformation
retraction of $\partial_cN_{\rho_j}\cup N_{\rho_j}$ onto $C(N_{\rho_j})$. In particular, if $R_j$ 
is a downward-pointing component of $\partial_cN_{\rho_j}$, then no closed geodesic in $N_{\rho_j}$ lies below
$f(R_j)$. If $l_{\nu_j^+}(d)=l_j<\ep/2$, then there is a representative $d_j$ of $d$ in the image $f_j(R_j)$ of
a downward-pointing component
$R_j$ of $\partial_cN_{\rho_j}$ which  has length at most $2l_j<\epsilon$, so is
contained in $\MT_\ep(d_j)$. Therefore, $a_j^*$ cannot intersect $d_j$, so is disjoint from $f(R_j)$. 
It follows that  $f(R_j)$ lies below $a_j^*$, which implies
that $d_j$ lies below $a_j^*$. Since $d_j$ is homotopic to $d_j^*$ within $\MT_\epsilon(d_j)$ and $a_j^*$ is disjoint
from $\MT_\epsilon(d_j)$,  we see that $d_j^*$ lies below $a_j^*$. However, this
contradicts Proposition \ref{limit set machine},
so $l_{\nu_j^+}(d)\ge \ep/2$ for all sufficiently large $j$ which completes the proof for
downward-pointing cusps.

The proof in the case that $d$ is an upward-pointing cusp is similar.
\end{proof}

The situation is more complicated for wrapped parabolics.

We will abuse notation by letting $q$ also denote the multicurve $f^{-1}(q)\subset S$ and by
letting $Q$ denote the subsurface $f^{-1}(Q)$ of $S$.
Let  $X= S \times [-1,1]$ and $\hat X = X - V$ where $V=  Q \times (-\frac12, \frac12) \subset X$ is a union of  open
solid tori in the homotopy class of $q$. Set 
$$\hat Z=(S-Q)\times\{0\}\cup\partial  V\subset \hat X.$$

If $q$ is a single curve, then we are in the situation of section \ref{define wrapping numbers} with $ G=S $ and $e=q$.
We encourage the reader to focus on this situation when first reading the section.

In general, if $q=\{q_1,\ldots,q_r\}$, we divide $S$ into a collection of overlapping subsurfaces 
$\{G_1,\ldots, G_r\}$ defined as follows: $G_i$ is the connected component of $S-(Q-Q_i)$ that contains $Q_i$. 
One may then divide $X$ up into overlapping submanifolds $\{X_1,\ldots, X_r\}$ where  $X_i=G_i\times [-1,1]$. 
Similarly, one may  divide $\hat X$ up into submanifolds 
$\{\hat X_1,\ldots, \hat X_r\}$ with $\hat X_i=X_i-V_i$ where $V_i=  Q_i \times (-\frac12, \frac12) \subset X_i$ .  
Let $T_i$ be the toroidal boundary component of $\hat X_i$.

Proposition \ref{limit set machine} implies that we may identify $\hat J$ with $\hat X$.
If \hbox{$k=(k_1,\ldots,k_r)$} we may define a map $f_k:S\to\hat J$ which agrees on each $G_i$ with the 
map \hbox{$f_{k_i}:G_i\to \hat X_i$} defined in section \ref{define wrapping numbers}.  
Each of the $f_k$ determines a representation $(f_k)_*$ in $AH(S)$.
Since $f$ is homotopic to $f_{w^+(q)}$, we see that $(f_{w^+(q)})_* = \rho$.

Given a component $Q_i $ of $Q$ we denote by $D_i : S\to S$ the right Dehn twist about $Q_i$. 
For an $r$-tuple \hbox{$k=(k_1,\ldots,k_r)$}, we set $D_q^k=D_1^{k_1}\circ \ldots \circ D_r^{k_r}$.

\begin{lemma}{wrapped behavior}
For all large enough $j$, there exists a $r$-tuple $s_j=(s_{1,j},\ldots,s_{r,j})$ such that 
$\rho_j^+= \rho_j \circ (D_q^{w^+(q) s_j})_*$ and $\rho_j ^-=\rho_j\circ (D_q^{w^-(q)s_j})_*$ have the following properties:

\begin{enumerate}
\item the sequences $ \{ \rho_j^+\}$ and $\{\rho_j^-\}$ converge in $AH(S)$ to $\rho^+$ and $\rho^-$.

\item If $q_i$ is a component of $q$, then $q_i$ is an upward pointing parabolic in $N_{\rho^-}$ and  a downward pointing pararabolic
in $N_{\rho^+}$ and is unwrapped in the triples $(\{\rho^+_j\}, \rho^+, \hat \Gamma)$ and $(\{\rho^-_j\}, \rho^-, \hat \Gamma)$.

\item For each $i$, $\lim |s_{i,j}|=+\infty$.
\end{enumerate}
\end{lemma}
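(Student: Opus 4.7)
The plan is to realize $\rho_j^+$ and $\rho_j^-$ as the induced representations of two natural maps $\bar\psi_j \circ f_0, \bar\psi_j \circ f_1: S \to N_{\rho_j}$, where $\bar\psi_j = \psi_j \circ \iota^{-1}$ with $\psi_j: Z_j \to N_{\rho_j}$ the bilipschitz comparison map of Lemma \ref{comparisons respect tubes}, and to identify their algebraic limits $\rho^{\pm}$ as the representations $\pi_1(S) \to \hat\Gamma$ induced by $\iota^{-1} \circ f_0$ and $\iota^{-1} \circ f_1$. After passing to a subsequence, apply Lemma \ref{comparisons respect tubes} to obtain $K_j$-bilipschitz embeddings $\psi_j: Z_j \to N_{\rho_j}$ with $K_j \to 1$, defined on regions with $\hat J \subset Z_j$ for all large $j$, such that each $\psi_j(T_i)$ bounds a Margulis tube $U_{i,j} \subset N_{\rho_j}$ whose core is freely homotopic to $\rho_j(q_i)$. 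By Lemma \ref{comparisons respect tubes}(4) applied to $\pi \circ f: S \to N_\rho$, one has $(\bar\psi_j \circ f_{w^+(q)})_* = \rho_j$ up to conjugation. For each $i$, apply Lemma \ref{dehn twist} to $\bar\psi_j|_{\hat Z_i}$ to produce $s_{i,j} \in \Z$ for which the meridian of $U_{i,j}$ pulls back to $m_i + s_{i,j} l_i$ on $T_i$ and the Dehn twist identities of that lemma hold. Setting $s_j = (s_{1,j}, \ldots, s_{r,j})$ and assembling across all $i$ yields
\[
\rho_j^+ = \rho_j \circ (D_q^{w^+(q) s_j})_* = (\bar\psi_j \circ f_0)_*,
\qquad
\rho_j^- = \rho_j \circ (D_q^{w^-(q) s_j})_* = (\bar\psi_j \circ f_1)_*.
\]

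For (1), algebraic convergence $\rho_j^{\pm} \to \rho^{\pm}$ follows from $K_j \to 1$: for each $\gamma \in \pi_1(S)$, a fixed loop in $\hat N$ representing $\rho^{\pm}(\gamma)$ is eventually contained in $Z_j$, and $\psi_j$ carries it to a loop in $N_{\rho_j}$ in the free homotopy class of $\rho_j^{\pm}(\gamma)$; the bilipschitz convergence then forces convergence of the corresponding ${\rm PSL}(2,\mathbb C)$ elements. For (2), since $f_0$ and $f_1$ both send $\bar E_i$ into the peripheral torus $T_i = \partial V_i$, the elements $\rho^{\pm}(q_i)$ lie in the rank-$2$ cusp subgroup of $\hat\Gamma$ stabilizing $V_i$ and are therefore parabolic. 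The wrapping numbers $0$ and $1$ of $f_0$ and $f_1$ at $q_i$ are both realizable by embeddings into $\hat X_i$, so the triples $(\{\rho_j^{\pm}\}, \rho^{\pm}, \hat\Gamma)$ have $q_i$ as an unwrapped parabolic. The assignment of the downward direction to $N_{\rho^+}$ and the upward direction to $N_{\rho^-}$ is then a matter of tracking how $f_0(S)$ and $f_1(S)$ are positioned relative to $V_i$ in $\hat X_i$: since $f_0$ and $f_1$ differ by exactly one wrap around $T_i$, they place $V_i$ on opposite sides of the level surface, which gives the opposite up/down directions in $N_{\rho^{\pm}}$.

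The main obstacle is (3), the divergence $|s_{i,j}| \to \infty$. Since $\rho(q_i)$ is parabolic and $\rho_j \to \rho$, $\ell_{\rho_j}(q_i) \to 0$, and so the tube radius $r_j$ of $U_{i,j}$ tends to infinity. Hence the meridian of $U_{i,j}$ on $\partial U_{i,j}$ has length $\asymp 2\pi \sinh(r_j) \to \infty$ in the hyperbolic metric of $N_{\rho_j}$. Under the $K_j$-bilipschitz identification $\bar\psi_j: T_i \to \partial U_{i,j}$ with $K_j \to 1$, and using that $T_i$ carries a fixed Euclidean horocyclic metric from the cusp $V_i$ in $\hat N$, the pullback $m_i + s_{i,j} l_i$ must also have length tending to infinity on $T_i$. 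Since $m_i$ and $l_i$ are fixed primitive elements of $H_1(T_i; \Z) \cong \Z^2$ of bounded length, this forces $|s_{i,j}| \to \infty$.
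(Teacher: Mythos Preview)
Your proof follows essentially the same route as the paper's: both use the comparison maps $\psi_j$ of Lemma~\ref{comparisons respect tubes} restricted to $\hat J$, apply Lemma~\ref{dehn twist} componentwise to produce $s_{i,j}$, identify $\rho_j^{\pm}$ with the representations induced by $\psi_j\circ\iota^{-1}\circ f_{0}$ and $\psi_j\circ\iota^{-1}\circ f_{1}$, read off convergence to $\rho^{\pm}=(f_{0/1})_*$ from $K_j\to 1$ (equivalently, Lemma~\ref{comparisons respect tubes}(4)), and obtain $|s_{i,j}|\to\infty$ from the fact that the meridian length on $\partial U_{i,j}$ diverges while $m_i,l_i$ have bounded length on $T_i$.

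One bookkeeping point deserves care. Following Lemma~\ref{dehn twist} verbatim you get $\rho_j^{+}=(\bar\psi_j\circ f_0)_*$ and $\rho_j^{-}=(\bar\psi_j\circ f_1)_*$, whereas the paper sets $f^+=f_{(1,\dots,1)}$ and $f^-=f_{(0,\dots,0)}$; there is a visible typo in the paper's proof here, and the sign conventions are delicate. Your paragraph on item~(2) asserts the specific up/down assignment ``is then a matter of tracking\ldots'' without actually carrying out that check. Since $f_1$ sends $\bar E$ to the upper annulus $A$ (placing $V_i$ below the level surface) and $f_0$ is homotopic to an embedding sending $\bar E$ to $B$ (placing $V_i$ above), you should verify explicitly that your identification gives the directions stated in the lemma, or at least that they are opposite in $N_{\rho^+}$ and $N_{\rho^-}$. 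The paper's own ``true by construction'' is equally terse, so this is not a gap in method, just a place where the sign bookkeeping should be pinned down rather than gestured at.
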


\begin{proof}
For  all large enough $j$, there exists a 2-bilipschitz embedding \hbox{$\psi_j: \hat J \to N_{\rho_j}$} such that each component of 
$\psi_j \circ \phi(\partial V)$ bounds a Margulis tube in $N_{\rho_j}$ and $\psi_j(\hat J)$ is disjoint from the interior of these
tubes. Moreover, $(\psi_j\circ\hat f)_*$ is conjugate to $\rho_j$. In particular, if $l$ is the longitude 
of any component $T$ of $\partial V$,
then $\psi_j(l)$ is a longitude of the Margulis tube bounded by $\psi_j(T)$.

Given $j\in\IN$, Lemma \ref{dehn twist} applied to $G=G_i$ and $e=q_i$ implies that
for all $i$, there exists $s_{i,j}$ so that if $m_i$ and $l_i$ are the meridian and longitude
of $T_i$, then $\psi_j(m_i+s_{i,j}l_i)$ bounds a meridian of $\psi_j(T_i)$.
We set $f^-=f_{(0,...,0)}$ and $f^+=f_{(1,...,1)}$ and let \hbox{$\rho^+=f^+_*$}  and $\rho^-=f^-_*$.
Lemma \ref{dehn twist} implies that $\psi_j\circ f^+$ is homotopic to $ f \circ  D_q^{w^+(q) s_j}$
and that $\psi_j\circ f^+$ is homotopic to $ f \circ  D_q^{w^-(q) s_j}$. It follows that
$\{\rho_j^+\}$ converges to $\rho^+$ and that $\{\rho_j^-\}$ converges to $\rho^-$.
This establishes property (1) and property (2) is true by construction.

It remains to establish property (3). Notice that since $\lim\ell_{\rho_j}(q_i)=0$, the diameter of
the Margulis tube bounded by $\psi_j(T_i)$ is diverging to $+\infty$. It follows that the length
of the meridian of $\psi_j(T_i)$ diverges to $+\infty$. Since $\psi_j$ is 2-bilipschitz, there
is a uniform upper bound on the lengths of $\psi_j(l_i)$ and $\psi_j(m_i)$. Since the meridian
of $\psi_j(T_i)$ is homotopic to $\psi_j(m_i+s_{i,j}l_i)$, we must have $\lim |s_{i,j}|=+\infty$.
\end{proof}

We can now easily assemble the proof of Theorem \ref{necessity}. 
We first show that 
if $\{\rho_n\}$ converges, then there is a subsequence $\{\rho_j\}$ so that
$\{\nu_j^\pm\}$ bounds projections. We choose a subsequence so that $\{\rho_j(\pi_1(S))\}$
converges geometrically.
Lemma \ref{bounded condition} implies that $\{\nu_j^{\pm}\}$ satisfies
condition (a) of the definition of bounding projections. Lemma \ref{d bounded} implies that all the
curves $d$ which are not parabolic in the algebraic limit satisfy condition (b)(i). Lemma \ref{unwrapped
behavior} implies that if $d$ is an unwrapped parabolic, then it satisfies condition (b)(i), while Lemma
\ref{wrapped behavior} combined with Lemma \ref{unwrapped
behavior} implies that any wrapped parabolic curve $d$ satisfies condition (b)(ii). Therefore,
$\{\nu_j^\pm\}$ bounds projections as claimed.

We now suppose that $\{\rho_j\}$ is a subsequence so that $\{\rho_j(\pi_1(S))\}$ converges geometrically. 
Property (1) follows from Lemma \ref{d bounded}. Property (2) follows from Lemma \ref{unwrapped behavior}
if $d$ is an unwrapped parabolic. Property (2) for wrapped parabolics follows from Lemma \ref{unwrapped behavior}
and the facts, observed in section \ref{define wrapping numbers}, that $w^-(q)=w^+(q)-1$ and that
$d$ is upward pointing if and only if $w^+(q)$ is positive. 
Property (3) comes from Theorem \ref{limitend} (\cite[Theorem 1.1]{BBCM}). 
Property (4) follows from Lemma \ref{wrapped behavior}.

In general, if $\{\rho_j\}$ is a subsequence of $\{\rho_n\}$ so that $\{\nu_j^\pm\}$ bounds projections.
Then every subsequence of $\{\rho_j\}$ has a subsequence $\{\rho_k\}$ so that $\{\rho_k(\pi_1(S))\}$ converges
geometrically. Therefore, every subsequence of $\{\rho_j\}$ has a subsequence for which properties
(1)--(4) hold. It is then easily checked that properties (1)--(4)  hold for the original sequence $\{\rho_j\}$.
\end{proof} 

\section{Multicurves from end invariants}

\label{equiv}

In this section, we prove that if the sequence of end invariants bounds projections, then we
can find a sequence of pairs of  bounded length multicurves which bounds projections.

\begin{proposition}{equivalence}
Suppose that $\{\rho_n\}$ is a sequence in $AH(S)$ with end invariants $\{\nu_n^\pm\}$.
If $\{\nu_n^\pm\}$  bounds projections, then
there exists  a subsequence $\{\rho_j\}$ and a sequence of pairs of multicurves $\{c_j^\pm\}$
such that $\{\ell_{\rho_j}(c_j^+\cup c_j^-)\}$ is bounded and $\{c_j^\pm\}$ bounds projections.
\end{proposition}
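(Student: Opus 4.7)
Pass to a subsequence so that the set $P$ of combinatorial parabolic curves is a fixed multicurve with constant partition $P = p^u \sqcup p^d \sqcup p^w$ into upward-pointing, downward-pointing, and wrapping classes, each wrapping number $w(d)$ is constant, and the sequences $\{s_n\}$ from condition (b)(ii) are well-defined. By the short curve arguments underlying Lemma \ref{d bounded}, combining Minsky's Short Curve Theorem with Theorem \ref{kgcc fact}, one may further pass to a subsequence so that $\ell_{\rho_n}(d) \to 0$ for every $d \in P$.

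Set $P^+ = p^u \cup p^w$ and $P^- = p^d \cup p^w$. For each $n$, I would construct $c_n^+$ as a multicurve consisting of: (1) every curve in $P^+$; (2) on each complementary component $R$ of $P^+$, a pants decomposition of $R$ of bounded $N_{\rho_n}$-length, chosen to cross every curve of $p^d$ in $R$, built via Bers' theorem on geometrically finite pieces of $\nu_n^+|_R$ (transferred to $N_{\rho_n}^0$ via the $2$-Lipschitz retraction from the conformal boundary to the convex core) and Lemma \ref{bounded curves above in ends} on geometrically infinite pieces, and arranged (using Theorem \ref{top is top}) so that its projection to each subsurface $W \subseteq R$ lies close to $\pi_W(\nu_n^+)$; and (3) for each $d \in p^w$, a transversal $t_n^+$ of bounded $N_{\rho_n}$-length with $\pi_Y(t_n^+) \approx D_Y^{-s_n w(d)}(\pi_Y(\mu))$, where $Y = \collar(d)$. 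The construction of $c_n^-$ is symmetric, with $P^-$, $\nu_n^-$, and transversals twisted by $D_Y^{-s_n(w(d)-1)}$ instead.

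The bound on the transversal length in (3) follows because $\ell_{\rho_n}(d) \to 0$ makes the Margulis tube of $d$ large, so Lemma \ref{bounded length pants} provides a bounded-length transversal to which the required power of $D_Y$ can be applied inside the tube without length increase. Together with (1) and (2) this yields $\ell_{\rho_n}(c_n^+ \cup c_n^-) \le C$. For the bounds-projections verification, condition (a) follows from Theorem \ref{bounded curves near geodesic}, since $\pi_S(\nu_n^\pm)$ and $\pi_S(c_n^\pm)$ both lie at bounded Hausdorff distance from $\pi_S(\CC(\rho_n, L))$. Condition (b) is checked curve by curve, using Lemma \ref{intersection bounds distance} and Lemma \ref{inequality on triples} to transfer projection bounds from $\nu_n^\pm$ to $c_n^\pm$, matching the combinatorial type in each case: $d \notin P$ inherits bounded projections; $d \in p^u$ (resp.\ $p^d$) is placed in $c_n^+$ (resp.\ $c_n^-$) only, giving the correct one-sided type; and for $d \in p^w$ the explicit transversal construction in (3) gives the annular conditions.

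The main obstacle is step (3): simultaneously controlling the $N_{\rho_n}$-length of $t_n^+$, its annular projection to $\collar(d)$, and its non-annular projections $d_W(t_n^+,\mu)$ for every non-annular subsurface $W$ with $d \subset \partial W$. The first two properties exploit the large Margulis tube of $d$; the third is handled by observing that Dehn twists supported in $\collar(d)$ do not affect non-annular subsurface projections, so those projections of $c_n^+$ are determined by the pants decomposition built in (2) and inherit bounded projections from the hypothesis that $m^{na}(\nu_n^+,d,\mu)$ is eventually bounded. A secondary issue is enforcing the constraint in (2) that the pants decomposition of $R$ avoid curves of $p^d$ while crossing them; this reflects the authors' indicated strategy of constructing $c_n^\pm$ as minimal length pants decompositions subject to combinatorial constraints.
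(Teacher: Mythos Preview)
Your construction contains an internal contradiction: you ask $c_n^+$ to be a multicurve that contains every curve in $P^+ \supset p^w$ (step~(1)) and also, for each $d \in p^w$, a transversal $t_n^+$ to $d$ (step~(3)). Since $t_n^+$ intersects $d$, these cannot both lie in a multicurve. The wrapping curves cannot sit inside $c_n^\pm$; they must be \emph{crossed} by $c_n^\pm$ so that the annular projections $\pi_{\collar(d)}(c_n^\pm)$ are defined and track those of $\nu_n^\pm$.

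Even if you drop $p^w$ from step~(1) and keep only the transversals, step~(3) has a real gap. You want a bounded $N_{\rho_n}$-length transversal $t_n^+$ with prescribed annular projection $\pi_Y(t_n^+) \approx D_Y^{-s_n w(d)}(\pi_Y(\mu))$, and you argue that Dehn twisting a bounded-length transversal ``inside the tube'' does not increase length. But the number of twists required differs from whatever a Bers-type transversal happens to have by an unbounded amount (since $|s_n|\to\infty$), and there is no a priori reason why $|s_n|\cdot\ell_{\rho_n}(d)$, or the relevant tube-boundary quantity, stays bounded. The set $\mathcal{C}(\rho_n,L)$ has bounded diameter in $\mathcal{C}(\collar(d))$ once $L$ is fixed, so you cannot hit an arbitrary annular projection with a curve of length at most $L$.

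The paper sidesteps both problems by building $c_n^\pm$ from lengths in the conformal structures $\nu_n^\pm$ rather than in $N_{\rho_n}$. One takes $c_n^+$ to be a minimal $\nu_n^+$-length pants decomposition subject to the constraint that it cross every downward-pointing and every wrapping combinatorial parabolic (and every short curve on $\nu_n^-$); a uniform lower bound on the $\nu_n^+$-length of those curves (Lemmas ``triple app'' and ``uniform max'') keeps $\ell_{\nu_n^+}(c_n^+)$ bounded, and then Bers' inequality gives the $N_{\rho_n}$-bound. Because $c_n^+$ and $\mu(\nu_n^+)$ both have bounded $\nu_n^+$-length, their intersection number is bounded, and Lemma~\ref{intersection bounds distance} gives $d_Y(c_n^+,\nu_n^+)\le K$ for \emph{every} subsurface $Y$ crossed by $c_n^+$, including $\collar(d)$. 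The required annular behaviour for wrapping parabolics is then inherited directly from condition~(b)(ii) for $\nu_n^\pm$, with no explicit transversal construction needed.
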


The moral here is quite simple, although unpleasant technical difficulties arise in the actual proof.
If $\{\rho_n\}$ is a sequence of quasifuchsian groups, one might hope to be able to choose
$c_n^+$ and $c_n^-$ to be minimal length pants decompositions of the top and bottom conformal
boundaries of $N_n$. There are three technical issues that cause this simple algorithm to fail:
\begin{itemize}
\item The $c_n^+$ and $c_n^-$ cannot have curves in common.

\item A downward (upward) pointing unwrapped combinatorial parabolic cannot be in $c_n^+$ ($c_n^-$).

\item A wrapped combinatorial parabolic cannot be in either $c_n^+$ or $c_n^-$.
\end{itemize}
It is easy to construct examples where the minimal length pants decompositions fail to
satisfy any of these technical constraints. 
To deal with these issues, we will choose $c_n^+$ to be a minimal length pants decomposition
of $\nu_n^+$ which intersects any downward-pointing combinatorial parabolic, any combinatorial
wrapped parabolic and any ``sufficiently short'' curve on $\nu_n^-$. We then choose $c_n^-$ to
be a  minimal length pants decomposition
of $\nu_n^-$ which intersects any curve in $c_n^+$, any downward-pointing combinatorial parabolic,  and any combinatorial
wrapped parabolic.

In general, one might hope to choose $c_n^+$ to consist of a minimal length pants decomposition of each 
geometrically finite subsurface on the ``top,'' a curve for each upward-pointing parabolic
and a pants decomposition of each subsurface supporting an
upward-pointing geometrically infinite end which is ``close enough'' to the ending lamination. 
We will again need to be more careful in the actual proof.

\begin{proof}
We  first pass to a subsequence, still called $\{\rho_n\}$, so that if $d$ is a curve and $\beta\in\{\pm\}$,
then either $m(\nu_n^\beta,d,\mu)\to\infty$ or $\{m(\nu_n^\beta,d,\mu)\}$ is eventually bounded.
Let $b^\beta$ be the collection of curves  such that
$m(\nu_n^\beta,d,\mu)\to\infty$ if and only if  $d$  is in $b^\beta$. 
If $d$ lies in $b^+$ or $b^-$, then $d$ is a combinatorial parabolic, while if $d$ lies in both
$b^+$ and $b^-$, then $d$ is a combinatorial wrapped parabolic.

The following lemma implies that $b^+$ and $b^-$ are multicurves.

\begin{lemma}{triple app}
Suppose that $\{\rho_n\}$ is a sequence in $AH(S)$ with end invariants $\{\nu_n^\pm\}$ and
 $\{\nu_n^\pm\}$  bounds projections. If $d$ is either an upward-pointing or wrapped combinatorial parabolic 
and $c$ intersects $d$, then $\{m(\nu_n^+,c,\mu)\}$ is eventually bounded.

Similarly, if $d$ is either a downward-pointing or wrapped combinatorial parabolic 
and $c$ intersects $d$, then $\{m(\nu_n^-,c,\mu)\}$ is eventually bounded.
\end{lemma}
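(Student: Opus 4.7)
The plan is to argue by contradiction. Suppose $\{m(\nu_n^+,c,\mu)\}$ is not eventually bounded; passing to a subsequence, $m(\nu_n^+,c,\mu)\to\infty$. By the definition of $m$, either $\ell_{\nu_n^+}(c)\to 0$ or there exist essential subsurfaces $Y_n$ with $c\subset\partial Y_n$ and $d_{Y_n}(\nu_n^+,\mu)\to\infty$. The first step is to fold the length case into the subsurface case by taking $Y_n=\collar(c)$: if $c$ becomes a curve component of $\nu_n^+$, then $\pi_{\collar(c)}(\nu_n^+)=\emptyset$ by the convention of Section \ref{curve complexes}, so $d_{\collar(c)}(\nu_n^+,\mu)=+\infty$; and if instead $c$ merely becomes short in a hyperbolic structure on a geometrically finite piece, then the minimal-length transversal to $c$ in the generalized marking $\mu(\nu_n^+)$ wraps around $c$ arbitrarily many times, so (using that $\mu$ is binding and hence has a curve crossing $c$) $d_{\collar(c)}(\nu_n^+,\mu)\to\infty$. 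After this reduction I may always assume there are essential subsurfaces $Y_n$ with $c\subset\partial Y_n$ and $d_{Y_n}(\nu_n^+,\mu)\to\infty$.

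Because $d$ is an upward-pointing or wrapped combinatorial parabolic, $\{m(\nu_n^+,d,\mu)\}$ is by definition not eventually bounded, and the same reduction produces essential subsurfaces $W_n$ with $d\subset\partial W_n$ and $d_{W_n}(\nu_n^+,\mu)\to\infty$. I claim $Y_n$ and $W_n$ overlap: if $Y_n$ were nested in $W_n$ then $c\subset\partial Y_n$ would be either essential in the interior of $W_n$ (hence disjoint from $\partial W_n$) or parallel to a component of $\partial W_n$ (hence disjoint from all other components), so in either case disjoint from $d\subset\partial W_n$, contradicting $i(c,d)\ne 0$; the reverse nesting and the disjoint case are excluded identically. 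Thus $Y_n$ and $W_n$ are overlapping essential subsurfaces, both of which meet $\nu_n^+$, and Lemma \ref{inequality on triples} applies.

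Applied to the overlapping pair $(Y_n,W_n)$ with marking $\nu_n^+$, Behrstock's inequality reads $d_{Y_n}(\nu_n^+,\partial W_n)\geq 10 \Rightarrow d_{W_n}(\nu_n^+,\partial Y_n)\leq 4$. Since $d$ is a component of $\partial W_n$, Lemma \ref{intersection bounds distance} gives the uniform bound $d_{Y_n}(\mu,d)\leq 2i(\mu,d)+1$, hence
$$d_{Y_n}(\nu_n^+,\partial W_n)\;\geq\; d_{Y_n}(\nu_n^+,d)-O(1)\;\geq\; d_{Y_n}(\nu_n^+,\mu)-2i(\mu,d)-O(1)\;\longrightarrow\;\infty,$$
so for $n$ large Behrstock's hypothesis is satisfied and we conclude $d_{W_n}(\nu_n^+,\partial Y_n)\leq 4$, and therefore $d_{W_n}(\nu_n^+,c)=O(1)$ since $c\subset\partial Y_n$. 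One last application of Lemma \ref{intersection bounds distance} then yields
$$d_{W_n}(\nu_n^+,\mu)\;\leq\; d_{W_n}(\nu_n^+,c)+d_{W_n}(c,\mu)\;\leq\; O(1)+2i(c,\mu)+1,$$
a uniform bound contradicting $d_{W_n}(\nu_n^+,\mu)\to\infty$.

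The hard part will be the length-to-annular-projection reduction in the first paragraph, since the implication $\ell_{\nu_n^+}(c)\to 0\Rightarrow d_{\collar(c)}(\nu_n^+,\mu)\to\infty$ requires keeping track of the two possibilities (curve component of $\nu_n^+$ versus short curve in a geometrically finite part) and invoking the standard hyperbolic-geometry fact that short geodesics force their minimal-length transversals to wrap. Once this reduction is in place, the Behrstock argument is a single clean step, applied uniformly to annular and non-annular subsurfaces alike. The symmetric statement of the lemma follows by exchanging the roles of $+$ and $-$ throughout.
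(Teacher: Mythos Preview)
Your Behrstock step is essentially the same as the paper's, and the overlap argument for $Y_n$ and $W_n$ is fine. The gap is in the length-to-annular reduction. The implication
\[
\ell_{\nu_n^+}(c)\to 0 \ \Longrightarrow\ d_{\collar(c)}(\nu_n^+,\mu)\to\infty
\]
is false when $c$ is merely a short curve in a geometrically finite component of $\nu_n^+$. In Fenchel--Nielsen coordinates one can send $\ell(c)\to 0$ while keeping the twist bounded; the minimal-length transversal to $c$ then crosses the long collar nearly perpendicularly and does \emph{not} wrap around $c$. So $\pi_{\collar(c)}(\mu(\nu_n^+))$ stays at bounded distance from $\pi_{\collar(c)}(\mu)$, and your choice $Y_n=\collar(c)$ does not give $d_{Y_n}(\nu_n^+,\mu)\to\infty$. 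The same objection applies to your reduction for $d$. (There is a secondary issue: when $c$ is a curve component of $\nu_n^+$ you correctly get $\pi_{\collar(c)}(\nu_n^+)=\emptyset$, but then the hypothesis of Lemma~\ref{inequality on triples} that the marking intersect both subsurfaces fails, so Behrstock does not apply with $Y_n=\collar(c)$ either.)

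The paper sidesteps the reduction entirely. It first disposes of the case $\ell_{\nu_n^+}(d)\to 0$ directly: then $d$ lies in $\mathrm{base}(\mu(\nu_n^+))$, and since $c$ crosses $d$, every subsurface $Z$ with $c\subset\partial Z$ is crossed by $d$, giving $d_Z(\nu_n^+,\mu)\le d_Z(d,\mu)+O(1)\le 2i(d,\mu)+O(1)$, a uniform bound. Swapping $c$ and $d$ in this observation shows that the standing hypothesis $m(\nu_n^+,d,\mu)\to\infty$ forces $\{\ell_{\nu_n^+}(c)\}$ to be bounded away from zero, which handles the $1/\ell$ term of $m(\nu_n^+,c,\mu)$. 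One is then in the purely projective case (there exist $Y_n$ with $d\subset\partial Y_n$ and $d_{Y_n}(\nu_n^+,\mu)\to\infty$), and a single Behrstock application bounds $d_Z(\nu_n^+,\mu)$ for every $Z$ with $c\subset\partial Z$. You can repair your argument by replacing the annular reduction with this direct treatment of the length term.
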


\begin{proof}
We give the proof in the case that $d$ is either an upward-pointing combinatorial parabolic or a
combinatorial wrapped parabolic, in which case \hbox{$m(\nu_n^+,d,\mu)\to\infty$}. The proof of the other
case is analogous.

First suppose that $\ell_{\nu_n^+}(d)\to 0$, so $l_{\nu_n^+}(c)\to\infty$ and $d$ is a curve 
in the base of the (generalized) marking $\mu(\nu_ n^+)$ (defined in section \ref{end invariants}) 
associated to $\nu_n^+$  for all large enough $n$.
In particular,  if $c\in\partial Z$, then 
$$d_Z(\mu,\nu_n^+)\leq d_Z(\mu,d)+ d_Z(d,\mu(\nu_ n^+))\le 2 i(\mu,d)+6.$$ 
(The second inequality follows from Lemma \ref{intersection bounds distance} and the fact that any two curves in
$\mu(\nu_n^+)$ intersect at most twice.)
Therefore,  if $\ell_{\nu_n^+}(d)\to 0$, then
$\{m(\nu_n^+,c,\mu)\}$ is eventually bounded.

Notice that, by reversing the roles of $c$ and $d$ in the previous sentence, we see that if $m(\nu_n^+,d,\mu)\to\infty$, 
then $\{\ell_{\nu_n^+}(c)\}$ is bounded away from zero.

So, we may suppose that  both $\{\ell_{\nu_n^+}(d)\}$ and $\{\ell_{\nu_n^+}(c)\}$ are  bounded away from zero,
and that \hbox{$\sup_{d\subset\partial Y}d_Y(\nu_n^+,\mu)\to\infty$}.  
Therefore, there exists a sequence of subsurfaces $Y_n$ with
\hbox{$d\subset \partial Y_n$}, so that \hbox{$d_{Y_n}(\nu_n^+,\mu)\to\infty$}. It follows that
\hbox{$d_{Y_n}(\nu_n^+,c)\to\infty$}.
Lemma \ref{inequality on triples} then implies
that if $Z$ is a subsurface with $c\in\partial Z$, then 
$$d_Z(\partial Y_n,\nu_n^+)\le  4$$
for all large enough $n$. So,
$$d_Z(\nu_n^+,\mu)\le d_Z(\partial Y_n,\nu_n^+)+d_Z(\partial Y_n,\mu)\le 4+d_Z(d,\mu)+1$$
for all large enough $n$. Since $d_Z(d,\mu)$ is bounded above by a function of $i(d,\mu)$,
$\{\sup_{c\subset\partial Z}d_Z(\nu_n^+,\mu)\}$ is eventually bounded.
Therefore, again $\{m(\nu_n^+,c,\mu)\}$ is eventually bounded.
 \end{proof}

We next claim that a curve cannot be ``short'' on both
the top and the bottom.

\begin{lemma}{uniform max}
If $\{\rho_n\}$ is a sequence in $AH(S)$ with end invariants $\{\nu_n^\pm\}$ and
 $\{\nu_n^\pm\}$  bounds projections, then
there exists $\delta_1>0$, so that if $d$ is any curve on $S$, then
$$\max\{\ell_{\nu_n^+}(d),\ell_{\nu_n^-}(d)\}>\delta_1.$$
\end{lemma}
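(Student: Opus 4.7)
The plan is to argue by contradiction. Suppose no such $\delta_1>0$ exists; then there is a sequence of pairs $(n_k,d_k)$ with $\max\{\ell_{\nu_{n_k}^+}(d_k),\ell_{\nu_{n_k}^-}(d_k)\}\to 0$. The collar lemma caps the number of curves shorter than the Margulis constant on each hyperbolic surface $\nu_n^\pm$ by a constant depending only on $S$, and, because the upward- and downward-pointing parabolic loci of any single $\rho_n$ are disjoint, the maximum is pointwise positive at every fixed $n$. Passing to a subsequence I may therefore assume $n_k\to\infty$, and, refining again, that $(d_k)$ is either eventually constant or eventually pairwise distinct.

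If $(d_k)$ is constant equal to $d$, then $1/\ell_{\nu_{n_k}^\pm}(d)\to\infty$ forces $m(\nu_{n_k}^\pm,d,\mu)\to\infty$; the initial subsequence passage performed at the start of the proof of Proposition~\ref{equivalence} promotes this divergence to the full subsequence, placing $d\in b^+\cap b^-$. Condition (ii) of bounding projections then demands that $m^{na}(\nu_n^\pm,d,\mu)$ be eventually bounded, but $m^{na}$ still contains the term $1/\ell_{\nu_n^\pm}(d)$, which blows up along $n_k$: contradiction.

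When $(d_k)$ is pairwise distinct I apply Lemma~\ref{blow up subsurface} to produce, after a further subsequence, an essential subsurface $Y\subseteq S$ with $d_Y(\mu,d_k)\to\infty$. Since $d_k$ is short on both $\nu_{n_k}^\pm$ it lies in the base of each generalized marking $\mu(\nu_{n_k}^\pm)$, and disjoint components of a marking yield projections to $Y$ a bounded distance apart, so $d_Y(\nu_{n_k}^\pm,d_k)=O(1)$ and therefore $d_Y(\nu_{n_k}^\pm,\mu)\to\infty$. If $Y=S$ this contradicts condition (a) immediately, since (a) uniformly bounds $d_S(\nu_n^\pm,\mu)$. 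If $Y$ is a proper non-annular subsurface, choose any $c\subset\partial Y$: we have $Y\neq\collar(c)$, so $Y$ contributes to the supremum defining $m^{na}(\nu_n^\pm,c,\mu)$; the divergence above places $c\in b^+\cap b^-$ while simultaneously violating the eventual boundedness of $m^{na}(\nu_n^\pm,c,\mu)$ demanded by condition (ii) applied to $c$.

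The main obstacle is the remaining case $Y=\collar(\lambda_0)$, because annular subsurfaces based at $\lambda_0$ are excluded from $m^{na}$ and the argument above fails. To handle it I use condition (ii) applied to $\lambda_0\in b^+\cap b^-$ in full: it provides an integer $w=w(\lambda_0)$ and a sequence $\{s_n\}$ with $|s_n|\to\infty$ such that $d_Y(D_Y^{s_nw}(\nu_n^+),\mu)$ and $d_Y(D_Y^{s_n(w-1)}(\nu_n^-),\mu)$ are eventually bounded. Reading these in the integer coordinate on $\mathcal{C}(Y)$, on which $D_Y$ acts by translation, yields $d_Y(\nu_n^+,\mu)=|s_nw|+O(1)$ and $d_Y(\nu_n^-,\mu)=|s_n(w-1)|+O(1)$. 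Combining with the fact that both of these equal $d_Y(d_{n_k},\mu)+O(1)$ along $n_k$ and subtracting forces $(|w|-|w-1|)|s_{n_k}|=O(1)$; since $|w|-|w-1|=\pm 1$ for every integer $w$, this gives $|s_{n_k}|=O(1)$, contradicting $|s_{n_k}|\to\infty$.
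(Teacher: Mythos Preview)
Your proof is correct and follows essentially the same strategy as the paper's: argue by contradiction, pass to a subsequence of curves $d_k$ short on both sides, split into the constant and non-constant cases, and in the latter invoke Lemma~\ref{blow up subsurface} to find a subsurface $Y$ on which projections blow up while $d_Y(\nu_{n_k}^+,\nu_{n_k}^-)$ stays bounded. The paper compresses your three-way case split ($Y=S$, $Y$ proper non-annular, $Y$ annular) into the single sentence ``$d_Y(\mu,\nu_n^\pm)\to\infty$ and $d_Y(\nu_n^+,\nu_n^-)\le 5$, which contradicts both conditions (b)(i) and (b)(ii)''; your version just unpacks this. In the annular case the paper would use the bound $d_Y(\nu_n^+,\nu_n^-)\le 5$ directly to force $|s_{n_k}|$ bounded, whereas you route through $|s_{n_k}w|\approx|s_{n_k}(w-1)|$ and $||w|-|w-1||=1$; the two computations are equivalent.

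Two small imprecisions are worth flagging. First, your justification in the $Y=S$ case --- ``(a) uniformly bounds $d_S(\nu_n^\pm,\mu)$'' --- is not literally what condition (a) says: (a) only asserts that each geodesic from $\pi_S(\nu_n^+)$ to $\pi_S(\nu_n^-)$ comes within $K$ of $\mu$, which does not by itself bound $d_S(\nu_n^\pm,\mu)$. What makes your conclusion valid here is the additional fact, already established, that $d_S(\nu_{n_k}^+,\nu_{n_k}^-)=O(1)$, so the geodesic is short and hence both endpoints lie near $\mu$. Second, the claim that the upward- and downward-pointing parabolic loci of a single $\rho_n$ are disjoint is not true in general (a curve can be parabolic on both sides, as in a maximal cusp); however, the bounding-projections hypothesis does force this to hold for all large $n$ --- a doubly parabolic $d$ at infinitely many $n$ would give $m^{na}(\nu_n^\pm,d,\mu)=\infty$ along a subsequence, violating (b) --- so your conclusion that $n_k\to\infty$ survives. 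The paper's proof has the same implicit assumption.
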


\begin{proof}
If not, we may pass to a subsequence so that there exist curves $a_n$ so
that $\ell_{\nu_n^+}(a_n)+\ell_{\nu_n^-}(a_n)\to 0$. Then $a_n$ is a curve of $ \mu ( \nu_n^\pm)$ 
hence $d_Y ( a_n , \nu_n^\pm)\le 5 $ for any subsurface $Y$ that intersects $ a_n $ essentially.
If $\{a_n\}$ admits a constant subsequence $a$, then $m^{na}(\nu_n^+,a,\mu)\to\infty$ and $m^{na}(\nu_n^-,a,\mu)\to\infty$ 
which is not allowed by condition (b) of the definition of bounding projections. 
If not, by Lemma \ref{blow up subsurface},
there is a subsurface $Y$ such that, after taking a subsequence, \hbox{$ d_Y( \mu, a_n ) \to\infty$}.
Then we have \hbox{$ d_Y( \mu, \nu_n^\pm ) \to\infty$} 
and \hbox{$ d_Y (\nu_n^+,\nu_n^-)\le 5 $} which contradicts  both conditions (b)(i) and (b)(ii).
Therefore, no such subsequence can exist and we
obtain the desired inequality.
\end{proof}

We recall that the Collar Lemma (\cite[Theorem 4.4.6]{buser}) implies that any two closed geodesics of length at
most $2\sinh^{-1}(1)$ on any hyperbolic surface cannot intersect.
Let $e_n^\beta$ denote the multicurve on $S$ consisting of curves $d$ such that
$$\ell_{\nu_n^\beta}(d)<\min\{2\sinh^{-1}(1),\delta_1\}.$$

We now describe the construction of $c_n^\pm$ in the case that
$\{\rho_n\}$ is a sequence of quasifuchsian representations, so $\nu_n^\pm\subset{\mathcal{T}}(S)$ for all $n$.
Among the pants decompositions of $S$ which
cross every curve in \hbox{$b^-\cup e_n^-$}, choose one, $c_n^+$,  with minimal length in $\nu_n^+$ . 
Then among the pants decompositions of $S$ which cross every curve in \hbox{$b^+\cup c_n^+$}
choose one, $c_n^-$, with minimal length in $\nu_n^-$. We observe that the resulting sequences have bounded length.

\begin{lemma}{bounded length}
The sequences $\{l_{\rho_n}(c_n^+)\}$ and $\{l_{\rho_n}(c_n^-)\}$ are both bounded.
\end{lemma}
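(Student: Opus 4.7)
The plan is to exhibit, for each $n$, a pants decomposition $p_n$ of $S$ crossing every curve in $X_n := b^-\cup e_n^-$ with $\ell_{\nu_n^+}(p_n)$ uniformly bounded. By the minimality of $c_n^+$ among such pants decompositions we will then have $\ell_{\nu_n^+}(c_n^+)\le \ell_{\nu_n^+}(p_n)$ uniformly bounded, and the uniformly Lipschitz nearest-point retraction from the conformal boundary of the quasi-Fuchsian manifold $N_{\rho_n}$ to its convex core (Epstein--Marden--Markovic) will convert this into a uniform bound on $\ell_{\rho_n}(c_n^+)$. The argument for $c_n^-$ will be symmetric, using $\nu_n^-$ and $b^+\cup c_n^+$ in place of $\nu_n^+$ and $X_n$.

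To build $p_n$, the starting point is a Bers pants decomposition $B_n$ of the hyperbolic surface $\nu_n^+$, whose length is at most the Bers constant $L_B$. The plan is to modify $B_n$ so that every curve of $X_n$ is crossed. Since both $b^-$ and $e_n^-$ are multicurves on $S$, the set $X_n$ has boundedly many components and only finitely many local modifications will be required. Each $d\in X_n$ not already crossed by $B_n$ must be isotopic to some pants curve $d'\in B_n$, and the modification replaces $d'$ by a bounded-length transversal chosen inside the union of the two pairs of pants of $B_n$ adjacent to $d'$.

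The existence of this transversal is the heart of the matter and splits by cases. For $d\in e_n^-$, Lemma \ref{uniform max} forces $\ell_{\nu_n^+}(d)>\delta_1$, so the collar of $d$ in $\nu_n^+$ has bounded geometry and Lemma \ref{bounded length pants} produces a pants decomposition containing a bounded-length transversal to $d$. For $d\in b^-\setminus b^+$ the same argument works, because the eventual boundedness of $\{m(\nu_n^+,d,\mu)\}$ gives an eventual lower bound on $\ell_{\nu_n^+}(d)$. The main obstacle is the case of a combinatorial wrapped parabolic $d\in b^+\cap b^-$, where $\ell_{\nu_n^+}(d)$ may tend to $0$ and no curve crossing $d$ can be short in $\nu_n^+$. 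To treat this case I intend to invoke the wrapping structure of Section \ref{wrap1}: after applying the Dehn-twist correction $D_d^{s_n w^+(d)}$ supplied by Lemma \ref{wrapped behavior}, the corrected end invariant no longer pinches $d$ from the top, so a bounded-length transversal exists for the corrected structure, and undoing the twist yields a transversal in $(S,\nu_n^+)$ whose large length in $\nu_n^+$ is compensated inside $\rho_n$ by the wrapping cancellation.

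Finally, the bound for $c_n^-$ follows by repeating the construction with $\nu_n^-$ in place of $\nu_n^+$ and $b^+\cup c_n^+$ in place of $X_n$. The only new crossing condition is against the curves of $c_n^+$, but these were just shown to have bounded length in $\nu_n^+$, so by Lemma \ref{uniform max} they satisfy $\ell_{\nu_n^-}(c)>\delta_1$ for each $c\in c_n^+$; consequently the same Bers-plus-modification recipe supplies a bounded-length competitor against which to test the minimality of $c_n^-$.
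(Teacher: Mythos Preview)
Your overall strategy---find a competing pants decomposition of bounded $\nu_n^+$-length crossing every curve of $b^-\cup e_n^-$, then convert to a $\rho_n$-length bound---is exactly what the paper does. But you have created a spurious difficulty in the wrapped-parabolic case, and your proposed fix for it is circular.

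For $d\in b^+\cap b^-$ you assert that $\ell_{\nu_n^+}(d)$ may tend to $0$. It cannot. A combinatorial wrapped parabolic satisfies condition (b)(ii) of the definition of bounding projections, whose first clause is precisely that $\{m^{na}(\nu_n^+,d,\mu)\}$ is eventually bounded; since
\[
m^{na}(\nu_n^+,d,\mu)\ \ge\ \frac{1}{\ell_{\nu_n^+}(d)},
\]
this gives a uniform lower bound on $\ell_{\nu_n^+}(d)$. The same argument works for $d\in b^-\setminus b^+$ using (b)(i). Thus every curve in $b^-$ (and, via Lemma~\ref{uniform max}, every curve in $e_n^-$) has $\nu_n^+$-length bounded below by some $\delta>0$, and since there are boundedly many such curves a standard Bers-type argument produces the desired bounded-length competitor. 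No case distinction between wrapped and unwrapped parabolics is needed.

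Your proposed workaround via Lemma~\ref{wrapped behavior} would be circular in any case: that lemma concerns a sequence already known to converge algebraically and geometrically, while here we are inside the proof of Proposition~\ref{equivalence}, which is a step \emph{toward} establishing convergence.

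Finally, your bound for $c_n^-$ misapplies Lemma~\ref{uniform max}: knowing $\ell_{\nu_n^+}(c)$ is \emph{bounded above} for $c\in c_n^+$ does not force $\ell_{\nu_n^-}(c)>\delta_1$, since the lemma only controls the maximum. The correct observation (as in the paper) is that $c_n^+$ was chosen to cross every curve of $e_n^-$; hence no $c\in c_n^+$ lies in $e_n^-$, which directly yields $\ell_{\nu_n^-}(c)\ge\min\{2\sinh^{-1}(1),\delta_1\}$.
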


\begin{proof}
Notice that since $\{m^{na}(\nu_n^{-\beta},d,\mu)\}$ is bounded  for all $d\in b^\beta$ and $b^\beta$ has
finitely many components, there
exists $\delta_2>0$ such that if $d\in b^\beta$, then 
$$\ell_{\nu_n^{-\beta}}(d)>\delta_2.$$

Lemma \ref{uniform max} implies that if $d$ is a component of $e_n^\beta$,
then $\ell_{\nu_n^{-\beta}}(d)\ge \delta_1$.

Therefore, there is a lower bound, $\min\{\delta_2,\delta_1\}$, on the length,  in $\nu_n^+$, of every curve in
$b^-\cup e_n^-$. Since  $b^-\cup e_n^-$ contains a bounded number of curves, it is an easy exercise to
check that there is an upper bound on the length of a minimal length pants decomposition of $\nu_n^+$
intersecting  $b^-\cup e_n^-$, hence an upper bound on the length, in $\nu_n^+$, of $c_n^+$.

Since $c_n^+$ crosses every curve in $e_n^-$, every curve in $c_n^+$ has length, in $\nu_n^-$, at least $\min\{2\sinh^{-1}(1), \delta_1 \}$. 
Therefore, there is a lower bound,
$\min\{\delta_2,\delta_1,2\sinh^{-1}(1)\}$, on the length, in $\nu_n^-$, of every curve in  $c_n^+\cup b^+$.
It again follows that there is an upper bound on the length of $c_n^-$.

Bers \cite[Theorem 3]{bers-slice} proved that if $d$ is any curve on $S$, then
$$\ell_{\rho_n}(d)\le 2\ell_{\nu_n^\beta}(d)$$
for either $\beta=+$ or $\beta=-$. It follows that both 
$\{l_{\rho_n}(c_n^+)\}$ and $\{l_{\rho_n}(c_n^-)\}$ are bounded.
\end{proof}

Since $c_n^\beta$ and the base of the  marking $\mu(\nu_n^\beta)$ both have uniformly
bounded length in $\nu_n^\beta$,
there is a uniform upper bound on the intersection number between
$c_n^\beta $ and any base curve of the marking $\mu(\nu_n^\beta)$. 
Therefore, Lemma \ref{intersection bounds distance}
implies that there exists $K$ so that if
$Y\subseteq S$ is not a component of $\collar(c_n^\beta)$ or $\collar({\rm base}(\nu_n^\beta))$, then
\begin{equation}\label{near}
d_Y(c_n^\beta,\nu_n^\beta)\le K.
\end{equation}
If $Y$ is a component of $\collar({\rm base}(\nu_n^\beta))$ and $c_n^\beta$ crosses $Y$,
then, since $c_n^\beta$ has bounded length, there is a lower bound on the length of the core
curve of $Y$ and hence an upper bound on the length of the transversal  to $Y$ in the marking $\mu(\nu_n^\beta)$.
Again, this implies an upper bound on the intersection number between the transversal and $c_n^\beta$, so
inequality (\ref{near}) still holds.

Finally, we pass to a subsequence so that, for each $\beta$,
if $d$ is any curve then $d$  either lies in $c_n^\beta$ for all $n$ or for only
finitely many $n$. Since $c_n^\beta$ is a pants decomposition and $c_n^-$ crosses every curve in $ c_n^+ $, 
then for any curve $d$ there exists $\beta(d)\in\{\pm\}$ and $N(d)\in\IZ$ such such that $c_n^\beta$
crosses  $d$ for all $n\ge N(d)$.

The next lemma shows that the properties we have established suffice to show that $\{c_n^\pm\}$  bounds projections. 
We give the statement and the proof in the general case (i.e. $\rho_n$ is not assumed to be quasifuchsian).

\begin{lemma}{predicts}
Let $\{ \nu_n^\pm \} $ be a sequence of pairs of end invariants which bounds projections and 
let $\{c_n^\pm\}$ be a sequence of pairs of multicurves on $S$ such that
\begin{enumerate}
\item there exists $K' >0 $ such that $d_S(c_n^\beta,\nu_n^\beta)\le K' $,
\item 
there exists $K>0$ such that if $d\in\CC(S)$, then there exists $M(d)\in\N$ such that
if $Y\subset S$ with $d\subset\partial Y$,  $c_n^\beta$ crosses $d$,  then 
\begin{equation}\label{nearby}
d_Y(\nu_n^\beta,c_n^\beta)\le K,
\end{equation} 
for any $\beta\in\{\pm\}$ and any $n\ge M(d)$,
\item 
if $d$ is a wrapped combinatorial parabolic, then $c_n^\beta$ intersects $d$ 
for any \hbox{$\beta \in \{\pm\}$},
\item
if $d$ is an unwrapped downward (respectivley upward) pointing combinatorial parabolic,
then $c_n^+$ (resp. $c_n^-$) intersects $d$, and
\item
if $d$ is not a combinatorial parabolic, then there exists $\beta(d)\in\{\pm\}$ and 
$N(d)\in\N$ such that $c_n^{\beta(d)}$ crosses $d$ for all $n\ge N(d)$.
\end{enumerate}
Then $\{c_n^\pm\}$ bounds projections.
\end{lemma}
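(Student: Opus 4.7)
The plan is to transfer the bounding-projections property from $\{\nu_n^\pm\}$ to $\{c_n^\pm\}$ using hypotheses (1) and (2) as closeness estimates, combined with Gromov hyperbolicity of $\CC(S)$. For condition (a), since $\CC(S)$ is Gromov hyperbolic (Masur--Minsky), geodesic quadrilaterals are uniformly thin. Hypothesis (1) gives $d_S(c_n^\beta,\nu_n^\beta)\le K'$, so any geodesic from $\pi_S(c_n^+)$ to $\pi_S(c_n^-)$ is at uniformly bounded Hausdorff distance from any geodesic from $\pi_S(\nu_n^+)$ to $\pi_S(\nu_n^-)$; since the latter lies boundedly close to $\mu$ by the bounding-projections hypothesis on $\{\nu_n^\pm\}$, so does the former.

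The engine for condition (b) is a single triangle inequality. Whenever $Y\subseteq S$ satisfies $d\subset\partial Y$ and $c_n^\beta$ crosses $d$, hypothesis (2) gives
$$d_Y(c_n^\beta,\mu)\le d_Y(c_n^\beta,\nu_n^\beta)+d_Y(\nu_n^\beta,\mu)\le K+d_Y(\nu_n^\beta,\mu)$$
for all $n\ge M(d)$. I split $d\in\CC(S)$ into three cases. If $d$ is not a combinatorial parabolic for $\{\nu_n^\pm\}$, both $\{m(\nu_n^\pm,d,\mu)\}$ are eventually bounded and condition (5) supplies a $\beta(d)$ with $c_n^{\beta(d)}$ eventually crossing $d$; the estimate transfers the bound to $\{m(c_n^{\beta(d)},d,\mu)\}$, giving case (b)(i). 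If $d$ is an unwrapped upward- or downward-pointing combinatorial parabolic, condition (4) supplies the required crossing ($c_n^-$ or $c_n^+$ respectively) and the same estimate again yields case (b)(i).

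The main obstacle is the case when $d$ is a wrapped combinatorial parabolic. By hypothesis there exist $w(d)\in\Z$ and $\{s_n\}$ with $\lim|s_n|=\infty$ such that $\{d_Y(D_Y^{s_n w(d)}(\nu_n^+),\mu)\}$ and $\{d_Y(D_Y^{s_n(w(d)-1)}(\nu_n^-),\mu)\}$ are eventually bounded for $Y=\collar(d)$, and $\{m^{na}(\nu_n^\pm,d,\mu)\}$ are eventually bounded. Condition (3) ensures both $c_n^+$ and $c_n^-$ cross $d$, so for non-annular $Y$ with $d\subset\partial Y$ the master estimate transfers the bound to $m^{na}(c_n^\pm,d,\mu)$. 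For $Y=\collar(d)$, I exploit the fact that the Dehn twist $D_Y$ acts by isometry on $\CC(Y)$ to write
$$d_Y(D_Y^{s_n w(d)}(c_n^+),\mu)\le d_Y(c_n^+,\nu_n^+)+d_Y(D_Y^{s_n w(d)}(\nu_n^+),\mu)\le K+d_Y(D_Y^{s_n w(d)}(\nu_n^+),\mu),$$
which is eventually bounded; the analogous estimate for $c_n^-$ uses the exponent $s_n(w(d)-1)$. Thus the same pair $(w(d),\{s_n\})$ witnesses case (b)(ii) for $\{c_n^\pm\}$, and the delicate point is precisely the observation that the wrapping data transfers unchanged under the isometric action of $D_Y$ on the annular complex.
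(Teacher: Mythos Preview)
Your proof is correct and follows essentially the same approach as the paper's. The paper handles condition (a) identically via hyperbolicity of $\CC(S)$, and for condition (b) uses the same case split and the same triangle-inequality transfer via hypothesis (2); the only difference is that the paper is terser in the wrapped-parabolic case, simply asserting that inequality (\ref{nearby}) transfers the combinatorial wrapped parabolic property, whereas you spell out separately the non-annular estimate and the annular estimate (the latter via the isometric action of $D_Y$ on $\CC(Y)$), which is a helpful elaboration.
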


\begin{proof}
Since $\{\nu_n^\pm\}$ bounds projection, there exists a bounded set $\mathcal{B}$ so that any geodesic
joining $\pi_S(\nu_n^+)$ to $\pi_S(\nu_n^-)$ intersects $\mathcal{B}$. 
By property (1), $d_S(c_n^\beta,\nu_n^\beta)$ is uniformly bounded,
so the hyperbolicity of the curve complex implies that any geodesic joining $c_n^+$ to $c_n^-$ lies a bounded Hausdorff
distance from a geodesic joining $\pi_S(\nu_n^+)$ to $\pi_S(\nu_n^-)$, and hence lies a bounded distance from $\mathcal{B}$.
Therefore, any geodesic joining $c_n^+$ to $c_n^-$ intersects some bounded set ${\mathcal{B}}'$, so
$\{c_n^\pm\}$ satisfies condition (a) in the definition of bounding projections.

If $d$ is a combinatorial wrapped parabolic,
then $d$  crosses both $c_n^+$ and $c_n^-$ (by property (3)),
so inequality (\ref{nearby}) implies that  $d$ is a combinatorial wrapped parabolic
for $\{c_n^\pm\}$.

If $d$ is an unwrapped combinatorial parabolic, then there exists 
$\beta=\beta(d)$ so that $d\in b^{-\beta}$, so
$\{m(\nu_n^{\beta(d)},d,\mu)\}$ is eventually bounded and 
$d$ crosses $c_n^{\beta(d)}$ for all $n$ (by property (4).)
Inequality (\ref{nearby}) implies that $\{m(c_n^{\beta(d)},d,\mu)\}$ is eventually bounded, so
$d$ satisfies condition (b)(i).

If $d$ is not a combinatorial parabolic, then there exists $\beta=\beta(d)$ and $N(d)$ such
that $d$ crosses $c_n^\beta$ for all
$n\ge N(d)$ (by property (5)). Then, since
$\{m(\nu_n^{\beta},d,\mu)\}$ is eventually bounded,
inequality (\ref{nearby}) implies that $\{m(c_n^{\beta},d,\mu)\}$ is eventually bounded, so
$d$ satisfies condition (b)(i).
This completes the proof that condition (b) holds for every curve.
\end{proof}

In the quasifuchsian case,
Lemma \ref{bounded length}, inequality (\ref{near}) and  Lemma \ref{predicts}
imply that $\{c_n^\pm\}$ bounds projections, so we have completed
the proof of Proposition \ref{equivalence}
in the quasifuchsian case.

\medskip

We next suppose that there exists a subsequence  $\{\rho_n\}$ such that  for all $n$,
neither $\nu_n^+$ or $\nu_n^-$ is a lamination supported on all of $S$.
We list all the simple closed curves on $S$ by fixing a bijection $\alpha:\mathcal{C}(S)\to{\Bbb{N}}$. 

When choosing the $c_n^+$ on a subsurface $W$ that supports a conformal structure in $\nu^-_n$,
we will use a  procedure similar to the one used in the quasifuchsian case. If $W$ supports a lamination $\lambda$ in $\nu_n^+$,
we choose a pants decomposition that has bounded length and is ``close'' to $\lambda$, where close is taken to mean that
the curves in the pants decomposition lie above any short curve in $\nu_n^-$ and any of the first $n$ curves in
our list that overlap $W$. This will allow us to establish Properties (1)--(5) in Lemma \ref{predicts}. We now make this precise.

Let $c_n^+$ contain every simple
closed curve component of $\nu_n^+$. If $W $ is a subsurface which supports a conformal
structure in $\nu_n^+$, let $c_n^+|_{ W }$ be  a minimal length
pants decomposition of $ W $ which intersects every component of 
$b^-\cup e_n^-$ which overlaps $ W $.
If the subsurface $W $ is the support
of a lamination in $\nu_n^+$, let $c_n^+|_{ W }  $ be a pants decomposition of $ W $ of 
length at most $L_1$  in $N_{\rho_n}$, so that each curve in $c_n^+|_{ W }  $ lies above every
curve in $\alpha^{-1}([0,n])\cup e_n^-$ which overlaps $ W $ (see Lemma \ref{bounded curves above in ends} 
for the existence of such a pants decomposition). 

Similarly, we  define $c_n^-$ 
so that it contains every closed curve component of $\nu_n^-$. 
If $W $ is a subsurface which supports a conformal
structure in $\nu_n^-$, let $c_n^-|_{ W }$ be  a  minimal length
pants decomposition of $W$ which intersects every component of 
$b^+\cup c_n^+$ which overlaps $W$.
If the subsurface $W$ is the support
of a lamination in $\nu_n^-$, let $c_n^-|_{W}$ be a pants decomposition of $W$  of
length at most $L_1$ so that each curve in $c_n^-|_{W}$ lies below every
curve in $\alpha^{-1}([0,n])\cup c_n^+$ which overlaps $W$ (again see Lemma \ref{bounded curves above in ends}).\\

As in the quasifuchsian case, 
$\{\ell_{\rho_n}(c_n^+\cup c_n^-)\}$ is bounded and $\{c_n^\pm\}$ has properties (3), (4) and (5)  of Lemma \ref{predicts}.

Let $Y\subseteq S$ be an essential subsurface.
If $Y$ lies in a subsurface $W$ which supports a conformal structure in $\nu_n^\beta$. Then, as in  the proof of inequality (\ref{near}),
Lemma \ref{intersection bounds distance}  implies that
$$d_Y(\nu_n^{\beta}, c_n^\beta)\le K$$
for large enough $n$ as long as $Y$ is not a component of $\collar(c_n^\beta)$.
If a simple closed curve component $p$ of $ \nu_n^\beta $ intersects $Y$ essentially, 
then $ p\subset c_n^\beta$ and $ p$ is a closed curve without transversal in the base
of the generalized marking $\mu(\nu_n^\beta) $ associated to $\nu_n^\beta$ (see section \ref{end invariants}). Hence we have
$$d_Y(\nu_n^{\beta},c_n^\beta)\le 2.$$
Finally, if $Y$ overlaps a subsurface $W$ which is the base surface of a lamination component of $\nu_n^\beta$, 
and $n\ge \alpha(d)$ for some 
$d\subset \partial Y$ that intersects $W$ essentially, Theorem \ref{top is top} then implies that
$$d_Y(\nu_n^{\beta},c_n^\beta)\le D.$$
Notice that in this last case we need $ \partial Y \neq \emptyset$.
We have proved that $\{c_n^\pm\}$ satisfies property (2). 
Since $\nu_n^\beta$ is never an ending lamination supported on all of $S$, 
$\nu_n^\beta$  contains either a closed curve or a conformal structure,
so Property (1) holds as well. Lemma \ref{predicts} then allows us to complete the proof in the case that
$\nu_n^\beta$ is never an ending lamination supported on all of $S$.

\medskip

To complete the proof, we  consider the case where there exists $\beta_0\in\{\pm\}$ such that for all $n$,
$\nu_n^{\beta_0}$ is a lamination supported on all of $S$.
Notice that in this case,
Property (1) cannot hold, so we will need to again alter the construction somewhat.

If $\nu_n^\beta$ is not a lamination supported on all of $S$, then we choose $c_n^\beta$ exactly as above.
If $\nu_n^\beta$ is a lamination supported on all of $S$, then,
by Minsky's Lipschitz Model Theorem \cite{ELC1}, 
there exists $L_0$ and a tight geodesic $g_n$ joining $\mu(\nu_n^+)$ to $\mu(\nu_n^-)$ 
such that for any vertex $d$ of $g_n$, we have $\ell_{\rho_n}(d)\leq L_0$. 
Since $\{\nu_n^\pm\}$  bounds projections, 
there exists $K>0$ and a vertex $d_n$ of $g_n$, such that $d_S(d_n ,\mu)\leq K$.
Minsky's Lipschitz Model Theorem \cite{ELC1} again implies that there exists
a pants decomposition $c_n^\beta$ of $S$ containing a vertex of $g_n$ between $d_n$ and $\mu(\nu_n^\beta)$ such that
$\ell_{\rho_n}(c_n^\beta)\leq L_1$, and any curve in $c_n^\beta$ lies above 
every curve in $\alpha^{-1}([0,n])\cup e_n^-$ if $\beta =+$ and any curve in $c_n^\beta$ lies below every
curve in $\alpha^{-1}([0,n])\cup c_n^+$ if $\beta =-$. 
 
One then verifies properties (2)--(5)  of Lemma \ref{predicts} just as above. 
Property (1) was only used to prove condition (a), i.e. that every geodesic
in $\CC(S)$ joining $c_n^+$ to $c_n^-$ passes through a fixed bounded set.
However, in the case that $\nu_n^{\beta_0}$ is always a lamination supported on all of $S$,
it follows directly  from our construction and the hyperbolicity of the curve complex (\cite{masur-minsky}) that any geodesic
joining $c_n^+$ to $c_n^-$ passes within a uniformly bounded distance of $\mu$. This completes
the proof of Proposition \ref{equivalence} in our final case.
\end{proof}

\section{Bounded projections implies convergence}
\label{suff}

In this section we prove that if a sequence of Kleinian surface groups admits a pair of sequences of
multicurves of uniformly bounded length which bounds projections,
then it has a convergent subsequence. We first handle the case where the
sequence of multicurves does not have any combinatorial wrapped parabolics, and then
handle the general case by applying an argument motivated by work of
Kerckhoff and Thurston \cite{kerckhoff-thurston}.

\subsection{In the absence of combinatorial wrapped parabolics}

We recall that if a sequence $\{c_n^\pm\}$ of pairs of multicurves bounds projections 
and there are no combinatorial wrapped parabolics, then for any curve $d$ and complete marking $\mu$ there exists $\beta(d)$
such that $\{m(c_n^{\beta(d)},d,\mu)\}$ is eventually bounded.

\begin{proposition}{strong is sufficient}  
Suppose that $\{\rho_n\}$ is a sequence in $AH(S)$ and there exists a sequence $\{c_n^\pm\}$ of
pairs of multicurves such that  \hbox{$\{\ell_{\rho_n}(c_n^+\cup c_n^-)\}$} is bounded and 
$\{  c_n^\pm\}$  bounds projections and has no combinatorial wrapped parabolics.  
Then $\{\rho_n\}$ has a  convergent subsequence.
\end{proposition}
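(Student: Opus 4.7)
The plan is to follow the strategy sketched in the outline of Section \ref{suff}: reduce to pants decompositions, extract a uniformly bounded-length pivot pants decomposition $r$, construct Minsky model manifolds for hierarchies joining $r$ to each $c_n^\beta$, harvest bounded-length transversals of $r$, and apply Thurston's Double Limit Theorem. First, by iteratively adjoining shortest disjoint curves using Lemma \ref{bounded length pants} and passing to a subsequence so the added curves stabilize, I may assume that each $c_n^\pm$ is a pants decomposition of $S$ with $\sup_n \ell_{\rho_n}(c_n^\pm)<\infty$, still bounding projections and having no combinatorial wrapped parabolics.

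Next I would produce the pivot $r$. Condition (a) of bounding projections forces every geodesic in $\CC(S)$ from $\pi_S(c_n^+)$ to $\pi_S(c_n^-)$ to meet a fixed neighborhood of $\mu$; Theorem \ref{bounded curves near geodesic} places the bounded-length curve set $C(\rho_n,L)$ near such a geodesic, so after a subsequence I obtain a fixed curve $a$ in a bounded neighborhood of $\mu$ in $\CC(S)$ satisfying $\sup_n \ell_{\rho_n}(a)<\infty$. Iterating Lemma \ref{bounded length pants} extends $a$ to a pants decomposition $r$ of $S$, which may be taken independent of $n$ after a further subsequence, with $\sup_n \ell_{\rho_n}(r)<\infty$. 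For each sign $\beta$, I would then build a tight hierarchy $H_n^\beta$ from $r$ to $c_n^\beta$ (restricted to $S\setminus\collar(r\cap c_n^\beta)$ if they share curves) and the associated Minsky model manifold $M_n^\beta$. Theorem \ref{bowditch a priori} bounds the $\rho_n$-length of every vertex curve of $H_n^\beta$ uniformly, and the Lipschitz model theorem of \cite{ELC1} then supplies a uniformly Lipschitz map $f_n^\beta:M_n^\beta\to N_{\rho_n}$ in the correct homotopy class. Each component of $r$ admits a transversal of bounded length in $M_n^\beta$, constructed from the adjacent blocks of the model; pushing forward by $f_n^\beta$ gives a multicurve $t_n^\beta$ with $\sup_n \ell_{\rho_n}(t_n^\beta)<\infty$.

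The main obstacle is the final stabilization: to arrange $(t_n^+,t_n^-)$ constant after a subsequence, one must control the Dehn-twist coordinate of each transversal about each component of $r$, since \emph{a priori} the transversals could drift by unbounded powers of twists. This is precisely where the no-combinatorial-wrapped-parabolic hypothesis is used: for each component $\gamma$ of $r$ there is a sign $\beta(\gamma)$ for which $\{d_{\collar(\gamma)}(c_n^{\beta(\gamma)},\mu)\}$ is eventually bounded, and the corresponding transversal harvested from $M_n^{\beta(\gamma)}$ inherits a bounded $\collar(\gamma)$-projection, ruling out twist drift. After this stabilization, $r\cup t^+\cup t^-$ is a fixed filling of $S$ with uniformly bounded $\rho_n$-length, and Thurston's Double Limit Theorem \cite{thurston2,otal-double} yields a convergent subsequence of $\{\rho_n\}$.
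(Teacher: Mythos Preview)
Your overall strategy matches the paper's, and the transversal half of your argument is essentially what the paper does. The gap is in your construction of the pivot pants decomposition $r$.

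You write that Theorem~\ref{bounded curves near geodesic} produces, for each $n$, a bounded-length curve lying in a fixed $\CC(S)$-neighborhood of $\mu$, ``so after a subsequence I obtain a fixed curve $a$.'' But a bounded-radius ball in $\CC(S)$ contains infinitely many vertices, so there is no pigeonhole available: a sequence $\{a_n\}$ living in such a ball need not have a constant subsequence. The same problem recurs when you ``iterate Lemma~\ref{bounded length pants}'' to extend $a$ to $r$ and then pass to a subsequence to make $r$ constant; Lemma~\ref{bounded length pants} only hands you an $n$-dependent bounded-length pants decomposition, with no finiteness to extract a constant one.

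The paper's Lemma~\ref{bounded pants} fills exactly this hole with a dichotomy. If $\{a_n\}$ has no constant subsequence, Lemma~\ref{blow up subsurface} produces a proper subsurface $Y$ with $d_Y(a_n,\mu)\to\infty$; since there are no combinatorial wrapped parabolics, some $\beta$ has $\{d_Y(c_n^\beta,\mu)\}$ bounded, so $d_Y(c_n^\beta,a_n)\to\infty$ with both curves in $\mathcal C(\rho_n,L)$, and Theorem~\ref{kgcc fact} forces $\ell_{\rho_n}(\partial Y)\to 0$. Thus in the non-constant case one adds the \emph{fixed} multicurve $\partial Y$ to $r$ instead. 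Iterating this dichotomy in complementary components of $r$ is what actually produces a fixed bounded-length pants decomposition. Once you insert this argument, the rest of your sketch (hierarchies, Bowditch's bounds, the Lipschitz model map, bounding the annulus $A_{d,j}$ via the bound on $m(c_n^{\beta(d)},d,\mu)$, and the finiteness of transversals with bounded $\collar(d)$-projection) lines up with the paper's proof.
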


\noindent
{\bf Remark:} Notice that any bounded sequence in $QF(S)$ will admit  bounded length multicurves
which bound projections (any pair of filling pants decompositions will work).
Therefore, we can only conclude that  there exist a convergent subsequence.

Moreover, unlike in the end invariants case, a sequence of wrapped multicurves which bounds projections 
need not predict all the parabolics in the limit and need not predict which parabolics wrap.
Notice that if $\{\rho_n\}$ converges and $c^+$ and $c^-$ is any pair of filling multicurves, then the
constant sequence $\{c_n^\pm=c^\pm\}$ will be a sequence of pairs of bounded length multicurves
bounding projections. In this case, $\{c_n^\pm\}$ does not predict any parabolics or ending laminations.

\medskip

\begin{proof}
We first show that, after passing to a subsequence $\{\rho_j\}$, there exists a
fixed pants decomposition which has bounded length in all $N_{\rho_j}$.

\begin{lemma}{bounded pants}
Suppose that $\{\rho_n\}$ is a sequence in $AH(S)$ and consider
a sequence $\{c_n^\pm\}$ of pairs of multicurves which bound projections without combinatorial wrapped parabolics.
If $\{\ell_{\rho_n}(c_n^+\cup c_n^-)\}$ is bounded, then there exists a subsequence 
$\{\rho_j\}$  and a pants decomposition
$r$ of $S$, so that $\{\ell_{\rho_j}(r)\}$ is a bounded sequence.
\end{lemma}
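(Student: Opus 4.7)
I plan to build $r$ one multicurve at a time, by combining Bowditch's a priori bounds (Theorem \ref{bowditch a priori}), the Hausdorff blow-up dichotomy (Lemma \ref{blow up subsurface}), and Minsky's short-curve criterion (Theorem \ref{kgcc fact}); the absence of combinatorial wrapped parabolics will be used precisely to supply the second bounded-length curve needed as input to \ref{kgcc fact}.

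First I produce an initial short (multi)curve. Fix a tight geodesic $g_n\subset\CC(S)$ from $c_n^+$ to $c_n^-$. By Theorem \ref{bowditch a priori}, every vertex $v$ of $g_n$ satisfies $\ell_{\rho_n}(v)\le R$ for a uniform $R$, and by condition (a) of the bounding-projections hypothesis every vertex of $g_n$ lies within $K$ of $\mu$ in $\CC(S)$. Choose such a vertex $v_n$ and pass to a subsequence; Lemma \ref{blow up subsurface} applied to $\{v_n\}$ produces two alternatives. Either $v_n$ stabilizes to a fixed curve $v$ (which goes into our accumulating multicurve), or there is an essential subsurface $Y\subsetneq S$, proper because $d_S(v_n,\mu)\le K$, with $d_Y(\mu,v_n)\to\infty$. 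In the second case, for any component $e$ of $\partial Y$ the no-combinatorial-wrapped-parabolic hypothesis furnishes $\beta\in\{+,-\}$ with $\{m(c_n^\beta,e,\mu)\}$ eventually bounded; in particular $c_n^\beta$ crosses $e$ and $d_Y(c_n^\beta,\mu)$ is bounded, so the triangle inequality yields $d_Y(v_n,c_n^\beta)\to\infty$. Since both $v_n$ and $c_n^\beta$ lie in $\CC(\rho_n,L')$ for a uniform $L'$, Theorem \ref{kgcc fact} forces $\ell_{\rho_n}(\partial Y)\to 0$, and every component of $\partial Y$ joins our accumulating multicurve $R_1$.

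The iteration then proceeds as follows: given a bounded-length multicurve $R_k\subsetneq S$, extend it to a bounded-length pants decomposition $p_n^{(k)}\supset R_k$ of $S$ via Lemma \ref{bounded length pants}, pass to a subsequence on which some component $d_n\in p_n^{(k)}\setminus R_k$ varies, and apply Lemma \ref{blow up subsurface} to $\{d_n\}$ \emph{inside} the component of $S\setminus R_k$ containing $d_n$. The same kgcc / no-wrapped-parabolic argument either fixes $d_n$ as a new curve for $r$ or produces a new short multicurve $\partial Y$ whose components strictly enlarge $R_k$. Since $|R_k|\le 3g-3+b$, the process terminates with $r=R_k$ a pants decomposition of bounded length along a subsequence $\{\rho_j\}$. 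The main obstacle is the combinatorial bookkeeping required to ensure strict progress at each iteration: one must apply \ref{blow up subsurface} inside the appropriate component of $S\setminus R_k$ (rather than all of $S$) so that the resulting subsurface $Y$ is proper there and $\partial Y$ contributes at least one curve outside $R_k$, and one must simultaneously check that the restriction of the hypothesis ``bounds projections without combinatorial wrapped parabolics'' still supplies the needed bounded side $\beta(e)$ for each new boundary component encountered.
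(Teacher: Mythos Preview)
Your first step is fine (modulo the slip that condition (a) only guarantees \emph{some} vertex of $g_n$ lies near $\mu$, not every vertex), and using Bowditch's a priori bounds in place of the paper's Theorem~\ref{bounded curves near geodesic} is a legitimate alternative there.

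The iteration step, however, has a real gap, and it is not merely bookkeeping. When you extend $R_k$ to a bounded-length pants decomposition $p_n^{(k)}$ and pick a varying curve $d_n\in p_n^{(k)}\cap W$, you have no control over $d_W(d_n,\mu)$. Lemma~\ref{blow up subsurface} applied in $W$ may therefore return $Y=W$ itself: then $\partial Y\subset R_k$, the kgcc argument only reconfirms that $\partial W$ is short, and you add nothing new. This is not a pathology one can exclude by passing to subsequences---if the eventual limit has a geometrically infinite end supported on $W$, then $\CC(W,\rho_n,L)$ is unbounded in $\CC(W)$ and a generic bounded-length pants extension will pick up exactly such escaping curves.

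What is missing is an \emph{anchoring} step: in each component $W$ of $S\setminus R_k$ you must produce a bounded-length curve $a_n\in\CC(W)$ with $d_W(a_n,\mu)$ uniformly bounded, so that any blow-up subsurface is forced to be proper in $W$. The paper obtains this anchor from Theorem~\ref{bounded curves near geodesic}: since $c_n^\beta\in\CC(\rho_n,L)$ and $d_W(c_n^\beta,\mu)$ is bounded (by the no-wrapped-parabolic hypothesis applied to a component of $\partial W$), that theorem yields either a small-diameter projection or a curve $a_n\in\CC(W,\rho_n,L)$ with $d_W(a_n,c_n^\beta)\le D$. Your Bowditch-based approach does not supply this, because to run Theorem~\ref{bowditch a priori} in $\CC(W)$ you would need bounded-length endpoints \emph{in} $\CC(W)$, and $\pi_W(c_n^\pm)$ has no a priori length bound.
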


\begin{proof}{}
By assumption, there is a bounded region ${\mathcal{B}}$ in ${\mathcal{C}}(S)$ such
that any geodesic joining $c_n^+$ to $c_n^-$ intersects $\mathcal{B}$. For all $n$, let $b_n$ be
a curve on the geodesic joining $c_n^+$ to $c_n^-$ which is contained in $\mathcal{B}$.
By Theorem \ref{bounded curves near geodesic}, there exists $D$ and $L$ such that, for all $n$, there
exists a curve $a_n\in{\mathcal{C}}(S)$ such that $d(a_n,b_n)\le D$ and
$\ell_{\rho_n}(a_n)\le L$.

If $\{a_n\}$ admits a constant subsequence, then we pass to the
appropriate subsequence of $\{\rho_n\}$ and the constant curve is
the first curve in our pants decomposition $r$.

If not, by Lemma \ref{blow up subsurface}, there is a subsurface $Y$ such that $d_Y(a_n,\mu)$ diverges. Since $a_n$ is contained in a bounded region of $\CC(S)$, $Y$ is a proper subsurface of $S$. By assumption, there exists
$\beta\in\{\pm\}$, so that $d_Y(c_n^\beta,\mu)$ is bounded, hence \hbox{$d_Y(c_n^\beta,a_n)\to\infty$}.
Then, by Theorem \ref{kgcc fact}, $\ell_{\rho_n}(\partial Y)\to 0$. In this case, the
components of $\partial Y$ are the first curves in $r$.

We now assume that $r$ is non-empty and not yet a pants
decomposition. We apply a mild variation of the above argument to
show that we can enlarge $r$. This will eventually complete the
proof.  Let $W$ be a component of $S-r$ which is not a
thrice-punctured sphere. Since
$r$ has uniformly bounded length, one may use Lemma \ref{bounded length pants}
to find, for all $n$, a curve $b_n\in\mathcal{C}(W)$ so that $\ell_{\rho_n}(b_n)$ is
uniformly bounded. By assumption, there exists $\beta\in  \{\pm\}$
so that $d_W(c_n^\beta,\mu)$ is eventually bounded. Let $L\ge L_0$ be an upper bound for both $\{\ell_{\rho_n}(c_n^\beta)\}$ and $\{\ell_{\rho_n}(b_n)\}$
(where $L_0=L_0(S)$ is the constant from Theorem \ref{bounded curves near geodesic}).
Theorem \ref{bounded curves near geodesic}  implies that there exists \hbox{$D=D(S,L)$} such that either
${\rm diam}(\pi_W({\mathcal{C}}(\rho_n,L))\le D$ or $d_W(c_n^\beta,{\mathcal{C}}(W,L,\rho_n))\le D$
for all $n$ (since $c_n^\beta\in{\mathcal{C}}(\rho,L)$).
In the first case, $d_W(b_n,c_n^\beta)\le D$, while in the second case there exists 
$a_n\in {\mathcal{C}}(W,L,\rho_n)$ such that $d_W(c_n^\beta,a_n)\le D$.
In the first case, we let $a_n=b_n$. Therefore, in either case,  we have constructed
a sequence $\{a_n\}$ in ${\mathcal{C}}(W)$ such that $\ell_{\rho_n}(a_n)\le L$ 
and $d_W(c_n^\beta,a_n)\le D$.

If $\{a_n\}$ admits a constant subsequence, then we pass to the
appropriate subsequence of $\{\rho_n\}$ and add the constant curve to $r$.
If not, by Lemma \ref{blow up subsurface} there is a subsurface $Y$ such that $d_Y(a_n,\mu)$ diverges. Since $\{d_W(a_n,\mu)\}$ is eventually bounded, $Y$ is a proper subsurface
of $W$.  We can again argue, as in the third paragraph of the proof, 
that \hbox{$d_Y(c_n^{\beta'},a_n)\to\infty$} for some $\beta'\in\{\pm\}$. By Theorem \ref{kgcc fact},
\hbox{$\ell_{\rho_n}(\partial Y)\to 0$}. In this case, we may add
$\partial Y-\partial W$ to $r$.
\end{proof}

Next we construct, for every curve in $r$ a transversal
which has bounded length in all $N_{\rho_j}$, perhaps after passage to a further subsequence.
By Lemma \ref{bounded length pants}, there are
bounded length pants decompositions $r_j^+$ and $r_j^-$ in $N_{\rho_j}$
containing $c_j^+$ and $c_j^-$, respectively.  We may pass to a
subsequence so that $r\cap r_j^+$ and $r\cap r_j^-$ are both constant.
(Here, we use $r\cap r_j^\beta$ as shorthand for the collection of curves which lie
in both $r$ and $r_j^\beta$.)

Let $d$ be a curve in $r$. There exists a choice of sign $\beta=\beta(d)\in\{\pm\}$
so that $m(c_j^\beta,d,\mu)$ is bounded for all $j$, perhaps after
again passing to a subsequence.  In particular, this implies that $d$ does not lie in $r_j^\beta$
(since $d$ must intersect $c_j^\beta$ if $m(c_j^\beta,d,\mu)$ is finite).
Let $G=G(d)$ be the subsurface of $S-(r\cap r_j^\beta)$ which contains $d$.

Let $H_j=H_j(d)$ be a hierarchy in $\mathcal{C}(G)$ joining $r_j^\beta\cap G$ and
$r\cap G$. Here we regard both $r_j^\beta\cap G$ and $r\cap G$ as markings without transversals.
(Hierarchies are defined and discussed extensively  in Masur-Minsky \cite{masur-minsky2}.)

Let $\sigma_j\in AH(G)$ be the unique Kleinian group so that $r_j^\beta\cap G$ is the collection of
upward-pointing parabolic and $r\cap G$ is the collection of downward-pointing parabolics.
Let $X_j=N_{\sigma_j}={\bf H}^3/\sigma_j(\pi_1(G))$.
(The hyperbolic manifold $X_j$ is called a maximal cusp, see Keen-Maskit-Series \cite{KMS} for a proof of
the existence and uniqueness of $X_j$. The existence also follows from Thurston's Geometrization Theorem
for pared manifolds, see Morgan \cite{morgan}.)
Notice that $r_j^\beta\cap G$ and $r\cap G$ are the end invariants of $X_j$.

Let $M_j$ be the model manifold associated to the hierarchy  $H_j$. (The construction of a model
manifold associated to a hierarchy is carried out in Minsky \cite[Sec. 8]{ELC1}.)
The Bilipschitz Model Manifold Theorem \cite{ELC2} guarantees that there exists a bilipschitz homeomorphism
\hbox{$g_j:M_j\to X_j$.}

The hierarchy $H_j$ is a family of tight geodesics. The base tight geodesic lies in ${\mathcal{C}}(G)$ and joins
$r_j^\beta\cap G$ to $r\cap G$. Theorem \ref{bowditch a priori} implies that there is a uniform upper bound
on the length $\ell_{\rho_j}(c)$ of any curve $c$ which is contained in a vertex of the base tight geodesic. 
Then $H_j$ is constructed iteratively by appending tight geodesics in curve complexes of subsurfaces of $G$ which
join vertices in previously added tight geodesics.
Since this process terminates after a finite (bounded) number of steps, Theorem \ref{bowditch a priori} implies
that there is a uniform upper bound on the length $\ell_{\rho_j}(c)$ of any curve $c$ contained in a vertex
in the hierarchy $H_j$.

The model manifold $M_j$
is constructed from {\em blocks}  of two isometry types, one homeomorphic to the product
of a one-holed torus and the interval and the other homeomorphic to the product of a four-holed sphere
and the interval, {\em tubes}, which are isometric to Margulis regions in hyperbolic 3-manifolds, and
a finite number of {\em boundary blocks}. Each block is associated to an edge of a geodesic in the curve complex of either a one-holed torus or a four-holed sphere. These geodesics are called 4-geodesics.

Let $\hat M_j$ be obtained from $M_j$ by removing the tubes and the boundary blocks. So, $\hat M_j$ consists
entirely of blocks. Since all the vertices have uniformly bounded length, the techniques of
section 10 of  Minsky \cite{ELC1}  (in particular,
see Steps 0--5)  imply that
there exists a $K$-Lipschitz map $h_j:\hat M_j\to N_{\rho_j}$ where $K$ depends only on $S$ and the uniform bound
on the lengths of the curves in $H_j$ obtained from Theorem \ref{bowditch a priori}.

Let $A_{d,j}$ be the intersection of $\hat M_j$ with $U(d)$, the tube in $M_j$ associated to $d$. The annulus
$A_{d,j}$ is made up of  $s_j(d)+1$ bounded geometry annuli where $s_j(d)$ is the number of edges of 4-geodesics
in $H_j$ whose domains contain $d$ in their boundary. The arguments in Theorem 9.11 of Minsky \cite{ELC1}
imply that 
$$s_j(d)\le C\left(\sup_{d\in\partial Y, Y\ne\collar(d)} d_Y(r,r_j^\beta)\right)^a$$
for uniform constants $C$ and $a$.
However, 
$$\sup_{d\in\partial Y, Y\ne\collar(d)} d_Y(r,r_j^\beta)\le m(c_j^\beta,d,\mu)+
\sup_{d\in\partial Y, Y\ne\collar(d)} d_Y(r,\mu).$$
The first term on the right hand side is uniformly bounded by assumption, while the second
term is finite and independent of $j$. Therefore, $s_j(d)$ is bounded, which implies that the geometry of $A_{d,j}$ is
uniformly bounded.

It follows that there is an essential curve $t_{d,j}$  of uniformly bounded length 
in  $\partial \hat M_j$  which is disjoint from the
boundaries of the annuli associated to components of $r\cap G-d$ and intersects $U(d)$ minimally,
i.e. in two arcs if $U(d)$  separates the component of  $G-(r\cap G)$ it is contained in and in one arc otherwise.
The image $g_j(t_{d,j})$ in $X_j$ is a curve, of uniformly bounded length, which lies above the cusp
associated to $d$. Theorem \ref{top is top} then implies that
$d_Y(t_{j,d},r_j^\beta)$ is uniformly bounded when $d\subset\partial Y$. Since,
$m(c_j^\beta,\mu)$ is uniformly bounded and
$$|d_Y(c_j^\beta,\mu)-d_Y(r_j^\beta,\mu)|\le 1,$$
we see that $d_Y(t_{j,d},\mu)$ is uniformly bounded for any subsurface $Y\subset S$ whose boundary contains $d$.
Since any two curves which are disjoint from $r\cap G-d$ and intersect $d$ minimally
differ, up to homotopy, by a power of a Dehn twist in $U(d)$, there are only finitely many possibilities for $t_{j,d}$.
Therefore, we
may pass to a subsequence so that $t_{j,d} = t_d$ for a fixed curve
$t_d$.  The length $\ell_{\rho_j}(t_d)$ is uniformly bounded, since
$h_j(t_d)$ is a bounded length representative of $t_d$ in $N_{\rho_j}$.

We have found a pants decomposition $r$ and a system of transversals
$\{t_d\}_{d\in r}$ such that all curves in $r$ and their transversals
have uniformly bounded length in $\{N_{\rho_j}\}$. It then follows from Thurston's Double Limit Theorem
\cite{thurston2,otal-double}
that $\{\rho_j\}$ has a convergent subsequence.
\end{proof}

\noindent
{\bf Remark:} With a little more care, one may use this same argument to find a surface in $N_{\rho_j}$, for all large
enough $j$, where $r$ and $\{t_d\}_{d\in r}$ have uniformly bounded length. One can then verify convergence up to
subsequence more directly.

\subsection{The general case}
We now use ideas based on work of Kerckhoff and Thurston \cite{kerckhoff-thurston} to handle the general
case.

\begin{proposition}{sufficient}  
Suppose that $\{\rho_n\}$ is a sequence in
$AH(S)$ and there exists a sequence of pairs, $\{c_n^\pm\}$, of multicurves such that
$\{\ell_{\rho_n}(c_n^+\cup c_n^-)\}$ is bounded and $\{ c_n^\pm\}$ bounds projections.
Then $\{\rho_n\}$ has a convergent subsequence.
\end{proposition}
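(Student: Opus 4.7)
The strategy, in the spirit of Kerckhoff--Thurston \cite{kerckhoff-thurston}, is to pre-compose $\{\rho_n\}$ with Dehn twists about the combinatorial wrapped parabolics to reduce to Proposition~\ref{strong is sufficient}, and then to recover a convergent subsequence of $\{\rho_n\}$ by passing through a common geometric limit. Let $q=\{q_1,\ldots,q_r\}$ be the combinatorial wrapped parabolics of $\{c_n^\pm\}$, with combinatorial wrapping numbers $w(q_i)\in\Z$ and integer sequences $\{s_{i,n}\}$ with $|s_{i,n}|\to\infty$ furnished by condition~(b)(ii). After passing to a subsequence, assume the signs of the $s_{i,n}$ are constant. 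For an $r$-tuple $k=(k_1,\ldots,k_r)$, write $D_q^{k}:=\prod_i D_{q_i}^{k_i}$ and $w(q)\cdot s_n:=(w(q_i)s_{i,n})_i$. Define the twisted sequences
\begin{equation*}
\rho_n^+:=\rho_n\circ(D_q^{-w(q)\cdot s_n})_*,\qquad \rho_n^-:=\rho_n\circ(D_q^{-(w(q)-\mathbf 1)\cdot s_n})_*
\end{equation*}
together with the transported multicurve pairs $\alpha_n^{+,\beta}:=D_q^{w(q)\cdot s_n}(c_n^\beta)$ and $\alpha_n^{-,\beta}:=D_q^{(w(q)-\mathbf 1)\cdot s_n}(c_n^\beta)$.

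By the equivariance $\ell_{\rho_n^{\pm}}(\alpha_n^{\pm,\beta})=\ell_{\rho_n}(c_n^\beta)$, these multicurves have uniformly bounded length. The core verification is that $(\alpha_n^{+,+},\alpha_n^{+,-})$ bounds projections for $\rho_n^+$ with no combinatorial wrapped parabolic: for subsurfaces $Y$ with $q_i\subset\partial Y$ but $Y\neq\collar(q_i)$, Dehn twists about the $q_j$ act on $\mathcal{C}(Y)$ as isometries with bounded displacement at the fixed marking $\mu$, so the $m^{na}$-bounds pass through from $\{c_n^\pm\}$; at $Y=\collar(q_i)$, condition~(b)(ii) gives $\{d_Y(\alpha_n^{+,+},\mu)\}$ bounded while $d_Y(\alpha_n^{+,-},\mu)=d_Y(D_Y^{s_{i,n}}\cdot D_Y^{(w(q_i)-1)s_{i,n}}(c_n^-),\mu)\sim |s_{i,n}|\to\infty$, since $D_Y$ shifts the annular projection by $\pm 1$ per application. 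Thus $q_i$ becomes merely an unwrapped downward-pointing combinatorial parabolic. The symmetric computation shows that $(\alpha_n^{-,+},\alpha_n^{-,-})$ bounds projections for $\rho_n^-$ without wrapped parabolics, with $q_i$ now unwrapped upward-pointing. Applying Proposition~\ref{strong is sufficient} to each twisted sequence yields convergent subsequences $\rho_n^\pm\to\rho^\pm$ in $AH(S)$; Theorem~\ref{kgcc fact} applied to the diverging annular projections of $\{\alpha_n^{\pm,\beta}\}$ forces $\ell_{\rho_n^\pm}(q_i)\to 0$, so $q_i$ is parabolic in both $\rho^\pm$.

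The main obstacle is deducing convergence of $\{\rho_n\}$ itself from that of $\{\rho_n^\pm\}$: the formal identity $\rho_n=\rho_n^+\circ(D_q^{w(q)\cdot s_n})_*$ does not pass to a limit, because composing with unbounded Dehn twists about curves whose images tend to parabolics generically diverges in $\text{PSL}_2(\C)$. I will overcome this by working through the common geometric limit. Since $N_{\rho_n^+}$, $N_{\rho_n^-}$ and $N_{\rho_n}$ are isometric as unmarked hyperbolic 3-manifolds, a further diagonal extraction produces a common geometric limit $\hat\Gamma$; set $\hat N:=\Ht/\hat\Gamma$. Applying Proposition~\ref{limit set machine} to $\{\rho_n^+\}$ (whose wrapping multicurve at the $q_i$ is empty by construction, after a final subsequence extracted via Theorem~\ref{necessity}) produces a level surface $F^+\subset N_{\rho^+}^0$ embedding under the covering map to $\hat N$, and the candidate algebraic limit of $\{\rho_n\}$ is the representation $\rho:=(\iota\circ f_{w(q)})_*\in AH(S)$, where $f_{w(q)}:S\to\hat J\subset\hat N$ is the standard wrapping map of Section~\ref{define wrapping numbers} associated to the tuple $w(q)$ and $\iota$ is the inclusion. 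Algebraic convergence $\rho_n(g)\to\rho(g)$ for each $g\in\pi_1(S)$ then follows by tracking the bilipschitz comparison maps of Lemma~\ref{comparisons respect tubes} applied to $\{\rho_n^+\}$, together with Lemma~\ref{dehn twist}, which identifies $D_q^{w(q)\cdot s_n}$ as precisely the reparametrization moving between the wrapping pictures of $\rho^+$ and $\rho$.
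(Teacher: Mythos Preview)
Your reduction step---defining $\rho_n^\pm$ by pre-composing with the Dehn twist powers $D_q^{-w(q)\cdot s_n}$ and $D_q^{-(w(q)-\mathbf 1)\cdot s_n}$, verifying that the transported multicurve pairs bound projections without combinatorial wrapped parabolics, and applying Proposition~\ref{strong is sufficient}---is correct and is exactly what the paper does (Lemma~\ref{unwrapped}). The divergence, and the gap, is in how you recover a convergent subsequence of $\{\rho_n\}$ from the convergence of $\{\rho_n^\pm\}$.

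Your appeal to Theorem~\ref{necessity} to conclude that the geometric wrapping multicurve of $(\{\rho_n^+\},\rho^+,\hat\Gamma)$ omits $q_i$ is unjustified: Theorem~\ref{necessity} identifies the geometric wrapping multicurve with the combinatorial wrapped parabolics of the \emph{end invariants} of $\rho_n^+$, and you have no control over those---you have only arranged that $q_i$ is an unwrapped combinatorial parabolic for the \emph{multicurves} $\alpha_n^{+,\pm}$. As the Remark following Proposition~\ref{strong is sufficient} emphasizes, bounded-length multicurves bounding projections need not predict which parabolics wrap geometrically. Two further gaps follow: you have not shown that the cusp of $\hat N$ at $q_i$ has rank~$2$ (without which the torus $T_i$ and the wrapping map $f_{w(q)}$ of Section~\ref{define wrapping numbers} do not exist), and you have not addressed possible additional wrapping curves for $\{\rho_n^+\}$ that would prevent $F^+$ from embedding in $\hat N$. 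Finally, even granting all this, you must still match the combinatorial integers $s_{i,n}$ from condition~(b)(ii) to the Dehn-filling integers produced by Lemma~\ref{dehn twist}; these come from different sources and their agreement is not automatic.

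The paper sidesteps the geometric limit entirely. It extends $q$ to a pants decomposition $p$ with a full set of transversals $\hat p$; curves in $p$ and transversals to $p\setminus q$ are fixed by $\Phi_n^+$, so their $\rho_n$-lengths coincide with their $\rho_n^+$-lengths and are bounded. For a transversal $t$ to $d\in q$, the relation $\rho_n^-=\rho_n^+\circ(D_q^{s_n})_*$ lets one compare $\rho_n^+(t)$ and $\rho_n^-(t)$ directly: when $i(t,d)=1$ one reads off that $\rho_n^+(d^{-s_n})=\rho_n^-(t)\rho_n^+(t)^{-1}$ converges, hence so does $\rho_n(t)=\rho_n(d^{-w(d)s_n})\rho_n^+(t)$; the case $i(t,d)=2$ is handled by a short contradiction argument. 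With $p\cup\hat p$ a binding system of bounded $\rho_n$-length, the Double Limit Theorem finishes. Note that the convergence of $\{\rho_n(d^{s_n})\}$ obtained here is exactly the ingredient that would produce the rank-$2$ cusp you need in $\hat N$, so the paper's algebraic lemma is also the missing piece in your approach.
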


\begin{proof}
Let $q$ be the set of combinatorial wrapped parabolics for $\{c_n^\pm\}$. We recall that $d\in q$ if and only if
$\{m^{na}(c_n^+,d,\mu)\}$ and  $\{m^{na}(c_n^-,d,\mu)\}$  are both
eventually bounded  and there exists $w=w(d)\in \mathbf{Z}$ and a sequence 
\hbox{$\{s_n=s_n(d)\}\subset\Z$} such that  $\lim |s_n|=\infty$ and both
\hbox{$\{d_Y((D_Y^{s_nw}(c_n^+),\mu)\}$} and \hbox{$\{d_Y(D_Y^{s_n(w-1)}(c_n^-),\mu)\}$} are
eventually bounded when $Y=\collar(d)$.

Notice that if $q$ is empty, then  Proposition \ref{sufficient} follows from
Proposition \ref{strong is sufficient}. We first observe that $q$ is a multicurve.

\begin{lemma}{}
The set $q$ of combinatorial wrapping parabolics is a multicurve.
\end{lemma}

\begin{proof}
Suppose that $q$ contains intersecting curves $c$ and $d$, and let $Y=\collar(c)$ and $Z=\collar(d)$.
Lemma \ref{inequality on triples} then implies that
$$\min\{d_Y(\partial Z,c_n^+),d_Z(\partial Y,c_n^+)\}\le 10$$
which contradicts the fact that both
$d_Y(c_n^+,\mu)\to\infty$ and \hbox{$d_Z(c_n^+,\mu)\to\infty$}.
\end{proof}

Let $Q=\bigcup_{q_i\in q} Q_i=\collar(q_i)$ be a regular neighborhood of $q$ and consider the diffeomorphisms 
$$\Phi^+_n=\Pi_{q_i\in q} D_{Q_i}^{s_n(q_i) w(q_i)}\ \ {\rm and}\ \  \Phi^-_n=\Pi_{q_i\in q } D_{Q_i}^{s_n(q_i) (w(q_i)-1)}$$
where $D_{Q_i}$ is the right Dehn twist about the annulus $Q_i$.

\begin{lemma}{unwrapped}
The pairs of sequences $\{\Phi^+_n(c_n^\pm)\}$ and $\{\Phi^-_n(c_n^\pm)\}$
both bound projections and have no combinatorial wrapped parabolics.
\end{lemma}

\begin{proof}
We first prove that $\{\Phi^+_n(c_n^\pm)\}$  bounds projections. 
 
Let $d$ be a curve in $q$.  Since $\{c_n^\pm\}$ bounds projections, $d$ lies a uniformly bounded distance
from any geodesic joining $c_n^+$ to $c_n^-$. Notice that if $c\in\mathcal{C}(S)$, then
$d_S(d,\Phi^+_n(c))=d_S(d,c)$. Since any geodesic joining $\Phi_n^+(c_n^+)$ to $\Phi_n^+(c_n^-)$ is the image
under $\Phi_n^+$ of a geodesic joining $c_n^+$ to $c_n^-$, it follows that $d$ also lies a uniformly bounded distance
from any geodesic joining $\Phi_n^+(c_n^+)$ to $\Phi_n^+(c_n^-)$. Hence the pair
of sequence $\{\Phi_n^+(c_n^+)\}$ and $\{\Phi_n^+(c_n^-)\}$ 
satisfies condition (a) in the definition of bounding projections.

Let $d\subset S$ be a simple closed curve which is not a component of $q$. If $d$ does not cross $q$ then 
$m ( c_n^\pm, d, \mu ) = m ( \Phi_n^+( c_n^\pm), d, \mu ) $ for all $n$. Since $\{c_n^\pm\}$ bounds projections
and $d$ is not a combinatorial wrapping parabolic, it follows that there exists 
$\beta\in\{\pm\}$ such that $\{m ( \Phi_n^+( c_n^\beta), d, \mu)\}$ is eventually bounded.

If $d$ crosses a component $ q_i $ of $q$, it follows from the definition of 
$\Phi_n^\pm$ that $d_{Q_i}(d,\Phi_n^+(c_n^-))\longrightarrow \infty$ where $Q_i$
is the collar neighborhood of $q_i$.
Lemma \ref{inequality on triples}  then implies that if $n$ is large enough, then
$ d_Y(q_i, \Phi_n^+(c_n^-) )\leq 4 $ for any 
subsurface $Y$ whose boundary contains $d$. 
Thus, again if $n$ is large enough, by Lemma \ref{intersection bounds distance},
$$d_Y(\mu,\Phi_n^+(c_n^-))\le d_Y(\mu,q_i)+  d_Y(q_i, \Phi_n^+(c_n^-) )\le 1+2i(q_i,\mu)+4=5+2i(q_i,\mu)$$
for any 
subsurface $Y$ whose boundary contains $d$. 
Therefore, $\{m ( \Phi_n^+( c_n^-), d, \mu)\}$ is eventually bounded.

If $d=q_i$ is a component of $Q$, then $m^{na}(c_n^+, q_i, \mu ) = m^{na}( \Phi_n^+( c_n^+), q_i, \mu ) $ for all $n$,
so  $\{m^{na} ( \Phi_n^+( c_n^+), q_i, \mu)\}$ is eventually bounded. By definition of $\Phi_n^+$,
$\{d_{Q_i}(\Phi_n^+(c_n^+),\mu)$ is eventually bounded. Therefore,
$\{m( \Phi_n^+( c_n^+), q_i, \mu)\}$ is eventually bounded

We have proved that for any simple closed curve $d \subset S $
there is $ \beta $ such that $m ( \Phi_n^+( c_n^\beta), d, \mu ) $ is eventually bounded. 
This completes the proof that the pair
$\{\Phi_n^+(c_n^\pm)\}$ bounds projections without combinatorial wrapped parabolics.

The proof that the sequences of pairs $\{\Phi_n^-(c_n^\pm)\}$ bounds projections without combinatorial wrapped parabolics is analogous.
\end{proof}

For each $n$, consider the representations
$$\rho_n^+=\rho_n\circ (\Phi_n^+)_*^{-1} \ \ \ {\rm and}\ \ \ \rho_n^-=\rho_n\circ (\Phi_n^-)_*^{-1}.$$
By construction, the sequences $\{\ell_{\rho_n^\beta}(\Phi_n^\beta(c_n^\pm))\}=\{\ell_{\rho_n}(c_n^\pm)\}$ are
uniformly bounded for any $\beta\in\{\pm\}$. Lemma \ref{unwrapped} implies that $\{\Phi^+_n(c_n^\pm)\}$ and 
$\{\Phi^-_n(c_n^\pm)\}$ both bound projections  and have no combinatorial wrapped parabolics,
so Proposition \ref{strong is sufficient} implies that we may pass to a subsequence so that
both $\{\rho_n^+\}$ and $\{\rho_n^-\}$ converge to discrete, faithful representations $\rho^+$ and
$\rho^-$.

Extend $q$ to a pants decomposition $p$ of $S$. If $d\in p$, then
$\ell_{\rho_n}(d)=\ell_{\rho_n^+}(d)$ for all $n$, so $\{\ell_{\rho_n}(d)\}$ is bounded.
Let  $\hat p$ be a maximal collection of transversals to the elements of $p$
(i.e. each element of $\hat p$ intersects exactly one element of $p$ and does so minimally).
If $t\in\hat p$ is a transversal to an element of $p-q$, then again
$\ell_{\rho_n}(t)=\ell_{\rho_n^+}(t)$ for all $n$, so $\{\ell_{\rho_n}(t)\}$ is bounded

\begin{lemma}{}
If $t\in\hat p$ is a transversal to an element $d$ of $q$, then
$\{\ell_{\rho_n}(t)\}$ is bounded.
\end{lemma}

\begin{proof}{}
We show that any subsequence  of $\{ \rho_n\}$ contains a further subsequence such that $\{\rho_n(t)\}$ converges. 
Our result then follows immediately.

We first pass to a subsequence, and fix a specific representative in each conjugacy class, so that  
$\{\rho_n^+=\rho_n\circ (\Phi_n^+)_*^{-1}\}$
converges as a sequence of representations into ${\rm PSL}(2,\mathbb C)$. (The
existence of such a subsequence follows from Lemma \ref{unwrapped} and Proposition \ref{strong is sufficient}.)
Since $\Phi_n^+$ and $\Phi_n^-$ restrict to the identity on $S-Q$, and $\{\rho_n^-\}$ has a convergent
subsequence in $AH(S)$ (again by Lemma \ref{unwrapped} and Proposition \ref{strong is sufficient}), we may
pass to a further subsequence so that  $\{\rho_n^-\}$ also converges as a sequence of representations 
into ${\rm PSL}(2,\mathbb C)$.

Let us first consider the case where $t$ intersects $d$ exactly once. 
Then, with an appropriate choice of basepoint for $\pi_1(S)$, we have
$$\rho_n^-(t)=\rho_n(d^{(w(d)-1)s_n}t)=\rho_n^+(d^{-s_n}t),$$ 
so $\rho_n^+(d^{-s_n})=\rho_n^-(t)\rho_n^+(t)^{-1}$. Since $\{\rho_n^-(t)\}$ and $\{\rho_n^+(t)\}$ both converge 
we immediately conclude that \hbox{$\{\rho_n^+(d^{s_n})=\rho_n(d^{s_n})\}$} and 
\hbox{$\{\rho_n(t)=\rho_n(d^{-w(d)s_n})\rho_n^+(t)\}$} converge.

In the slightly more complicated second case where $t$ intersects $d$ twice, we argue by contradiction. 
We first homotope $t$ so  that the two points of $t\cap d$ coincide. Then $t$ is the concatenation of two loops $a$ and $b$ which are 
freely homotopic to curves that are disjoint from $d$ and $\rho_ n(t)=\rho_n(a b)$. With an appropriate choice of basepoint for $\pi_1(S)$, 
we have
$$\rho_n(a)=\rho_n^+(a)=\rho_n^-(a)\textrm{, }\quad \rho_n(d)=\rho_n^+(d)=\rho_n^-(d),$$
and
$$\quad\rho_n^-(b)=\rho_n(d^{(w(d)-1)s_n}bd^{-(w(d)-1)s_n})=\rho_n^+(d^{-s_n}bd^{s_n}).$$

Suppose that  $\{\rho_n(d^{s_n})=\rho_n^+(d^{s_n}\}$ exits every compact subset of ${\rm PSL}(2,\mathbb C)$ and pick \hbox{$p\in\mathbb H^3$}. 
Since the fixed points of $\rho_n^+(d)$ and $\rho_n^+(b)$ converge to
distinct sets (i.e. the fixed points of $\rho^+(d)$ and $\rho^+(b)$), $\rho_n^+(d^{s_n})(p)$ converges
to a point in $\partial\mathbb H^3$ disjoint from the fixed point set of $\rho^+(b)$. It follows that
$$d(\rho_n^+(bd^{s_n})(p),\rho_n^+(d^{s_n})(p))\to\infty.$$
Applying $\rho_n^+(d^{-s_n})$ to each term
we see that 
$$d(\rho_n^+(d^{-s_n}bd^{s_n})(p),p)\to\infty,$$
which contradicts the fact that \hbox{$\{\rho_n^-(b)=\rho_n^+(d^{-s_n}bd^{s_n})\}$} converges. 
Therefore,  a subsequence of $\{\rho_n(d^{s_n})\}$ converges. 
It follows that,  with the same subsequence,
\hbox{$\{\rho_n(b)=\rho_n(d^{-w(d)s_n})\rho_n^+(b)\rho_n(d^{w(d)s_n})\}$} and \hbox{$\{\rho_n(t)=\rho_n(ab)\}$} both converge.
(For a related argument see Anderson-Lecuire \cite[Claim 7.1]{eggshell}.) This completes the proof.
\end{proof}

We have exhibited a pants decomposition and a complete collection of
transversals all of whose images under $\rho_n$ have bounded
length. Therefore,  Thurston's Double Limit Theorem
\cite{thurston2,otal-double} again implies that $\{\rho_n\}$ has a convergent subsequence. 
\end{proof}

\section{Conclusion}
\label{conc}

We will now assemble the previous results to establish Theorems \ref{main theorem}, \ref{predictive power} and \ref{multi-curve version}. 
Let $S$ be a compact, orientable surface and let $\{\rho_n\}$ be a sequence in $AH(S)$ with end invariants $\{\nu_n^\pm\}$. 

\medskip\noindent
{\em Proof of Theorem \ref{main theorem}:}
If $\{\nu_n^\pm\}$  has a subsequence $\{\nu_j^\pm\}$ which bounds projections, 
then Proposition \ref{equivalence} implies that there exists  a further subsequence, still called $\{\rho_j\}$, 
and a sequence $\{c_j^\pm\}$ of pairs of multi-curves  such that $\{\ell_{\rho_j}(c_j^+\cup c_j^-)\}$ is  bounded and
$\{c_j^\pm\}$ bounds projections. Theorem \ref{sufficient} then implies that $\{\rho_j\}$, and hence $\{\rho_n\}$,
has a convergent subsequence. On the other hand, 
if $\{\rho_n\}$ has a convergent sequence, it follows immediately from Theorem \ref{necessity} that some subsequence of
$\{\nu_n^\pm\}$ bounds projections.
\qed

\medskip

Theorem \ref{predictive power} is precisely the second part of Theorem \ref{necessity}.

\medskip\noindent
{\em Proof of Theorem \ref{multi-curve version}:}
Theorem \ref{sufficient} implies that if there exists a sequence $\{c_n^\pm\}$ of pairs of multi-curves  such 
that $\{\ell_{\rho_n}(c_n^+\cup c_n^-)\}$ is  bounded and
$\{c_n^\pm\}$ bounds projections, then $\{\rho_n\}$ has a convergent subsequence.
On the other hand, if $\{\rho_n\}$ has a convergent subsequence $\{\rho_j\}$, then we may simply pick any filling pair $c^\pm$ of
multi-curves and set $c_j^\pm=c^\pm$ for all $j$. Then, since $\{\rho_j\}$ is convergent,
\hbox{$\{\ell_{\rho_j}(c_j^+\cup c_j^-)\}$} is  bounded and $\{c_j^\pm\}$ bounds projections
\qed

\end{document}